\newtheorem{theorem}{Theorem}[section]
\newtheorem{proposition}[theorem]{\bf Proposition}
\newtheorem{lemma}[theorem]{\bf Lemma}
\newtheorem{corollary}[theorem]{\bf Corollary}
\theoremstyle{remark}
\newtheorem{definition}[theorem]{\bf Definition}
\newtheorem{remark}[theorem]{\bf Remark}
\title{Rank two prolongations of second-order PDE and geometric singular solutions}
\author{Takahiro Noda 
and Kazuhiro Shibuya}
\address{Takahiro Noda\\
Graduate School of Mathematics\\ 
Nagoya University
Chikusa-ku \\Nagoya 464-8602 Japan\\
m04031x@math.nagoya-u.ac.jp}
\address{Kazuhiro Shibuya\\
Graduate School of Science\\ 
Hiroshima University \\
Higashi-Hiroshima\\ 739-8521, 
Japan\\
shibuya@hiroshima-u.ac.jp
}
\begin{document}

\maketitle

\begin{abstract} 
In this present paper, we study geometric structures 
of rank two prolongations of implicit second-order 
partial differential equations (PDEs) for two independent and
one dependent variables and characterize the type of these PDEs by 
the topology of fibers of the rank two prolongations. 
Moreover, by using properties of these 
prolongations, we give explicit expressions of 
geometric singular solutions of second-order PDEs 
from the point of view of contact geometry of second order. \\

\noindent
{\bf 2010 Mathematics Subject Classification}: Primary 58A15; Secondary 58A17\\
{\bf Keywords}: implicit second order PDEs, differential systems, rank two prolongations, geometric singular solutions

\end{abstract}
\section{Introduction}
Let us start by recalling the geometric construction of the 
2-jet bundle for two independent and one dependent variables, following \cite{Y1} and \cite{Y2}. 

First, let $M$ be a manifold of dimension $3$. We consider the space of 
$2$-dimensional contact elements to $M$, i.e., 
the Grassmann bundle $J(M,2)$ over 
$M$ consisting of $2$-dimensional subspaces of tangent spaces to $M$, namely, 
$J(M,2)$ is defined by
$$
J(M,2) = \bigcup_{x\in M} J_x,\qquad J_x = \text{Gr}(T_x(M),2),
$$
where $\text{Gr}(T_x(M),2)$ denotes the Grassmann manifold of $2$-dimensional subspaces 
in $T_x(M)$. Let $\pi: J(M,2)\to M$ be the bundle projection. 
The canonical system 
$C$ on $J(M,2)$ is, by definition, the differential system of codimension $1$ 
on $J(M,2)$ defined by
$$
C(u) = \pi_*^{-1}(u) = \{v\in T_u(J(M,2))\ |\ \pi_*(v)\in u \}\subset T_u(J(M,2)) 
\overset{\pi_{*}}\longrightarrow T_x(M),
$$
where $\pi(u) = x$ for $u\in J(M,2)$. 
The differential system $(J(M,2),C)$ is the (geometric) $1$-jet space, also called contact manifold of dimension $5$. 
In general, by a differential system $(R,D)$,  
we mean a distribution $D$ on a manifold $R$, that is,  
$D$ is a subbundle of the tangent bundle $TR$ of $R$. 

Next, we should start from a contact manifold $(J,C)$ of dimension $5$, 
which is locally a space of $1$-jet 
for two independent and one dependent variables. 
Then we can construct the geometric second-order jet space $(L(J),E)$ as follows: We consider the Lagrange-Grassmann bundle $L(J)$ over $J$ consisting of all 
$2$-dimensional integral elements of $(J,C)$, namely, 
$$
L(J) = \bigcup_{u\in J} L_u \subset J(J,2),
$$
where $L_u$ is the Grassmann manifold of all Lagrangian (or Legendrian) subspaces of the symplectic vector space $(C(u),d\varpi)$ for any $u \in J$. 
Here $\varpi$ is a local contact form on $J$. 
Namely, $v\in J(J,2)$ is an integral element if and only if $v\subset C(u)$ 
and $d\varpi|_v = 0$, where $u= \pi(v)$. Then the canonical system $E$ on $L(J)$ is defined by
$$
E(v) = \pi_*^{-1}(v) \subset T_v(L(J))\overset{\pi_{*}}\longrightarrow T_u(J),
$$
where $\pi(v) = u$ for $v\in L(J)$ and $\pi: L(J)\to J$ is the projection.
The geometric jet space of second order $(L(J),E)$ is locally a space of 2-jets for two independent and one dependent variables $(J^{2}(\mathbb R^2, \mathbb R), C^2)$. Here, the 2-jet space $(J^{2}(\mathbb R^2, \mathbb R), C^2)$ is defined as follows:
\begin{equation}
J^{2}(\mathbb R^2, \mathbb R):=\left\{(x,y,z,p,q,r,s,t)\right\}
\end{equation}
and 
$C^2:=\left\{\varpi_{0}=\varpi_{1}=\varpi_{2}=0\right\}$ is given by the 
following 1-forms:
\begin{align*}
\varpi_{0}:=dz-pdx-qdy,\quad
\varpi_{1}:=dp-rdx-sdy,\quad 
\varpi_{2}:=dq-sdx-tdy.
\end{align*}
In this paper, we identify $(L(J),E)$ with $(J^{2}(\mathbb R^2, \mathbb R), C^2)$ since we only consider the local geometry of jet spaces.

Now we consider single PDEs  
$F(x,y,z,p,q,r,s,t)=0$, 
where $F$ is a smooth function on 
$J^{2}(\mathbb R^2, \mathbb R)$. 
We set 
$R=\left\{F=0 \right\} \subset J^2(\mathbb R^2, \mathbb R),$ 
and restrict the canonical differential system $C^2$ to $R$. We denote 
it by $D(:=C^2 |_R)$. 
We consider a PDE $R=\{F=0\}$ 
with the condition 
$(F_r,F_s,F_t)\not=(0,0,0)$ 
which we will call the {\it regularity condition}. 
Thus, $R$ is a smooth hypersurface, and also the restriction
$\pi^{2}_{1}|_{R}:R \to J^1(\mathbb R^2, \mathbb R)$  of the natural  
projection  $\pi^{2}_{1}:J^2(\mathbb R^2, \mathbb R) \to J^1(\mathbb R^2, \mathbb R)$ 
is a submersion. 
Due to the regularity condition, restricted 1-forms $\varpi_{i}|_{R}$ ($i=0,1,2$) on $R$ 
are linearly independent. Therefore, we have the induced differential system 
$D=\left\{\varpi_{0}|_{R}=\varpi_{1}|_{R}=\varpi_{2}|_{R}=0\right\}$
on $R$. 
Then, $D$ is a vector bundle of rank 4 on $R$. 
For brevity, we denote each restricted generator 1-form $\varpi_{i}|_{R}$ 
of $D$ by $\varpi_{i}$ in the following. 
For such an equation $F=0$, we consider the discriminant 
$\Delta:=F_rF_t-{F_s}^2/4$.  
\begin{definition}
Let $R=\left\{F=0\right\}$ be a single second-order regular PDE. For the discriminant $\Delta$ of $F$,   
a point $w\in R$ is said to be {\it hyperbolic} or {\it elliptic} if 
$\Delta(w)<0$ or $\Delta(w)>0$, respectively. Moreover, a point 
$w\in R$ is said to be {\it parabolic} if $(F_r,F_s,F_t)_{w}\not=(0,0,0)$ and 
$\Delta(w)=0$. 
\end{definition}
For second-order regular PDEs, we are interested in geometric singular solutions. Here, the 
notion of geometric solutions, including singular solutions is 
defined as follows (see \cite{NS}).  
\begin{definition}\label{solution}
Let $(R, D)$ be a second-order regular PDE. 
For a 2-dimensional integral manifold $S$ of $R$,  
if the restriction $\pi^{2}_{1}|_{R}: R\to J^1$ of the natural projection $\pi^{2}_{1}:J^2\to J^1$ is an immersion 
on an open dense subset in $S$,  
then we call $S$  a {\it geometric solution} of $(R, D)$. If all points of 
a geometric solution $S$ are immersion points, 
then we call $S$ a {\it regular solution}. 
On the other hand, a geometric solution $S$ have a 
nonimmersion point, then we call $S$ a {\it singular solution}.  
\end{definition}
From the definition, images $\pi^{2}_{1}(S)$ of geometric solutions $S$ by the 
projection $\pi^{2}_{1}$ are Legendrian in $J^1(\mathbb R^2,\mathbb R)$, i.e., 
$\varpi_{0}|_{\pi^{2}_{1}(S)}=d\varpi_{0}|_{\pi^{2}_{1}(S)}=0$. 
We will investigate the method of the construction 
of these singular solutions. For this purpose, 
we define the notion of rank $n$ prolongations of differential systems, in general, as follows:
\begin{definition}\label{rank 2-prolongation}
Let $(R,D)$ be a differential system given by 
$D=\{\varpi_1 =\cdots =\varpi_s =0 \}$. 
An $n$-dimensional {\it integral element} of $D$ at $x\in R$ is an 
$n$-dimensional subspace $v$ of $T_x R$ such that 
\[
\varpi_i|_{v} = d\varpi_i|_{v}=0\qquad (i=1,\cdots ,s).
\]
Namely, $n$-dimensional integral elements are candidates for 
the tangent spaces at $x$ to $n$-dimensional integral manifolds of $D$.
It follows that the {\it rank $n$ prolongation 
of $(R,D)$} is defined by 
\begin{equation}\label{Carpro}
\Sigma(R):=\bigcup_{x\in R}\Sigma_{x}, 
\end{equation}
where 
$\Sigma_x=\left\{v \subset T_{x}R\ |\ v\ {\rm is\ an}\ n {\rm \textit{-}dimensional\ integral\ element\ of\ } (R,D)\ \right\}$. 
We define the canonical system 
$\hat D$ on $\Sigma(R)$ by 
\begin{align}\label{canonical system}
\hat D(u):&={p_{*}^{-1}}(u) 
          =\left\{v\in T_{u}(\Sigma(R))\ |\ p_{*}(v)\in u\right\}, \nonumber
\end{align}
where $u\in \Sigma(R)$ is a smooth point of $\Sigma(R)$ and $p:\Sigma(R)\to R$ is the projection.
\end{definition}
This space $\Sigma(R)$ is a subset of the Grassmann bundle over $R$
\begin{equation}\label{Grassmann}
J(D, n):=\bigcup_{x\in R} J_x
\end{equation}
where 
$J_x:=\left\{v \subset T_{x}R\ |\ v\ {\rm is\ an}\ n{\rm \textit{-}dimensional\ subspace\ of}\ D(x)\right\}.$  
In general, the rank $n$ prolongations $\Sigma(R)$ 
have singular points, that is, 
$\Sigma(R)$ is not a smooth manifold. 
This kind of prolongation is very useful to 
study geometric structures of equations $(R,D)$ or their solutions. 
In this paper, we only consider in the case of $n=2$. \par
Let us now proceed to the description of the various sections and explain the main results in the present paper. 
In section 2, we investigate the fiber topology of rank 2 prolongations 
$(\Sigma(R),\hat{D})$ of differential systems 
$(R,D)$ induced by hyperbolic, parabolic and elliptic equations.
One of the main results of this paper is that the type of equations defined by 
local structure is characterized by the topology of fibers of 
the prolongation $p : \Sigma(R) \to R$. Namely, 
we obtain that the topology of fibers of 
the prolongations $p : \Sigma(R) \to R$ of differential systems 
$(R,D)$ associated with hyperbolic, parabolic or elliptic equations 
is torus, pinched torus or sphere, respectively 
(Corollary \ref{all-topology}). 
In section 3, we study structures of the canonical systems $\hat{D}$ 
on the rank 2 prolongations $\Sigma(R)$ for hyperbolic, parabolic and 
elliptic equations $(R,D)$ as differential systems. More precisely, obtained results in this 
section clarify the structure of 
nilpotent graded Lie algebras (symbol algebras) of the canonical systems  
on the rank 2 prolongations for hyperbolic, parabolic and 
elliptic equations. Here, the symbol algebra is 
a fundamental invariant of differential systems under 
contact transformations (see section 3.2). 
In section 4, we research an approach to 
construct geometric singular solutions of  
hyperbolic, parabolic, elliptic equations defined by 
Definition \ref{solution}. Especially, we give the explicit integral representation of these singular solutions of model equations for each class of single equations.   
In section 5, we introduce hyperbolic, parabolic and elliptic rank 4 distributions which are generalizations of hyperbolic, parabolic and elliptic PDEs
and prove the topology of fibers of the prolongation of these rank 4 distributions is torus, pinched torus or sphere, 
respectively (Proposition \ref{tower}).
This result is a generalization of a part of Theorem 18 in \cite{KL}. We also prove that the procedure of prolongations of these distributions preserves their types, namely, the rank $2$ prolongation of hyperbolic, parabolic or elliptic rank 4 distributions
is also a rank 4 distribution of the type of hyperbolic, parabolic or elliptic, respectively (Theorem \ref{towerthm}). 
It follows that, by successive prolongations of these rank 4 distributions, 
we can define the notion of $k$-th rank $2$ prolongations 
as a generalization of 
$k$-th rank $1$ prolongations introduced 
previously in \cite{MZ}  or \cite{SY} (these are called \lq\lq Monster Goursat manifolds" in \cite{MZ}).\par
{\bf Acknowledgments.}  
We would like to express our special thanks to 
Professor Keizo Yamaguchi for many useful discussions on the subject.
Moreover, the first author is supported by Osaka 
City University Advanced Mathematical Institute. 
\section{Rank 2 prolongations of regular PDEs} 
In this section, we show that the type of equations 
is characterized by the topology of fibers of the rank 2 prolongations
of equations. 
For this purpose, we provide the rank 2 prolongations of hyperbolic, parabolic and elliptic PDEs by using inhomogeneous Grassmann coordinates. \par
\medskip\noindent     
{\bf Rank 2 prolongations of hyperbolic equations.}
Let $(R,D)$ be a locally hyperbolic equation. Then, 
there exists a local coframe 
$\left\{\varpi_0, \varpi_1, \varpi_2, \omega_1, \omega_2, \pi_{11}, \pi_{22}\right\}$  around $x\in R$ such that 
$D=\{\varpi_0=\varpi_1=\varpi_2=0 \}$ 
and the following structure equation holds:
\begin{align}\label{hypequ}
d\varpi_{0}&\equiv \omega_{1}\wedge\varpi_{1}+\omega_{2}\wedge\varpi_{2}
\quad \mod \ \varpi_{0}, \nonumber \\
d\varpi_{1}&\equiv \omega_{1}\wedge \pi_{11} \quad \mod\ \varpi_{0}, \varpi_{1}, \varpi_{2},\\
d\varpi_{2}&\equiv \omega_{2}\wedge \pi_{22} \quad \mod\ \varpi_{0}, 
\varpi_{1}, \varpi_{2}. \nonumber
\end{align}
In terms of this structure equation, we construct the rank 2 prolongation of $(R,D)$ 
by taking integral elements. 
\begin{theorem}\label{hyp-pro} 
Let $(R,D)$ be a locally hyperbolic equation. Then, the 
rank $2$ prolongation $\Sigma(R)$ is a smooth submanifold of $J(D,2)$, 
and it is a $T^2=S^1\times S^1$-bundle over $R$.  
\end{theorem}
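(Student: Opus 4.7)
The plan is to determine each fiber $\Sigma_x$ explicitly from the structure equations \eqref{hypequ}. Since $v$ is a 2-plane inside $D(x)$, the conditions $\varpi_i|_v=0$ hold automatically for $i=0,1,2$, and the first line of \eqref{hypequ} forces $d\varpi_0|_v\equiv 0$ as well. Hence $v$ is an integral element of $D$ at $x$ if and only if
\[
(\omega_1\wedge\pi_{11})|_v=0 \qquad\text{and}\qquad (\omega_2\wedge\pi_{22})|_v=0.
\]

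For the local picture, I would introduce inhomogeneous Grassmann coordinates on the principal chart of $J(D,2)$ in which $\omega_1|_v,\omega_2|_v$ are linearly independent. A typical $v$ in this chart is spanned by
\[
X_1=\partial_{\omega_1}+a_{11}\partial_{\pi_{11}}+a_{12}\partial_{\pi_{22}}, \qquad X_2=\partial_{\omega_2}+a_{21}\partial_{\pi_{11}}+a_{22}\partial_{\pi_{22}},
\]
where $\{\partial_{\omega_i},\partial_{\pi_{jj}}\}$ denotes the basis of $D(x)$ dual to the coframe $\{\omega_i,\pi_{jj}\}$. A direct computation yields $(\omega_1\wedge\pi_{11})(X_1,X_2)=a_{21}$ and $(\omega_2\wedge\pi_{22})(X_1,X_2)=-a_{12}$, so the integral elements in this chart form a smooth $\mathbb{R}^2$-patch parametrized by $(a_{11},a_{22})$.

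To globalize this and pin down the fiber topology, I would recast the condition intrinsically via the characteristic rank-$2$ subbundles
\[
L_1:=\ker\bigl(d\varpi_1|_D\bigr)=\{\omega_1=\pi_{11}=0\}\cap D, \qquad L_2:=\ker\bigl(d\varpi_2|_D\bigr)=\{\omega_2=\pi_{22}=0\}\cap D,
\]
which are complementary in $D$, so that $D=L_1\oplus L_2$. Using the standard fact that a 2-plane $v\subset\mathbb{R}^4$ is isotropic for a rank-$2$ two-form $\alpha$ if and only if $\dim(v\cap\ker\alpha)\ge 1$, the integrality conditions become $\dim(v\cap L_i)\ge 1$ for $i=1,2$. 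The extremal cases $v=L_i$ are ruled out because $L_1\cap L_2=0$ would then force $\dim(v\cap L_{3-i})=0$. Hence $\dim(v\cap L_i)=1$ for both $i$, and $v=(v\cap L_1)\oplus(v\cap L_2)$.

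This sets up a bijection $\Sigma_x\longleftrightarrow\mathbb{P}(L_1)\times\mathbb{P}(L_2)\cong\mathbb{RP}^1\times\mathbb{RP}^1=S^1\times S^1$, which is manifestly smooth and varies smoothly with $x$ since $L_1,L_2$ are smooth subbundles of $D$. Consequently $\Sigma(R)\to R$ is a smooth $T^2$-bundle and $\Sigma(R)$ is realized as a smooth submanifold of $J(D,2)$. The main obstacle I anticipate is the clean intrinsic identification of the characteristic kernels $L_1,L_2$ and the short case analysis excluding $v=L_i$; once these are in place, the torus topology comes for free from $\mathbb{RP}^1\times\mathbb{RP}^1$.
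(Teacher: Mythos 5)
Your proposal is correct, but it follows a genuinely different route from the paper. The paper proves the theorem by brute force on the Grassmann bundle: it covers $\pi^{-1}(U)\subset J(D,2)$ by the six charts $U_{\omega_{1}\omega_{2}},\dots,U_{\pi_{11}\pi_{22}}$, writes the integrality conditions $d\varpi_{1}|_{w}\equiv d\varpi_{2}|_{w}\equiv 0$ in inhomogeneous coordinates in each chart, finds that two charts contain no integral elements while in the remaining four the defining functions have independent differentials (smoothness), and then computes transition functions such as $\phi(v,p_{11}^{1},p_{22}^{1})=(v,1/p_{22}^{1},p_{11}^{1})$ to glue the pieces $P_{\omega_{1}\omega_{2}},P_{\omega_{1}\pi_{22}},P_{\omega_{2}\pi_{11}},P_{\pi_{11}\pi_{22}}$ into a $T^{2}$ fiber. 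You instead work intrinsically: since $\varpi_{i}$ and hence $d\varpi_{0}$ vanish automatically on any $2$-plane in $D$, integrality reduces to isotropy for the two rank-$2$ forms $\omega_{1}\wedge\pi_{11}$ and $\omega_{2}\wedge\pi_{22}$ on $D$, and via their kernels $L_{1},L_{2}$ (the Monge characteristic subbundles, with $D=L_{1}\oplus L_{2}$) every integral element splits uniquely as a line in $L_{1}$ plus a line in $L_{2}$, giving $\Sigma_{x}\cong\mathbb{P}(L_{1})\times\mathbb{P}(L_{2})\cong T^{2}$ directly; your kernel criterion, the exclusion of $v=L_{i}$, and the single-chart computation are all correct, and the four products of affine charts on $\mathbb{P}(L_{1})\times\mathbb{P}(L_{2})$ reproduce exactly the paper's four nonempty charts. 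This is essentially the Bryant--Griffiths--Hsu viewpoint that the authors acknowledge in their remark after the theorem; it buys a cleaner, computation-free identification of the fiber topology, whereas the paper's chart-by-chart construction buys the explicit covering and coordinates that are reused later for the symbol-algebra computations and the construction of singular solutions. The only point you pass over quickly is the final smoothness claim: to conclude that $\Sigma(R)$ is an embedded submanifold of $J(D,2)$ at points outside your chart, you should either repeat the linear computation in the remaining affine charts of $\mathbb{P}(L_{1})\times\mathbb{P}(L_{2})$ or note that the fiberwise-injective smooth map $\mathbb{P}(L_{1})\times_{R}\mathbb{P}(L_{2})\to J(D,2)$, $(\ell_{1},\ell_{2})\mapsto\ell_{1}\oplus\ell_{2}$, is an immersion with compact fibers over $R$ and hence an embedding; this is routine but is exactly the content of the paper's verification in its charts (III), (IV), (VI).
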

\begin{proof}
First, we show that $\Sigma(R)$ is a submanifold of $J(D,2)$.  
Let $\pi:J(D,2)\to R$ be the projection and $U$ an open set in $R$. 
Then $\pi^{-1}(U)$ is covered by $6$ open sets in $J(D,2)$: 
\begin{equation}\label{Grassmann-cover}
\pi^{-1}(U)=U_{\omega_{1}\omega_{2}}\cup U_{\omega_{1}\pi_{11}}
\cup U_{\omega_{1}\pi_{22}}\cup U_{\omega_{2}\pi_{11}}
\cup U_{\omega_{2}\pi_{22}}\cup U_{\pi_{11}\pi_{22}}, 
\end{equation}
where 
\begin{align*}
&U_{\omega_{1}\omega_{2}}:=\left\{v \in \pi^{-1}(U) \ |\ \omega_{1}|_{v}\wedge \omega_{2}|_{v} \not=0\right\},\ 
U_{\omega_{1}\pi_{11}}:=\left\{v \in \pi^{-1}(U) \ |\ \omega_{1}|_{v}\wedge \pi_{11}|_{v} \not=0\right\},\\
&U_{\omega_{1}\pi_{22}}:=\left\{v \in \pi^{-1}(U) \ |\ \omega_{1}|_{v}\wedge \pi_{22}|_{v} \not=0\right\},\  
U_{\omega_{2}\pi_{11}}:=\left\{v \in \pi^{-1}(U) \ |\ \omega_{2}|_{v}\wedge \pi_{11}|_{v} \not=0\right\},\\
&U_{\omega_{2}\pi_{22}}:=\left\{v \in \pi^{-1}(U) \ |\ \omega_{2}|_{v}\wedge \pi_{22}|_{v} \not=0\right\},\  
U_{\pi_{11}\pi_{22}}:=\left\{v \in \pi^{-1}(U) \ |\ \pi_{11}|_{v}\wedge \pi_{22}|_{v} \not=0\right\}. 
\end{align*}
In the following, we explicitly describe the defining 
equation of $\Sigma(R)$ in terms of the inhomogeneous Grassmann coordinate of fibers  
in $U_{\omega_{1}\omega_{2}},...., U_{\pi_{11}\pi_{22}}$.\\
(I) On $U_{\omega_{1}\omega_{2}}$:\par
For $w\in U_{\omega_{1}\omega_{2}}$, 
$w$ is a $2$-dimensional subspace of $D(v)$, 
$p(w)=v$. Hence, by restricting $\pi_{11},\pi_{22}$ to $w$, we can introduce the 
inhomogeneous coordinate $p^{1}_{ij}$ of fibers of $J(D,2)$ around $w$ 
with  
$\pi_{11}|_{w}={p_{11}^{1}}(w)\omega_{1}|_{w}+{p_{12}^{1}}(w)\omega_{2}|_{w},\   
\pi_{22}|_{w}={p_{21}^{1}}(w)\omega_{1}|_{w}+{p_{22}^{1}}(w)\omega_{2}|_{w}.$
Moreover, $w$ satisfies 
$d\varpi_{1}|_{w}\equiv d\varpi_{2}|_{w}\equiv 0$: 
\begin{align*}
d\varpi_{1}|_{w}&\equiv \omega_{1}|_{w}\wedge 
                   ({p_{11}^{1}}(w)\omega_{1}|_{w}+{p_{12}^{1}}(w)\omega_{2}|_{w})
                \equiv {p_{12}^{1}}(w)\omega_{1}|_{w}\wedge\omega_{2}|_{w},\\
d\varpi_{2}|_{w}&\equiv \omega_{2}|_{w}\wedge 
                   ({p_{21}^{1}}(w)\omega_{1}|_{w}+{p_{22}^{1}}(w)\omega_{2}|_{w})
                \equiv -{p_{21}^{1}}(w)\omega_{1}|_{w}\wedge\omega_{2}|_{w}. 
\end{align*} 
Hence, we obtain the defining equations $f_1=f_2=0$ of $\Sigma(R)$ in  $U_{\omega_{1}\omega_{2}}$ of $J(D,2)$, where 
$f_1=p_{12}^{1},\ f_2=p_{21}^{1}$, that is,  
$\left\{f_1=f_2=0 \right\}\subset U_{\omega_{1}\omega_{2}}.$
Then $df_1, df_2$ are independent on $\left\{f_1=f_2=0\right\}$.\\ 
(II) On $U_{\omega_{1}\pi_{11}}$:\par
For $w\in U_{\omega_{1}\pi_{11}}$, 
by restricting $\omega_2, \pi_{22}$ to $w$, we 
introduce the inhomogeneous coordinate $p^{2}_{ij}$ 
of fibers of $J(D,2)$ around $w$ with  
$\omega_{2}|_{w}={p_{11}^{2}}(w)\omega_{1}|_{w}+{p_{12}^{2}}(w)\pi_{11}|_{w},\quad 
\pi_{22}|_{w}={p_{21}^{2}}(w)\omega_{1}|_{w}+{p_{22}^{2}}(w)\pi_{11}|_{w}.$ 
Moreover, $w$ satisfies $d\varpi_{1}|_{w}\equiv d\varpi_{2}|_{w}\equiv 0$.  
However, we have 
$d\varpi_{1}|_{w}\equiv \omega_{1}|_{w}\wedge \pi_{11}|_{w}\not \equiv 0.$ 
Thus, there does not exist integral element, that is, 
$U_{\omega_{1}\pi_{11}}\cap p^{-1}(U)=\emptyset$. \\
(III) On $U_{\omega_{1}\pi_{22}}$:\par 
For $w\in U_{\omega_{1}\pi_{22}}$, 
by restricting $\omega_{2}, \pi_{11}$ to $w$, we 
introduce the inhomogeneous coordinate $p^{3}_{ij}$ 
of fibers of $J(D,2)$ around $w$ with
$\omega_{2}|_{w}={p_{11}^{3}}(w)\omega_{1}|_{w}+{p_{12}^{3}}(w)\pi_{22}|_{w},\quad 
\pi_{11}|_{w}={p_{21}^{3}}(w)\omega_{1}|_{w}+{p_{22}^{3}}(w)\pi_{22}|_{w}.$  
Moreover, $w$ satisfies $d\varpi_{1}|_{w}\equiv d\varpi_{2}|_{w}\equiv 0$: 
\begin{align*}
d\varpi_{1}|_{w}&\equiv \omega_{1}|_{w}\wedge \pi_{11}|_{w} 
                 \equiv {p_{22}^{3}}(w)\omega_{1}|_{w}\wedge \pi_{22}|_{w},\\
d\varpi_{2}|_{w}&\equiv \omega_{2}|_{w} \wedge \pi_{22}|_{w}
                \equiv {p_{11}^{3}}(w) \omega_{1}|_{w} \wedge \pi_{22}|_{w}.                
\end{align*} 
Then the defining functions of $\Sigma(R)$ are independent in the same 
as (I).\\  
(IV) On $U_{\omega_{2}\pi_{11}}$:\par
For $w\in U_{\omega_{2}\pi_{11}}$, 
by restricting $\omega_{1}, \pi_{22}$ to $w$, we 
introduce the inhomogeneous coordinate $p^{4}_{ij}$ 
of fibers of $J(D,2)$ around $w$ with 
$\omega_{1}|_{w}={p_{11}^{4}}(w)\omega_{2}|_{w}+{p_{12}^{4}}(w)\pi_{11}|_{w},\  
\pi_{22}|_{w}={p_{21}^{4}}(w)\omega_{2}|_{w}+{p_{22}^{4}}(w)\pi_{11}|_{w}.$  
Moreover, $w$ satisfies $d\varpi_{1}|_{w}\equiv d\varpi_{2}|_{w}\equiv 0$: 
\begin{align*}
d\varpi_{1}|_{w}&\equiv \omega_{1}|_{w} \wedge \pi_{11}|_{w} 
                 \equiv {p_{11}^{4}}(w) \omega_{2}|_{w} \wedge \pi_{11}|_{w},\\
d\varpi_{2}|_{w}&\equiv \omega_{2}|_{w} \wedge \pi_{22}|_{w}
                \equiv {p_{22}^{4}}(w)\omega_{2}|_{w} \wedge \pi_{11}|_{w}. 
\end{align*}   
Then the defining functions of $\Sigma(R)$ are independent in the same 
way as in (I).\\
(V) On $U_{\omega_{2}\pi_{22}}$:\par 
For $w\in U_{\omega_{2}\pi_{22}}$, 
by restricting $\omega_{1}, \pi_{11}$ to $w$, we 
introduce the inhomogeneous coordinate $p^{5}_{ij}$ 
of fibers of $J(D,2)$ around $w$ with
$\omega_{1}|_{w}={p_{11}^{5}}(w)\omega_{2}|_{w}+{p_{12}^{5}}(w)\pi_{22}|_{w},\quad 
\pi_{11}|_{w}={p_{21}^{5}}(w)\omega_{2}|_{w}+{p_{22}^{5}}(w)\pi_{22}|_{w}.$  
Moreover, $w$ satisfies $d\varpi_{1}|_{w}\equiv d\varpi_{2}|_{w}\equiv 0$. 
However, we have 
$d\varpi_{2}|_{w}\equiv \omega_{2}|_{w}\wedge \pi_{22}|_{w}\not\equiv 0.$  
Thus, there does not exist integral element, that is,  
$U_{\omega_{2}\pi_{22}}\cap p^{-1}(U)=\emptyset$.\\
(VI) On $U_{\pi_{11}\pi_{22}}$:\\
For $w\in U_{\pi_{11}\pi_{22}}$, 
by restricting $\omega_{1}, \omega_{2}$ to $w$, we 
introduce the inhomogeneous coordinate $p^{6}_{ij}$ 
of fibers of $J(D,2)$ around $w$ with
$\omega_{1}|_{w}={p_{11}^{6}}(w)\pi_{11}|_{w}+{p_{12}^{6}}(w)\pi_{22}|_{w},\  
\omega_{2}|_{w}={p_{21}^{6}}(w)\pi_{11}|_{w}+{p_{22}^{6}}(w)\pi_{22}|_{w}.$  
Moreover, $w$ satisfies $d\varpi_{1}|_{w}\equiv d\varpi_{2}|_{w}\equiv 0$: 
\begin{align*}
d\varpi_{1}|_{w}&\equiv \omega_{1}|_{w} \wedge \pi_{11}|_{w}
                 \equiv {p_{12}^{6}}(w) \pi_{22}|_{w}\wedge \pi_{11}|_{w},\\
d\varpi_{2}|_{w}&\equiv \omega_{2}|_{w}\wedge \pi_{22}|_{w}
                \equiv {p_{21}^{6}}(w)\pi_{11}|_{w}\wedge \pi_{22}|_{w}. 
\end{align*} 
Then the defining functions of $\Sigma(R)$ are independent 
in the same way as in (I).\par 
Under these discussions, 
the rank 2 prolongation $\Sigma(R)$ is a smooth submanifold of $J(D,2)$.\par   
Next, we show that the topology of fibers of $\Sigma(R)$ is torus. 
In the above discussion, we have 
$p^{-1}(U)=P_{\omega_{1}\omega_{2}}\cup P_{\omega_{1}\pi_{22}}\cup 
          P_{\omega_{2}\pi_{11}}\cup P_{\pi_{11}\pi_{22}}$,  
where 
$P_{\omega_{1}\omega_{2}}:=p^{-1}(U) \cap U_{\omega_{1}\omega_{2}},\   
P_{\omega_{1}\pi_{22}}:=p^{-1}(U) \cap U_{\omega_{1}\pi_{22}},\  
P_{\omega_{2}\pi_{11}}:=p^{-1}(U) \cap U_{\omega_{2}\pi_{11}},$ and  
 $P_{\pi_{11}\pi_{22}}:=p^{-1}(U) \cap U_{\pi_{11}\pi_{22}}$.
From Definition \ref{rank 2-prolongation}, we have the canonical system 
$\hat D$ on each open set. To prove our assertion, we investigate  
the gluing of $(\Sigma(R),\hat D)$. 
For instance, we construct the transition functions on 
$U_{\omega_{1}\omega_{2}}\cap U_{\omega_{1}\pi_{22}}$ in the following. 
On $U_{\omega_{1}\omega_{2}}$, the canonical system 
$\hat D=\left\{\varpi_{0}=\varpi_{1}=\varpi_{2}
=\varpi_{\pi_{11}}=\varpi_{\pi_{22}}=0\right\}$ 
is given by 
$\varpi_{\pi_{11}}:=\pi_{11}-{p_{11}^{1}}\omega_{1},\quad  
\varpi_{\pi_{22}}:=\pi_{22}-{p_{22}^{1}}\omega_{2}.$ 
On the other hand, the canonical system 
$\hat D=\left\{\varpi_{0}=\varpi_{1}=\varpi_{2}=
\varpi_{\omega_{2}}=\varpi_{\pi_{11}}=0\right\}$ 
on $U_{\omega_{1}\pi_{22}}$ is given by 
$\varpi_{\omega_{2}}:=\omega_{2}-{p_{12}^{3}}\pi_{22},\quad  
\varpi_{\pi_{11}}:=\pi_{11}-{p_{21}^{3}}\omega_{1}.$ 
Then, the transition functions  
$\phi: U_{\omega_{1}\omega_{2}}\cap U_{\omega_{1}\pi_{22}}\to U_{\omega_{1}\omega_{2}}\cap U_{\omega_{1}\pi_{22}}$ 
is given by 
\begin{equation*}
\phi(v,{p_{11}^{1}},{p_{22}^{1}})=\left(v,{p_{12}^{3}}:=\frac{1}{{p_{22}^{1}}},{p_{21}^{3}}:={p_{11}^{1}}\right)
\ {\rm for}\ {p_{22}^{1}}\not=0, 
\end{equation*}
where $v$ is a local coordinate on $R$. 
We also have similar transition functions for the  
other intersection open sets $U_{\omega_{1}\omega_{2}}\cap U_{\omega_{2}\pi_{11}}$, 
$U_{\omega_{1}\pi_{22}}\cap U_{\pi_{11}\pi_{22}}$, $U_{\omega_{2}\pi_{11}}\cap U_{\pi_{11}\pi_{22}}$. 
Consequently, the topological structure of fibers is $T^2=S^1\times S^1$.  
\end{proof}
\begin{remark}
In fact, this result (i.e. $\Sigma(R)$ is a torus bundle) is known by Bryant, 
Griffiths and Hsu in \cite{BGH}. They obtained this result for the hyperbolic 
exterior differential system which is a generalization of distributions corresponding to hyperbolic equations (see Remark \ref{EDS}).
However, we will also consider parabolic and elliptic cases and our method is distinct one. 
We will use the structure of this covering in $\Sigma(R)$ when we will study singular solutions (see, section 5). 
Thus, we need to prove in the above way. 
\end{remark}
\noindent
{\bf Rank 2 prolongations of parabolic equations.} 
Let $(R,D)$ be a locally parabolic equation. Then, 
there exists a local coframe 
$\left\{\varpi_0, \varpi_1, \varpi_2, \omega_1, \omega_2, \pi_{12}, \pi_{22}\right\}$  around $x\in R$ such that 
$D=\{\varpi_0=\varpi_1=\varpi_2=0 \}$ 
and the following structure equation holds: 
\begin{align}\label{parequ}
d\varpi_{0}&\equiv \omega_{1}\wedge\varpi_{1}+\omega_{2}\wedge\varpi_{2}
\quad \mod \ \varpi_{0}, \nonumber \\
d\varpi_{1}&\equiv \quad \quad \quad\ \quad \omega_{2}\wedge \pi_{12} \quad \mod\ \varpi_{0}, \varpi_{1}, \varpi_{2},\\
d\varpi_{2}&\equiv \omega_{1}\wedge \pi_{12}+\omega_{2}\wedge \pi_{22} \ \mod\ \varpi_{0}, 
\varpi_{1}, \varpi_{2}. \nonumber
\end{align}
From this structure equation, we clarify the rank 2 prolongation 
$\Sigma(R)$. 
\begin{lemma}\label{par-pro} 
Let $(R,D)$ be a locally parabolic equation. Then, the 
rank $2$ prolongation $\Sigma(R)$ has singular points.   
\end{lemma}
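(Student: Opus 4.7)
The plan is to mimic the chart-by-chart analysis of Theorem~\ref{hyp-pro}, but using the parabolic coframe $\{\varpi_0,\varpi_1,\varpi_2,\omega_1,\omega_2,\pi_{12},\pi_{22}\}$ with structure equations \eqref{parequ}, and to locate a chart in $J(D,2)$ where the two defining equations of $\Sigma(R)$ fail to be a submersion. It suffices to produce one such point, since the lemma only asserts existence of singularities.

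First, I would cover $\pi^{-1}(U) \subset J(D,2)$ by the six open sets $U_{\alpha\beta}$ on which $\alpha|_w \wedge \beta|_w \neq 0$, as $\{\alpha,\beta\}$ ranges over the unordered pairs from $\{\omega_1,\omega_2,\pi_{12},\pi_{22}\}$. On each chart I would expand the remaining two 1-forms as linear combinations of $\alpha|_w,\beta|_w$ to introduce inhomogeneous Grassmann coordinates, and then read off the two defining functions $f_1,f_2$ of $\Sigma(R)$ from $d\varpi_1|_w \equiv d\varpi_2|_w \equiv 0$. Exactly as in the hyperbolic case, $U_{\omega_2\pi_{12}} \cap \Sigma(R) = \emptyset$ because $d\varpi_1|_w \equiv \omega_2|_w \wedge \pi_{12}|_w \not\equiv 0$ there.

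The decisive chart is $U_{\omega_1\pi_{22}}$. Writing $\omega_2|_w = a\,\omega_1|_w + b\,\pi_{22}|_w$ and $\pi_{12}|_w = c\,\omega_1|_w + d\,\pi_{22}|_w$, a direct computation from \eqref{parequ} yields
\[
f_1 = ad - bc = 0, \qquad f_2 = a + d = 0,
\]
whose Jacobian matrix is
\[
\begin{pmatrix} d & -c & -b & a \\ 1 & 0 & 0 & 1 \end{pmatrix}.
\]
At the fiber point $a=b=c=d=0$, both equations are satisfied, the first row vanishes, and the Jacobian drops to rank $1$. Hence $\Sigma(R)$ fails to be a smooth submanifold of $J(D,2)$ at this point, proving the lemma. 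Geometrically, this singular integral element is the unique 2-plane in $D(v)$ annihilated simultaneously by $\omega_2$ and $\pi_{12}$, i.e., the \emph{parabolic characteristic direction} of the equation.

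The main obstacle is spotting that the quadratic defining function $ad-bc$ appears precisely in $U_{\omega_1\pi_{22}}$, caused by the asymmetry of \eqref{parequ} (one wedge term in $d\varpi_1$, two in $d\varpi_2$). This Hessian-type degeneracy at the origin has no counterpart in the hyperbolic case, where the defining equations on every chart were linear. Verifying that the remaining four charts contribute no further singularities is a routine rank computation analogous to the hyperbolic proof, but is not required for the statement.
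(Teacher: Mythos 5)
Your proposal is correct and follows essentially the same route as the paper: the same six-chart Grassmann-coordinate analysis, the same decisive chart $U_{\omega_{1}\pi_{22}}$ with defining functions $f_{1}=ad-bc$, $f_{2}=a+d$, and the same singular fiber point $a=b=c=d=0$ where $df_{1},df_{2}$ become dependent (the paper's $p^{3}_{ij}$ are your $a,b,c,d$). The only cosmetic difference is that you stop after exhibiting one singular point rather than tabulating the remaining charts, which indeed suffices for the statement since singularity is local and $U_{\omega_{1}\pi_{22}}$ is open in $J(D,2)$.
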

\begin{proof}
Let $U$ be an open set in $R$, and $\pi:J(D,2)\to R$ the 
projection. 
Then $\pi^{-1}(U)$ is covered by $6$ open sets in $J(D,2)$: 
\begin{equation}
\pi^{-1}(U)=U_{\omega_{1}\omega_{2}}\cup U_{\omega_{1}\pi_{12}}
\cup U_{\omega_{1}\pi_{22}}\cup U_{\omega_{2}\pi_{12}}
\cup U_{\omega_{2}\pi_{22}}\cup U_{\pi_{12}\pi_{22}}, 
\end{equation}
where each open set is given in the same way as the 
hyperbolic case (\ref{Grassmann-cover}).  
Now we explicitly describe the defining 
equation of $\Sigma(R)$ on each open set.\\  
(I) On $U_{\omega_{1}\omega_{2}}$:\par
For $w\in U_{\omega_{1}\omega_{2}}$, 
$w$ is a $2$-dimensional subspace of $D(v)$, 
$p(w)=v$. Hence, 
by restricting $\pi_{12},\pi_{22}$ to $w$, we can introduce the 
inhomogeneous coordinate $p^{1}_{ij}$ of fibers of $J(D,2)$ 
around $w$ with  
$\pi_{12}|_{w}={p_{11}^{1}}(w)\omega_{1}|_{w}+{p_{12}^{1}}(w)\omega_{2}|_{w},\ 
\pi_{22}|_{w}={p_{21}^{1}}(w)\omega_{1}|_{w}+{p_{22}^{1}}(w)\omega_{2}|_{w}.$  
Moreover, $w$ satisfies $d\varpi_{1}|_{w}\equiv d\varpi_{2}|_{w}\equiv 0$: 
\begin{align*}
d\varpi_{1}|_{w}&\equiv \omega_{2}|_{w}\wedge \pi_{12}|_{w}
                 \equiv {p_{11}^{1}}(w)\omega_{2}|_{w}\wedge\omega_{1}|_{w},\\
d\varpi_{2}|_{w}&\equiv \omega_{1}|_{w}\wedge \pi_{12}|_{w}
                  +\omega_{2}|_{w}\wedge \pi_{22}|_{w}
         \equiv ({p_{12}^{1}}(w)-{p_{21}^{1}}(w))\omega_{1}|_{w}\wedge\omega_{2}|_{w}. 
\end{align*} 
Hence we obtain the defining equations $f_1=f_2=0$ of $\Sigma(R)$ in 
$U_{\omega_{1}\omega_{2}}$ of $J(D,2)$, where 
$f_1={p_{11}^{1}}, f_2=p_{12}^{1}-p_{21}^{1}$, that is,  
$\left\{f_1=f_2=0 \right\}\subset U_{\omega_{1}\omega_{2}}.$ 
Then $df_1, df_2$ are independent on $\left\{f_1=f_2=0\right\}$.\\ 
(II) On $U_{\omega_{1}\pi_{12}}$:\par 
For $w\in U_{\omega_{1}\pi_{12}}$, 
by restricting $\omega_{2}, \pi_{22}$ to $w$, we we introduce the 
inhomogeneous coordinate $p^{2}_{ij}$ of fibers of $J(D,2)$ 
around $w$ with   
$\omega_{2}|_{w}={p_{11}^{2}}(w)\omega_{1}|_{w}+{p_{12}^{2}}(w)\pi_{12}|_{w},\quad 
\pi_{22}|_{w}={p_{21}^{2}}(w)\omega_{1}|_{w}+{p_{22}^{2}}(w)\pi_{12}|_{w}.$  
Moreover, $w$ satisfies $d\varpi_{1}|_{w}\equiv d\varpi_{2}|_{w}\equiv 0$: 
\begin{align*}
d\varpi_{1}|_{w}&\equiv \omega_{2}|_{w}\wedge \pi_{12}|_{w} 
                 \equiv {p_{11}^{2}}(w)\omega_{1}|_{w}\wedge \pi_{12}|_{w},\\
d\varpi_{2}|_{w}&\equiv \omega_{1}|_{w}\wedge \pi_{12}|_{w}
                   +\omega_{2}|_{w}\wedge \pi_{22}|_{w}
                \equiv (1+{p_{11}^{2}}(w){p_{22}^{2}}(w)
                  -{p_{12}^{2}}(w){p_{21}^{2}}(w))\omega_{1}|_{w}\wedge\pi_{12}|_{w}.               
\end{align*} 
Then the defining functions of $\Sigma(R)$ are independent.\\
(III) On $U_{\omega_{1}\pi_{22}}$:\par 
For $w\in U_{\omega_{1}\pi_{22}}$, by 
restricting $\omega_{2}, \pi_{12}$ to $w$, 
we introduce the inhomogeneous coordinate $p^{3}_{ij}$ of fibers of $J(D,2)$ 
around $w$ with  
$\omega_{2}|_{w}={p_{11}^{3}}(w)\omega_{1}|_{w}+{p_{12}^{3}}(w)\pi_{22}|_{w},\ 
\pi_{12}|_{w}={p_{21}^{3}}(w)\omega_{1}|_{w}+{p_{22}^{3}}(w)\pi_{22}|_{w}.$ 
Moreover, $w$ satisfies $d\varpi_{1}|_{w}\equiv d\varpi_{2}|_{w}\equiv 0$: 
\begin{align*}
d\varpi_{1}|_{w}&\equiv \omega_{2}|_{w}\wedge \pi_{12}|_{w} 
              \equiv ({p_{11}^{3}}(w){p_{22}^{3}}(w)-{p_{12}^{3}}(w)
                {p_{21}^{3}}(w))\omega_{1}|_{w}\wedge\pi_{22}|_{w},\\
d\varpi_{2}|_{w}&\equiv \omega_{1}|_{w}\wedge \pi_{12}|_{w}
                    +\omega_{2}|_{w}\wedge \pi_{22}|_{w}
              \equiv ({p_{11}^{3}}(w)+{p_{22}^{3}}(w))\omega_{1}|_{w}\wedge\pi_{22}|_{w}. 
\end{align*} 
Therefore, we obtain the defining equations $f_1=f_2=0$ of $\Sigma(R)$ in 
$U_{\omega_{1}\pi_{22}}$ of $J(D,2)$, where 
$f_1={p_{11}^{3}}{p_{22}^{3}}-{p_{12}^{3}}{p_{21}^{3}},\ f_2={p_{11}^{3}}+{p_{22}^{3}}$, 
that is,   
$\left\{f_1=f_2=0 \right\}\subset U_{\omega_{1}\pi_{22}}.$  
Then, $df_1,df_2$ are linearly dependent on 
$S:=\left\{{p_{11}^{3}}={p_{22}^{3}}, {p_{12}^{3}}={p_{21}^{3}}=0\right\}$. 
Hence, $S \cap \Sigma(R)=\{{p_{11}^{3}}={p_{22}^{3}}={p_{12}^{3}}={p_{21}^{3}}= 0\}$ which is a point on each fiber is a singular subset in $\Sigma(R)$ .\\
(IV) On $U_{\omega_{2}\pi_{12}}$:\par 
For $w\in U_{\omega_{2}\pi_{12}}$, 
by restricting $\omega_{1}, \pi_{22}$ to $w$, we introduce the 
inhomogeneous coordinate $p^{4}_{ij}$ of fibers of $J(D,2)$ 
around $w$ with   
$\omega_{1}|_{w}={p_{11}^{4}}(w)\omega_{2}|_{w}+{p_{12}^{4}}(w)\pi_{12}|_{w},\ 
\pi_{22}|_{w}={p_{21}^{4}}(w)\omega_{2}|_{w}+{p_{22}^{4}}(w)\pi_{12}|_{w}$.  
Moreover, $w$ satisfies $d\varpi_{1}|_{w}\equiv d\varpi_{2}|_{w}\equiv 0$. 
However, we have
$d\varpi_{1}|_{w}\equiv \omega_{2}|_{w}\wedge \pi_{12}|_{w}\not \equiv 0$.  
Hence, there does not exist integral element, that is,    
$U_{\omega_{2}\pi_{12}}\cap p^{-1}(U)=\emptyset$.\\
(V) On $U_{\omega_{2}\pi_{22}}$:\par 
For $w\in U_{\omega_{2}\pi_{22}}$, 
by restricting $\omega_{1}, \pi_{12}$ to $w$, 
we can introduce the 
inhomogeneous coordinate $p^{5}_{ij}$ of fibers of $J(D,2)$ 
around $w$ with    
$\omega_{1}|_{w}={p_{11}^{5}}(w)\omega_{2}|_{w}+{p_{12}^{5}}(w)\pi_{22}|_{w},\ 
\pi_{12}|_{w}={p_{21}^{5}}(w)\omega_{2}|_{w}+{p_{22}^{5}}(w)\pi_{22}|_{w}$.  
Moreover, $w$ satisfies $d\varpi_{1}|_{w}\equiv d\varpi_{2}|_{w}\equiv 0$: 
\begin{align*}
d\varpi_{1}|_{w}&\equiv \omega_{2}|_{w}\wedge \pi_{12}|_{w} 
                 \equiv {p_{22}^{5}}(w)\omega_{2}|_{w}\wedge \pi_{22}|_{w},\\
d\varpi_{2}|_{w}&\equiv \omega_{1}|_{w}\wedge \pi_{12}|_{w}
                +\omega_{2}|_{w}\wedge \pi_{22}|_{w}
              \equiv (1+{p_{11}^{5}}(w){p_{22}^{5}}(w)
                  -{p_{12}^{5}}(w){p_{21}^{5}}(w))\omega_{2}|_{w}\wedge\pi_{22}|_{w}. 
\end{align*} 
Then the defining functions of $\Sigma(R)$ are independent 
in the same as (I).\\
(VI) On $U_{\pi_{12}\pi_{22}}$:\par 
For $w\in U_{\pi_{12}\pi_{22}}$, 
by restricting $\omega_{1}, \omega_{2}$ to $w$, 
we introduce the inhomogeneous coordinate $p^{6}_{ij}$ 
of fibers of $J(D,2)$ around $w$ with  
$\omega_{1}|_{w}={p_{11}^{6}}(w)\pi_{12}|_{w}+{p_{12}^{6}}(w)\pi_{22}|_{w},\ 
\omega_{2}|_{w}={p_{21}^{6}}(w)\pi_{12}|_{w}+{p_{22}^{6}}(w)\pi_{22}|_{w}$. 
Moreover, $w$ satisfies $d\varpi_{1}|_{w}\equiv d\varpi_{2}|_{w}\equiv 0$: 
\begin{align*}
d\varpi_{1}|_{w}&\equiv \omega_{2}|_{w}\wedge \pi_{12}|_{w} 
                \equiv {p_{22}^{6}}(w)\pi_{22}|_{w}\wedge\pi_{12}|_{w},\\
d\varpi_{2}|_{w}&\equiv \omega_{1}|_{w}\wedge \pi_{12}|_{w}
                    +\omega_{2}|_{w}\wedge \pi_{22}|_{w}
                \equiv ({p_{21}^{6}}(w)-{p_{12}^{6}}(w))\pi_{12}|_{w}\wedge\pi_{22}|_{w}. 
\end{align*} 
Then the defining functions of $\Sigma(R)$ are also independent.\par  
Summarizing these discussions, the rank 2 prolongations $\Sigma(R)$ for locally parabolic equations $R$ has singular points, that is, 
these are not smooth.  
\end{proof}
We set 
$P_{\omega_{1}\omega_{2}}:=p^{-1}(U)\cap U_{\omega_{1}\omega_{2}},\  
 P_{\omega_{1}\pi_{12}}:=p^{-1}(U)\cap U_{\omega_{1}\pi_{12}},\ 
 P_{\omega_{1}\pi_{22}}:=p^{-1}(U)\cap U_{\omega_{1}\pi_{22}},\ 
P_{\omega_{2}\pi_{22}}:=p^{-1}(U)\cap U_{\omega_{2}\pi_{22}}$, and  
$P_{\pi_{12}\pi_{22}}:=p^{-1}(U)\cap U_{\pi_{12}\pi_{22}}$.  
\begin{lemma}\label{par-lem}
We have 
$p^{-1}(U)=P_{\omega_{1}\omega_{2}}\cup P_{\omega_{1}\pi_{22}}\cup P_{\pi_{12}\pi_{22}} \label{parcover}$. %
\end{lemma}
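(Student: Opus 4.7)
The plan is to start from the six-set cover of $\pi^{-1}(U)$ already set up in the proof of Lemma \ref{par-pro} and show that three of the six pieces are redundant when intersected with $\Sigma(R)$. Case (IV) of that proof already gives $P_{\omega_2\pi_{12}}=\emptyset$, so that piece drops out for free. The remaining task is to verify the two inclusions $P_{\omega_1\pi_{12}}\subset P_{\omega_1\omega_2}$ and $P_{\omega_2\pi_{22}}\subset P_{\omega_1\omega_2}$; combining these with the cover $\pi^{-1}(U)=\bigcup U_{\bullet\bullet}$ then yields the claimed identity.

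For the first inclusion, I would read off from case (II) that on $\Sigma(R)\cap U_{\omega_1\pi_{12}}$ the defining equations are $p_{11}^{2}=0$ and $1+p_{11}^{2}p_{22}^{2}-p_{12}^{2}p_{21}^{2}=0$, which force $p_{12}^{2}p_{21}^{2}=1$; in particular $p_{12}^{2}\neq 0$. Substituting into $\omega_{2}|_{w}=p_{11}^{2}(w)\omega_{1}|_{w}+p_{12}^{2}(w)\pi_{12}|_{w}$ gives $\omega_{2}|_{w}=p_{12}^{2}(w)\pi_{12}|_{w}$, so $\omega_{1}|_{w}\wedge\omega_{2}|_{w}=p_{12}^{2}(w)\,\omega_{1}|_{w}\wedge\pi_{12}|_{w}\neq 0$, placing $w$ in $U_{\omega_{1}\omega_{2}}$ and hence in $P_{\omega_{1}\omega_{2}}$.

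For the second inclusion, the parallel computation in case (V) shows that $\Sigma(R)\cap U_{\omega_2\pi_{22}}$ is cut out by $p_{22}^{5}=0$ and $1+p_{11}^{5}p_{22}^{5}-p_{12}^{5}p_{21}^{5}=0$, whence $p_{12}^{5}p_{21}^{5}=1$ and $p_{12}^{5}\neq 0$. Then $\omega_{1}|_{w}=p_{11}^{5}(w)\omega_{2}|_{w}+p_{12}^{5}(w)\pi_{22}|_{w}$ yields $\omega_{1}|_{w}\wedge\omega_{2}|_{w}=p_{12}^{5}(w)\,\pi_{22}|_{w}\wedge\omega_{2}|_{w}\neq 0$, so again $w\in P_{\omega_{1}\omega_{2}}$.

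I do not expect a serious obstacle here: the lemma is essentially a bookkeeping corollary of the case-by-case inhomogeneous-coordinate analysis in Lemma \ref{par-pro}. The only point that requires a moment of care is noticing that the quadratic relations coming from $d\varpi_{2}|_{w}\equiv 0$ (namely $p_{12}^{2}p_{21}^{2}=1$ and $p_{12}^{5}p_{21}^{5}=1$), combined with the vanishing of $p_{11}^{2}$ respectively $p_{22}^{5}$, force the relevant off-diagonal coefficient to be nonzero, which is exactly what makes $\omega_{1}|_{w}\wedge\omega_{2}|_{w}\neq 0$ on each of the two redundant strata.
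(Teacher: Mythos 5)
Your proposal is correct and follows essentially the same route as the paper: reduce to the six-chart cover from Lemma \ref{par-pro}, discard the empty chart $P_{\omega_{2}\pi_{12}}$, and show $P_{\omega_{1}\pi_{12}},P_{\omega_{2}\pi_{22}}\subset P_{\omega_{1}\omega_{2}}$ because the integral-element equations ($p_{11}^{2}=0$, $1+p_{11}^{2}p_{22}^{2}-p_{12}^{2}p_{21}^{2}=0$, and their case-(V) analogues) force $p_{12}^{2}\neq 0$, resp.\ $p_{12}^{5}\neq 0$, hence $\omega_{1}|_{w}\wedge\omega_{2}|_{w}\neq 0$. The paper phrases this as a contradiction and leaves the second inclusion to ``the same argument,'' while you verify both directly; the content is identical.
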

\begin{proof}
From the discussion of the proof of the previous proposition, we have 
$p^{-1}(U)=P_{\omega_{1}\omega_{2}}\cup P_{\omega_{1}\pi_{12}}
\cup P_{\omega_{1}\pi_{22}}
\cup P_{\omega_{2}\pi_{22}}\cup P_{\pi_{12}\pi_{22}}.$ 
Hence, it is sufficient to prove $P_{\omega_{1}\pi_{12}},\ P_{\omega_{2}\pi_{22}}\subset P_{\omega_{1}\omega_{2}}$. 
For the open set $P_{\omega_{1}\pi_{12}}$, we prove this property. 
Let $w$ be any point in $P_{\omega_{1}\pi_{12}}\subset p^{-1}(U)$. 
Here, if $w \not\in P_{\omega_{1}\omega_{2}}$, 
then $\omega_{1}|_{w}\wedge \omega_{2}|_{w}=0$.  
Hence, by $\omega_{1}|_{w}\wedge \omega_{2}|_{w}={p_{12}^{2}}(w)\omega_{1}|_{w}\wedge \pi_{12}|_{w}$, 
we have the condition ${p_{12}^{2}}(w)=0$. 
However, $w$ is an integral element, and  
we have ${p_{12}^{2}}(w)\not=0$.   
Thus, we have $P_{\omega_{1}\pi_{12}}\subset P_{\omega_{1}\omega_{2}}$. 
For the open set $P_{\omega_{2}\pi_{22}}$, we also obtain the statement from the same argument.  
\end{proof}
\begin{theorem}\label{par-topology}
Let $(R,D)$ be a locally parabolic equation. Then, the 
rank $2$ prolongation $\Sigma(R)$ has singular points,  
and it has the structure of pinched torus fibration.  
\end{theorem}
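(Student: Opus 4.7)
The plan is, for each $v\in R$, to analyze the fiber $\Sigma_v:=p^{-1}(v)$ using the open cover of $\Sigma_v$ inherited from Lemma \ref{par-lem}, and to show it is homeomorphic to a pinched torus, with the singular point from Lemma \ref{par-pro} playing the role of the pinch point.

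First, I would read off the shape of the fiber on each of the three charts from the calculations already carried out in Lemma \ref{par-pro}. In $P_{\omega_1\omega_2}$ the defining equations $p_{11}^1=0$ and $p_{12}^1=p_{21}^1$ leave $(p_{12}^1,p_{22}^1)\in\mathbb{R}^2$ free, so this piece of the fiber is an affine plane; symmetrically, $P_{\pi_{12}\pi_{22}}\cap p^{-1}(v)$ is the affine plane with free coordinates $(p_{11}^6,p_{12}^6)$. On $P_{\omega_1\pi_{22}}\cap p^{-1}(v)$, after imposing $p_{22}^3=-p_{11}^3$, the fiber is the real quadric cone $(p_{11}^3)^2+p_{12}^3p_{21}^3=0$ with apex at the origin, which is precisely the singular point of $\Sigma(R)$.

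Second, I would compute the transition maps on the pairwise overlaps, following the method used in the proof of Theorem \ref{hyp-pro}. For instance on $P_{\omega_1\omega_2}\cap P_{\omega_1\pi_{22}}$ (where $p_{22}^1\neq 0$), expanding $\pi_{22}|_w$ and $\pi_{12}|_w$ in the two coordinate systems gives $p_{11}^3=-p_{12}^1/p_{22}^1$, $p_{12}^3=1/p_{22}^1$, $p_{21}^3=-(p_{12}^1)^2/p_{22}^1$, and the relation $(p_{11}^3)^2+p_{12}^3p_{21}^3=0$ is then automatic. An analogous calculation handles the overlap with $P_{\pi_{12}\pi_{22}}$.

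To identify the global topology cleanly I would pass to the invariant Pl\"ucker picture. Using Pl\"ucker coordinates $q_{ij}$ on $\text{Gr}(2,D(v))\subset\mathbb{P}(\Lambda^2 D(v))$ with respect to the coframe $\{\omega_1,\omega_2,\pi_{12},\pi_{22}\}$, the integrability conditions $d\varpi_1|_w=d\varpi_2|_w=0$ become the linear equations $q_{23}=0$ and $q_{13}+q_{24}=0$; combined with the Pl\"ucker relation, this cuts out the surface $Q:q_{12}q_{34}+q_{24}^2=0$ in the $\mathbb{P}^3$ with coordinates $[q_{12}:q_{14}:q_{24}:q_{34}]$. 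Setting $a=q_{12}+q_{34}$, $b=q_{12}-q_{34}$ brings $Q$ to $b^2=a^2+q_{24}^2$: the affine chart $b=1$ is a cylinder $S^1\times\mathbb{R}$, while the only real point at $b=0$ is the apex $[0:1:0:0]$. Hence $Q(\mathbb{R})$ is a cylinder whose two ends are compactified to the same point, i.e., $S^2$ with two points identified, which is a pinched torus. Since the construction is uniform in $v$, this yields the asserted pinched torus fibration. The main obstacle is this topological identification: the chart-level transitions are mechanical extensions of those in the hyperbolic case, whereas recognizing the compactified quadric cone as a pinched torus is the substantive geometric step.
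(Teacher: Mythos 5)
Your argument is correct, and it reaches the conclusion by a genuinely different route at the decisive step. The paper's own proof is the combination of Lemma \ref{par-lem} with the chart computations of Lemma \ref{par-pro}: the fiber is decomposed set-theoretically as $\mathbb{R}^2\cup\mathbb{R}\cup\{\text{a point}\}$ (the pieces $\Sigma_0$, $\Sigma_1$, $\Sigma_2$), and the pinched-torus topology is then obtained by gluing these pieces via transition functions, exactly as the torus was obtained in the proof of Theorem \ref{hyp-pro}. Your chart descriptions and your transition map on $P_{\omega_1\omega_2}\cap P_{\omega_1\pi_{22}}$ agree with that part of the paper (your formulas check out: $p_{11}^3=-p_{12}^1/p_{22}^1$, $p_{12}^3=1/p_{22}^1$, $p_{21}^3=-(p_{12}^1)^2/p_{22}^1$, and necessarily $p_{22}^3=p_{12}^1/p_{22}^1$), but you then identify the global fiber topology invariantly: with respect to the coframe of (\ref{parequ}), the fiber is the intersection of the Pl\"ucker quadric $Gr(2,4)$ with the hyperplanes $q_{23}=0$ and $q_{13}+q_{24}=0$, i.e.\ the rank-three quadric $q_{12}q_{34}+q_{24}^2=0$ in $\mathbb{P}^3$ (your reduction to $b^2=a^2+q_{24}^2$ is off by a harmless rescaling of $q_{24}$), whose real locus is a cylinder with both ends compactified to the single vertex $[0:1:0:0]$ --- a pinched torus whose pinch point is precisely the singular point $\{p_{11}^3=p_{12}^3=p_{21}^3=p_{22}^3=0\}$ found in Lemma \ref{par-pro}. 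This buys a cleaner and more self-contained justification of the topological identification than the paper's one-sentence gluing argument, and it makes the trichotomy of Corollary \ref{all-topology} transparent (the hyperbolic, parabolic and elliptic normal forms give quadrics of signature $(2,2)$, rank $3$, and $(3,1)$, hence torus, pinched torus, sphere); what the paper's chart-by-chart route buys instead is the explicit covering $P_{\omega_1\omega_2}\cup P_{\omega_1\pi_{22}}\cup P_{\pi_{12}\pi_{22}}$ and coordinates that are reused later for the symbol algebras and the construction of singular solutions. Your final appeal to uniformity in $v$ is legitimate, since the coframe realizing (\ref{parequ}) is defined on a neighborhood $U$ and trivializes $J(D,2)|_U$, so the fiberwise quadric description varies smoothly and yields the asserted fibration structure.
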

\begin{figure}[htbp]
\begin{center}
\includegraphics[width=2.5cm]{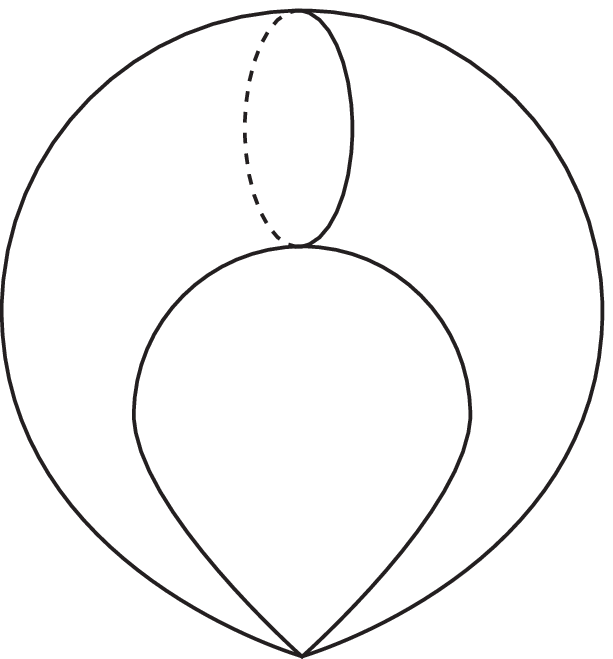}
\end{center}
\end{figure}
\begin{proof}
By the above lemma, note that the fiber $p^{-1}(w)$ at $w\in R$ 
decompose to the disjoint union  
$p^{-1}(w)=\mathbb R^2\cup \mathbb R\cup \left\{ \rm{a\ point}\right\}$ 
as a set. 
Moreover, by gluing on 
$p^{-1}(U)=P_{\omega_{1}\omega_{2}}\cup P_{\omega_{1}\pi_{22}}\cup P_{\pi_{12}\pi_{22}}$
in the proof of the previous proposition and lemma, we obtain the statement. 
\end{proof}
\noindent
{\bf Rank 2 prolongations of elliptic equations.} 
Let $(R,D)$ be a locally elliptic equation. Then,  
there exists a local coframe 
$\left\{\varpi_0, \varpi_1, \varpi_2, \omega_1, \omega_2, \pi_{11}, \pi_{12}\right\}$  around $x\in R$ such that  
$D=\{\varpi_0=\varpi_1=\varpi_2=0 \}$ and 
the following structure equation holds:   
\begin{align}\label{ellequ}
d\varpi_{0}&\equiv \omega_{1}\wedge \varpi_{1}+\omega_{2}\wedge \varpi_{2}
\quad \mod \ \varpi_{0}, \nonumber \\
d\varpi_{1}&\equiv \omega_{1}\wedge \pi_{11}+\omega_{2}\wedge \pi_{12} \quad \mod\ \varpi_{0}, \varpi_{1}, \varpi_{2},\\
d\varpi_{2}&\equiv \omega_{1}\wedge \pi_{12}-\omega_{2}\wedge \pi_{11} \ \mod\ \varpi_{0}, 
\varpi_{1}, \varpi_{2}. \nonumber
\end{align}
From this structure equation, we investigate the rank 2 prolongation 
$\Sigma(R)$. 
Let $U$ be an open set in $R$, and $\pi:J(D,2)\to R$ the projection. 
Then $\pi^{-1}(U)$ is covered by $6$ open sets in $J(D,2)$: 
\begin{equation}
\pi^{-1}(U)=U_{\omega_{1}\omega_{2}}\cup U_{\omega_{1}\pi_{11}}
\cup U_{\omega_{1}\pi_{12}}\cup U_{\omega_{2}\pi_{11}}
\cup U_{\omega_{2}\pi_{12}}\cup U_{\pi_{11}\pi_{12}}, 
\end{equation}
where each open set is also given in the same way 
as hyperbolic case (\ref{Grassmann-cover}). 
Now we explicitly describe the defining 
equation of $\Sigma(R)$ in terms of the inhomogeneous Grassmann coordinate of fibers in  
$U_{\omega_{1}\omega_{2}},...., U_{\pi_{11}\pi_{12}}$.\\
(I) On $U_{\omega_{1}\omega_{2}}$:\par 
For $w\in U_{\omega_{1}\omega_{2}}$, 
$w$ is a $2$-dimensional subspace of $D(v)$, 
$p(w)=v$. Hence, by restricting $\pi_{11},\pi_{12}$ to $w$, 
we introduce the inhomogeneous coordinate $p^{1}_{ij}$ of fibers of 
$J(D,2)$ around $w$ with 
$\pi_{11}|_{w}={p_{11}^{1}}(w)\omega_{1}|_{w}+{p_{12}^{1}}(w)\omega_{2}|_{w},\ 
\pi_{12}|_{w}={p_{21}^{1}}(w)\omega_{1}|_{w}+{p_{22}^{1}}(w)\omega_{2}|_{w}.$  
Moreover $w$ satisfies $d\varpi_{1}|_{w}\equiv d\varpi_{2}|_{w}\equiv 0$: 
\begin{align*}
d\varpi_{1}|_{w}&\equiv \omega_{1}|_{w}\wedge \pi_{11}|_{w}
                              +\omega_{2}|_{w}\wedge \pi_{12}|_{w} 
                 \equiv ({p_{12}^{1}}(w)-{p_{21}^{1}}(w)) 
                    \omega_{1}|  _{w}\wedge\omega_{2}|_{w},\\
d\varpi_{2}|_{w}&\equiv \omega_{1}|_{w}\wedge \pi_{12}|_{w}
                    -\omega_{2}|_{w}\wedge \pi_{11}|_{w}
        \equiv ({p_{11}^{1}}(w)+{p_{22}^{1}}(w))\omega_{1}|_{w}\wedge\omega_{2}|_{w}. 
\end{align*} 
Hence we obtain the defining equations $f_1=f_2=0$ of $\Sigma(R)$ in  $U_{\omega_{1}\omega_{2}}$ of $J(D,2)$, where 
$f_1={p_{12}^{1}}-{p_{21}^{1}}, f_2={p_{11}^{1}}+{p_{22}^{1}}$, that is,  
$\left\{f_1=f_2=0 \right\}\subset U_{\omega_{1}\omega_{2}}.$  
Then $df_1, df_2$ are independent on $\left\{f_1=f_2=0\right\}$.\\ 
(II) On $U_{\omega_{1}\pi_{11}}$:\par 
For $w\in U_{\omega_{1}\pi_{11}}$, 
by restricting $\omega_{2}, \pi_{12}$ to $w$, we  
introduce the inhomogeneous coordinate $p^{2}_{ij}$ of fibers of 
$J(D,2)$ around $w$ with 
$\omega_{2}|_{w}={p_{11}^{2}}(w)\omega_{1}|_{w}+{p_{12}^{2}}(w)\pi_{11}|_{w},\ 
\pi_{12}|_{w}={p_{21}^{2}}(w)\omega_{1}|_{w}+{p_{22}^{2}}(w)\pi_{11}|_{w}$.  
Moreover, $w$ satisfies $d\varpi_{1}|_{w}\equiv d\varpi_{2}|_{w}\equiv 0$: 
\begin{align*}
d\varpi_{1}|_{w}&\equiv \omega_{1}|_{w}\wedge \pi_{11}|_{w}
                       +\omega_{2}|_{w}\wedge \pi_{12}|_{w} 
                 \equiv (1+{p_{11}^{2}}(w){p_{22}^{2}}(w)-{p_{12}^{2}}(w){p_{21}^{2}}(w)) 
                  \omega_{1}|_{w}\wedge \pi_{11}|_{w},\\
d\varpi_{2}|_{w}&\equiv \omega_{1}|_{w}\wedge \pi_{12}|_{w}
                   -\omega_{2}|_{w}\wedge \pi_{11}|_{w} 
            \equiv (-{p_{11}^{2}}(w)+{p_{22}^{2}}(w))\omega_{1}|_{w}\wedge\pi_{11}|_{w}. 
\end{align*} 
Then the defining functions of $\Sigma(R)$ are independent 
in the same as (I).\\
(III) On $U_{\omega_{1}\pi_{12}}$:\par
For $w\in U_{\omega_{1}\pi_{12}}$, 
by restricting $\omega_{2}, \pi_{11}$ to $w$, we 
introduce the inhomogeneous coordinate $p^{3}_{ij}$ of fibers of 
$J(D,2)$ around $w$ with 
$\omega_{2}|_{w}={p_{11}^{3}}(w)\omega_{1}|_{w}+{p_{12}^{3}}(w)\pi_{12}|_{w},\ 
\pi_{11}|_{w}={p_{21}^{3}}(w)\omega_{1}|_{w}+{p_{22}^{3}}(w)\pi_{12}|_{w}$. 
Moreover, $w$ satisfies $d\varpi_{1}|_{w}\equiv d\varpi_{2}|_{w}\equiv 0$: 
\begin{align*}
d\varpi_{1}|_{w}&\equiv 
    \omega_{1}|_{w}\wedge \pi_{11}|_{w}+\omega_{2}|_{w}\wedge \pi_{12}|_{w} 
        \equiv ({p_{11}^{3}}(w)+{p_{22}^{3}}(w))\omega_{1}|_{w}\wedge \pi_{12}|_{w},\\
d\varpi_{2}|_{w}&\equiv 
    \omega_{1}|_{w}\wedge \pi_{12}|_{w}-\omega_{2}|_{w}\wedge \pi_{11}|_{w}
        \equiv (1-{p_{11}^{3}}(w){p_{22}^{3}}(w)+{p_{12}^{3}}(w){p_{21}^{3}}(w))
                \omega_{1}|_{w}\wedge\pi_{12}|_{w}. 
\end{align*} 
Then the defining functions of $\Sigma(R)$ are also independent.\\
(IV) On $U_{\omega_{2}\pi_{11}}$:\par 
For $w\in U_{\omega_{2}\pi_{11}}$, 
by restricting $\omega_{1}, \pi_{12}$ to $w$, we 
introduce the inhomogeneous coordinate $p^{4}_{ij}$ of fibers of 
$J(D,2)$ around $w$ with
$\omega_{1}|_{w}={p_{11}^{4}}(w)\omega_{2}|_{w}+{p_{12}^{4}}(w)\pi_{11}|_{w},\ 
\pi_{12}|_{w}={p_{21}^{4}}(w)\omega_{2}|_{w}+{p_{22}^{4}}(w)\pi_{11}|_{w}$. 
Moreover, $w$ satisfies $d\varpi_{1}|_{w}\equiv d\varpi_{2}|_{w}\equiv 0$: 
\begin{align*}
d\varpi_{1}|_{w}&\equiv 
  \omega_{1}|_{w}\wedge \pi_{11}|_{w}+\omega_{2}|_{w}\wedge \pi_{12}|_{w}
    \equiv ({p_{11}^{4}}(w)+{p_{22}^{4}}(w))\omega_{2}|_{w}\wedge \pi_{11}|_{w},\\
d\varpi_{2}|_{w}&\equiv 
  \omega_{1}|_{w}\wedge \pi_{12}|_{w}-\omega_{2}|_{w}\wedge \pi_{11}|_{w}
    \equiv ({p_{11}^{4}}(w){p_{22}^{4}}(w)-{p_{12}^{4}}(w){p_{21}^{4}}(w)-1)
                \omega_{2}|_{w}\wedge\pi_{11}|_{w}. 
\end{align*} 
Then the defining functions of $\Sigma(R)$ are also independent.\\
(V) On $U_{\omega_{2}\pi_{12}}$:\par 
For $w\in U_{\omega_{2}\pi_{12}}$, 
by restricting $\omega_{1}, \pi_{11}$ to $w$, we 
introduce the inhomogeneous coordinate $p^{5}_{ij}$ of fibers of 
$J(D,2)$ around $w$ with
$\omega_{1}|_{w}={p_{11}^{5}}(w)\omega_{2}|_{w}+{p_{12}^{5}}(w)\pi_{12}|_{w},\ 
\pi_{11}|_{w}={p_{21}^{5}}(w)\omega_{2}|_{w}+{p_{22}^{5}}(w)\pi_{12}|_{w}$. 
Moreover, $w$ satisfies $d\varpi_{1}|_{w}\equiv d\varpi_{2}|_{w}\equiv 0$: 
\begin{align*}
d\varpi_{1}|_{w}&\equiv 
   \omega_{1}|_{w}\wedge \pi_{11}|_{w}+\omega_{2}|_{w}\wedge \pi_{12}|_{w} 
   \equiv (1+{p_{11}^{5}}(w){p_{22}^{5}}(w)-{p_{12}^{5}}(w){p_{21}^{5}}(w)) 
                  \omega_{2}|_{w}\wedge \pi_{12}|_{w},\\
d\varpi_{2}|_{w}&\equiv 
   \omega_{1}|_{w}\wedge \pi_{12}|_{w}-\omega_{2}|_{w}\wedge \pi_{11}|_{w} 
   \equiv ({p_{11}^{5}}(w)-{p_{22}^{5}}(w))\omega_{2}|_{w}\wedge\pi_{12}|_{w}. 
\end{align*} 
Then the defining functions of $\Sigma(R)$ are also independent.\\
(VI) On $U_{\pi_{11}\pi_{12}}$:\par 
For $w\in U_{\pi_{11}\pi_{12}}$, 
by restricting $\omega_{1}, \omega_{2}$ to $w$, we 
introduce the inhomogeneous coordinate $p^{6}_{ij}$ of fibers of 
$J(D,2)$ around $w$ with
$\omega_{1}|_{w}={p_{11}^{6}}(w)\pi_{11}|_{w}+{p_{12}^{6}}(w)\pi_{12}|_{w},\ 
\omega_{2}|_{w}={p_{21}^{6}}(w)\pi_{11}|_{w}+{p_{22}^{6}}(w)\pi_{12}|_{w}$. 
Moreover, $w$ satisfies 
$d\varpi_{1}|_{w}\equiv d\varpi_{2}|_{w}\equiv 0$: 
\begin{align*}
d\varpi_{1}|_{w}&\equiv 
  \omega_{1}|_{w}\wedge \pi_{11}|_{w}+\omega_{2}|_{w}\wedge \pi_{12}|_{w} 
       \equiv (-{p_{12}^{6}}(w)+{p_{21}^{6}}(w))\pi_{11}|_{w}\wedge \pi_{12}|_{w},\\
d\varpi_{2}|_{w}&\equiv 
  \omega_{1}|_{w}\wedge \pi_{12}|_{w}-\omega_{2}|_{w}\wedge \pi_{11}|_{w} 
   \equiv ({p_{11}^{6}}(w)+{p_{22}^{6}}(w))\pi_{11}|_{w}\wedge \pi_{12}|_{w}. 
\end{align*} 
Then the defining functions of $\Sigma(R)$ are also independent.\par 
Summarizing these discussions, the rank 2 prolongation $\Sigma(R)$ of a locally elliptic equation $R$ is smooth, and it has the covering 
$p^{-1}(U)=P_{\omega_{1}\omega_{2}}\cup P_{\omega_{1}\pi_{11}}
\cup P_{\omega_{1}\pi_{12}}\cup P_{\omega_{2}\pi_{11}}
\cup P_{\omega_{2}\pi_{12}}\cup P_{\pi_{11}\pi_{12}},$ 
where
$P_{\omega_{1}\omega_{2}}:=p^{-1}(U)\cap U_{\omega_{1}\omega_{2}},\   
 P_{\omega_{1}\pi_{11}}:=p^{-1}(U)\cap U_{\omega_{1}\pi_{11}},\ 
 P_{\omega_{1}\pi_{12}}:=p^{-1}(U)\cap U_{\omega_{1}\pi_{12}},\ 
P_{\omega_{2}\pi_{11}}:=p^{-1}(U)\cap U_{\omega_{2}\pi_{11}},\ 
 P_{\omega_{2}\pi_{12}}:=p^{-1}(U)\cap U_{\omega_{2}\pi_{12}}$, and   
$P_{\pi_{11}\pi_{12}}:=p^{-1}(U)\cap U_{\pi_{11}\pi_{12}}$. 
However, this covering is not essential in the following sense.   
\begin{lemma}\label{ell-lem}
Let $(R,D)$ be a locally elliptic equation and $p:\Sigma(R)\to R$ be the 
rank $2$ prolongation. Then, for any open set $U\subset R$, we have
$p^{-1}(U)=P_{\omega_{1}\omega_{2}}\cup P_{\pi_{11}\pi_{12}}$.
\end{lemma}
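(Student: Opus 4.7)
The plan is to show that each of the four \emph{mixed} open sets $P_{\omega_{1}\pi_{11}}$, $P_{\omega_{1}\pi_{12}}$, $P_{\omega_{2}\pi_{11}}$, $P_{\omega_{2}\pi_{12}}$ is already contained in $P_{\omega_{1}\omega_{2}}\cup P_{\pi_{11}\pi_{12}}$. Since the full covering established just before the lemma has six pieces, removing these four will immediately yield the claimed two-set covering.

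The key observation, which I would emphasize at the outset, is that the defining equations of $\Sigma(R)$ in each mixed chart always contain a nonzero additive constant $\pm 1$ (coming from the $\omega_{1}\wedge\pi_{11}$ and $\omega_{2}\wedge\pi_{12}$ terms in $d\varpi_{1}$ together with $\omega_{1}\wedge\pi_{12}-\omega_{2}\wedge\pi_{11}$ in $d\varpi_{2}$), which is the algebraic manifestation of ellipticity. This constant is the obstruction that makes the mixed charts \emph{redundant} in the elliptic case, in sharp contrast with the hyperbolic situation.

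The argument I would carry out is by contradiction. Take $w\in P_{\omega_{1}\pi_{11}}$ and suppose $w\notin P_{\omega_{1}\omega_{2}}\cup P_{\pi_{11}\pi_{12}}$. The condition $w\notin P_{\omega_{1}\omega_{2}}$ means $\omega_{1}|_{w}\wedge\omega_{2}|_{w}=0$; substituting the chart expression $\omega_{2}|_{w}=p_{11}^{2}(w)\omega_{1}|_{w}+p_{12}^{2}(w)\pi_{11}|_{w}$ forces $p_{12}^{2}(w)=0$. Similarly $w\notin P_{\pi_{11}\pi_{12}}$ forces $p_{21}^{2}(w)=0$. But then the first defining equation of $\Sigma(R)$ on $U_{\omega_{1}\pi_{11}}$, namely $1+p_{11}^{2}p_{22}^{2}-p_{12}^{2}p_{21}^{2}=0$, reduces to $1+p_{11}^{2}(w)p_{22}^{2}(w)=0$, while the second defining equation $-p_{11}^{2}+p_{22}^{2}=0$ gives $p_{11}^{2}(w)=p_{22}^{2}(w)$, so $1+(p_{11}^{2}(w))^{2}=0$, which is impossible over $\mathbb{R}$. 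This contradiction proves $P_{\omega_{1}\pi_{11}}\subset P_{\omega_{1}\omega_{2}}\cup P_{\pi_{11}\pi_{12}}$.

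The remaining three mixed charts are handled by exactly the same pattern: translate the two non-membership conditions into the vanishing of two off-diagonal chart coefficients, feed these into the pair of defining equations of $\Sigma(R)$ found in cases (III), (IV), (V) of the preceding computation, and observe that in each case one is left with an identity of the form $1+(\text{real square})=0$, which is impossible. I do not expect any genuine obstacle here, only careful bookkeeping of the four parallel computations; the substance of the lemma lies entirely in the positivity-of-squares argument, which is the defining feature of the elliptic case.
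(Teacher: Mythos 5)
Your proof is correct and follows essentially the same route as the paper: work chart by chart, translate non-membership in $P_{\omega_{1}\omega_{2}}$ into the vanishing of the off-diagonal Grassmann coordinate, and combine this with the two defining equations of $\Sigma(R)$ to reach an impossible identity of the form $1+(\text{real square})=0$. The only difference is that the paper obtains the stronger containment of each mixed chart in $P_{\omega_{1}\omega_{2}}$ alone, since once $p_{12}^{2}(w)=0$ the cross term $p_{12}^{2}p_{21}^{2}$ already vanishes; your additional use of $w\notin P_{\pi_{11}\pi_{12}}$ to force $p_{21}^{2}(w)=0$ is therefore redundant, though harmless.
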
 
\begin{proof}
It is sufficient to prove $P_{\omega_{1}\pi_{11}},\ P_{\omega_{1}\pi_{12}},\ 
P_{\omega_{2}\pi_{11}},\ P_{\omega_{2}\pi_{12}}\subset P_{\omega_{1}\omega_{2}}$. 
For the open set $P_{\omega_{1}\pi_{11}}$, we prove this property. 
Let $w$ be a point in $P_{\omega_{1}\pi_{11}}\subset p^{-1}(U)$. 
Here, if $w \not\in P_{\omega_{1}\omega_{2}}$, 
then the condition $\omega_{1}|_{w}\wedge \omega_{2}|_{w}=0$ is satisfied.   
Hence, by $\omega_{1}|_{w}\wedge \omega_{2}|_{w}={p_{12}^{2}}(w)\omega_{1}|_{w}\wedge \pi_{11}|_{w}$, 
we have ${p_{12}^{2}}(w)=0$. However, $w$ is an integral element. In terms of $f_1=f_2=0$,   
we have $({p_{11}^{2}})^2=-1$. This is a contradiction.   
Thus, we have $P_{\omega_{1}\pi_{11}}\subset P_{\omega_{1}\omega_{2}}$. 
For other open sets, we also have the statement from the similar argument.  
\end{proof}
\begin{theorem}\label{ell-topology}
Let $(R,D)$ be a locally elliptic equation. Then, the 
rank $2$ prolongation $\Sigma(R)$ is a smooth submanifold of $J(D,2)$, 
and it is a $S^2$-bundle over $R$. 
\end{theorem}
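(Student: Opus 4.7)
The plan is to combine the case-by-case computation already displayed in (I)--(VI) above with Lemma \ref{ell-lem}: the case analysis shows that in each of the six open sets the two defining functions of $\Sigma(R)$ cut out a smooth submanifold of $J(D,2)$, so smoothness of $\Sigma(R)$ is immediate and all that remains is to identify the fiber topologically. By Lemma \ref{ell-lem}, for any open $U \subset R$ the preimage decomposes as $p^{-1}(U) = P_{\omega_1\omega_2} \cup P_{\pi_{11}\pi_{12}}$, so each fiber is covered by exactly two charts, and the task reduces to understanding the transition between them.

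First I would parametrize the fiber on $P_{\omega_1\omega_2}$: the defining equations $p^1_{12}-p^1_{21}=0$ and $p^1_{11}+p^1_{22}=0$ from (I) allow one to set $a:=p^1_{11}$, $b:=p^1_{12}$, whence on an integral element $w$,
\[
\pi_{11}|_w = a\,\omega_1|_w + b\,\omega_2|_w, \qquad \pi_{12}|_w = b\,\omega_1|_w - a\,\omega_2|_w,
\]
so the fiber-intersection with $P_{\omega_1\omega_2}$ is a copy of $\mathbb{R}^2$ with coordinates $(a,b)$. The analogous computation on $P_{\pi_{11}\pi_{12}}$, using the defining relations from (VI), gives a second $\mathbb{R}^2$ with coordinates $(\tilde a,\tilde b)$ characterized by $\omega_1|_w = \tilde a\,\pi_{11}|_w + \tilde b\,\pi_{12}|_w$, $\omega_2|_w = \tilde b\,\pi_{11}|_w - \tilde a\,\pi_{12}|_w$.

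Next I would compute the transition function on $P_{\omega_1\omega_2}\cap P_{\pi_{11}\pi_{12}}$ by inverting the $2\times 2$ matrix $\bigl(\begin{smallmatrix} a & b \\ b & -a \end{smallmatrix}\bigr)$, whose determinant is $-(a^2+b^2)$ and is nonzero precisely on the overlap. The inversion yields
\[
\tilde a = \frac{a}{a^2+b^2}, \qquad \tilde b = \frac{b}{a^2+b^2},
\]
which is the standard gluing of two copies of $\mathbb{R}^2$ producing $S^2$ (inverse stereographic projection from opposite poles). Hence each fiber is $S^2$, and the local triviality follows from the local triviality of $J(D,2)\to R$ restricted to $\Sigma(R)$, giving the claimed $S^2$-bundle structure.

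The only nontrivial step is the transition computation; once the key observation is made that the defining relations in both charts impose the \emph{same} skew-symmetric form $\bigl(\begin{smallmatrix} a & b \\ b & -a \end{smallmatrix}\bigr)$ (which is invertible iff $(a,b)\neq 0$, precisely the elliptic positivity that distinguishes this case from the hyperbolic and parabolic ones), the identification with $S^2$ is forced. The smoothness half of the theorem requires no new work beyond the pointwise computations in (I)--(VI) above.
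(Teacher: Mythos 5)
Your proposal is correct and follows essentially the same route as the paper: smoothness comes from the chart-by-chart computations (I)--(VI), and the fiber topology from the two-chart covering of Lemma \ref{ell-lem} together with the transition map on the overlap; in fact you write out explicitly the inversion-type transition $(\tilde a,\tilde b)=(a,b)/(a^{2}+b^{2})$, which the paper only invokes by analogy with the hyperbolic and parabolic cases. One minor slip: the matrix with rows $(a,b)$ and $(b,-a)$ is symmetric and trace-free, not skew-symmetric, though your essential point --- that its determinant $-(a^{2}+b^{2})$ vanishes only at the origin, which is exactly the elliptic feature --- is correct.
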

\begin{proof}
By the above lemma, note that the fiber $p^{-1}(w)$ at $w\in R$ 
decompose to the disjoint union  
$p^{-1}(w)=\mathbb R^2\cup \left\{ \rm{a\ point}\right\}$ as a set. 
Moreover, we obtain the statement from the same argument to the parabolic case.  
\end{proof}
\noindent
{\bf Characterization of equations by the fiber topology.}
\ 
We obtain one of the main results by summarizing 
theorems of the previous part of this section. 
\begin{corollary}\label{all-topology}
Let $R=\{ F=0 \}$ be a second-order regular PDE and 
$\Sigma(R)$ be the its prolongation. Let $p:\Sigma(R) \to R$ be the natural projection. Then,  
\begin{enumerate}
\item [$(${\rm 1}$)$] $w\in R$ is hyperbolic   $\iff$ $p^{-1}(w)$ is a 
$2$-dimensional torus $T^2$.
\item [$(${\rm 2}$)$] $w\in R$ is parabolic   $\iff$ $p^{-1}(w)$ is a pinched $2$-dimensional torus.
\item [$(${\rm 3}$)$] $w\in R$ is elliptic   $\iff$ $p^{-1}(w)$ is a $2$-dimensional sphere $S^2$.
\end{enumerate}
\end{corollary}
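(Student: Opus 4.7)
The plan is to reduce Corollary~\ref{all-topology} to the three topology-of-fiber theorems already established in this section, combined with the trichotomy on the discriminant. For the forward implication at a hyperbolic (resp.\ elliptic) point $w$, I would invoke the openness of $\{\Delta<0\}$ (resp.\ $\{\Delta>0\}$): by continuity of $F$ and its derivatives, there is a neighborhood $U$ of $w$ throughout which the equation is locally hyperbolic (resp.\ elliptic). Then Theorem~\ref{hyp-pro} (resp.\ Theorem~\ref{ell-topology}) applied to $(U, D|_U)$ identifies $p^{-1}(U)$ as a $T^2$-bundle (resp.\ $S^2$-bundle), whence $p^{-1}(w)\cong T^2$ (resp.\ $S^2$).

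The parabolic direction is the delicate one, since a parabolic point need not admit a neighborhood consisting entirely of parabolic points, so Theorem~\ref{par-topology} cannot be invoked verbatim. I would handle it by observing that the fiber $\Sigma_w=p^{-1}(w)$ is a purely pointwise object: it depends only on the linear-algebraic datum consisting of $D(w)$ together with the restricted $2$-forms $d\varpi_1|_{D(w)}$ and $d\varpi_2|_{D(w)}$. At any parabolic $w$, this datum can be put into the normal form underlying~(\ref{parequ}) \emph{at $w$} through a suitable choice of basis of $D(w)$ and of complementary covectors $\pi_{12},\pi_{22}$. The integral-element computations in Lemma~\ref{par-pro}, Lemma~\ref{par-lem}, and Theorem~\ref{par-topology} proceed chart by chart on $J(D,2)$ and are pointwise in $w$, so they remain valid on the single fiber and produce the required pinched torus.

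For the reverse implications, the regularity condition together with the trichotomy on the sign of $\Delta(w)$ ensures that every $w\in R$ belongs to exactly one of the three classes. Since $T^2$, the pinched $2$-torus, and $S^2$ are pairwise non-homeomorphic (separated, e.g., by Euler characteristics $0, 1, 2$, or by smoothness together with genus), the three forward implications automatically yield the three converse implications: if $p^{-1}(w)$ has the topology characteristic of one type, then $w$ cannot be of either of the other two.

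The principal obstacle is the parabolic case: verifying rigorously that the normal form~(\ref{parequ}) can be realized pointwise at a possibly isolated parabolic $w$ (without assuming parabolicity of a full neighborhood), and that each step of the earlier chart-by-chart integral-element analysis depends only on the $1$-jet of the coframe at $w$. Once this pointwise realization is secured, the corollary becomes a bookkeeping assembly of the three previous theorems and the trichotomy.
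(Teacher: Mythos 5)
Your proposal is correct and its central idea --- that $p^{-1}(w)$ is cut out inside $\mathrm{Gr}(2,D(w))\cong Gr(2,4)$ by the purely pointwise data $d\varpi_1|_{D(w)},\,d\varpi_2|_{D(w)}$, so only the pointwise normal form (hence the sign of $\Delta(w)$) matters --- is exactly the paper's proof of Corollary~\ref{all-topology}. The paper simply applies this pointwise observation uniformly to all three types (rather than invoking openness of $\{\Delta<0\}$ and $\{\Delta>0\}$ for the hyperbolic and elliptic cases), and it leaves implicit both the pointwise realization of the normal forms (\ref{hypequ}), (\ref{parequ}), (\ref{ellequ}) and the converse step via the pairwise non-homeomorphic fibers that you spell out.
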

\begin{proof}
Note that the fiber $p^{-1}(w)$ is defined by the structure 
equation of $D$ at $w$ as a subset in the fiber 
$J_w\cong Gr(2,4)$ of the fibration $\pi:J(D,2)\to R$. 
From this point of view, the topology of the fiber   
$p^{-1}(w)$ depends only on the pointwise structure equations 
(\ref{hypequ}), (\ref{parequ}) and (\ref{ellequ}).   
\end{proof} 
\section{Structures of the canonical systems on the rank 2 prolongations}
In this section, we study the geometric structures of the 
rank 2 prolongations $(\Sigma(R), \hat D)$ 
for each class of equations. We first recall Tanaka theory of weakly 
regular differential systems in this section.
For more details, we refer the reader to 
\cite{Tan1} and \cite{Y3}. \par
\medskip\noindent  
{\bf Derived system, Weak derived system.}
Let $D$ be a differential system on a manifold $R$. 
We denote by $\mathcal D=\Gamma(D)$ the sheaf of sections to $D$. 
The {\it derived system} $\partial D$ of a differential system $D$ is defined, 
in terms of sections, by 
$\partial \mathcal D:=\mathcal D+[\mathcal D,\mathcal D].$
In general, $\partial D$ is obtained as a 
subsheaf of the tangent sheaf of $R$.  
Moreover, {\it higher derived systems} $\partial^{k} D$ are 
defined successively by 
$\partial^{k} \mathcal D:=\partial (\partial^{k-1} \mathcal D),$ 
where we set $\partial^{0} D=D$ by convention. 
On the other hand, $k$-th {\it weak derived systems}
$\partial^{(k)} D$ of $D$ are defined inductively by 
$\partial^{(k)} \mathcal D:=\partial^{(k-1)}\mathcal D
                             +[\mathcal D,\partial^{(k-1)}\mathcal D].$
\begin{definition}
A differential system $D$ is called {\it regular} ({\it weakly regular}), if $\partial^{k} D$ 
(resp. $\partial^{(k)} D$) is a subbundle for each $k$.
\end{definition}
If $D$ is not weakly regular around $x\in R$, then $x$ is called 
{\it a singular point} in the sense of Tanaka theory.  
These derived systems are also interpreted by using 
annihilators as follows \cite{S}:  
Let $D=\{\varpi_1=\cdots =\varpi_s=0 \}$ be a differential system on a 
manifold $R$. We denote by $D^{\perp}$ the annihilator 
subbundle of $D$ in $T^* R$, namely, 
\begin{eqnarray*}
D^{\perp}(x) &=& \{\omega\in T_{x}^{*} R\ |\ \omega(X)=0\ \text{for any}\ X \in D(x)
 \}, \\
 &=& <\varpi_1, \cdots ,\varpi_s >.
\end{eqnarray*}
Then the annihilator $(\partial D)^{\perp}$ of 
the first derived system of $D$ 
is given by 
\[
(\partial D)^{\perp}=\{\varpi \in D^{\perp}\ |\ d\varpi \equiv 0\ 
(\bmod\ D^{\perp})\}.
\]
Moreover, the annihilator $(\partial^{(k+1)} D)^{\perp}$ of the $(k+1)$-th weak
derived system of $D$ is given by  
\begin{eqnarray*}
(\partial^{(k+1)} D)^{\perp} &=& \{\varpi \in (\partial^{(k)} D)^{\perp}\ |\ 
d\varpi \equiv 0\ (\bmod\ (\partial^{(k)} D)^{\perp}, \\
& & \hspace{3cm} (\partial^{(p)} D)^{\perp}\wedge (\partial^{(q)} D)^{\perp},\ 2 \le
p,q \le k-1)\}.
\end{eqnarray*}
We set $D^{-1}:=D,\ D^{-k}:=\partial ^{(k-1)} D$ ($k\geq 2$), for 
a weakly regular differential system $D$. Then we have 
(\cite[Proposition 1.1]{Tan1}):
\begin{enumerate} 
\item [(T1)] There exists a unique positive integer $\mu$ such that 
$$D^{-1}\subset D^{-2}\subset \cdot\cdot\cdot \subset D^{-k} \subset \cdot\cdot\cdot 
\subset D^{-(\mu-1)} \subset D^{-\mu}=D^{-(\mu+1)}=\cdot\cdot\cdot$$
\item[(T2)] $[\mathcal D^{p}, \mathcal D^{q}]\subset \mathcal D^{p+q}$\quad \quad  
for\ all\ $p,\ q<0$. 
\end{enumerate}
\noindent{\bf Symbol algebra of differential system.} 
Let $(R, D)$ be a weakly regular 
differential system such that 
$$TR=D^{-\mu}\supset D^{-(\mu-1)}\supset \cdot\cdot\cdot \supset
D^{-1}=:D.$$
For all $x\in R$, we put $\mathfrak g_{-1}(x):=D^{-1}(x)=D(x),\    
\mathfrak g_{p}(x):=D^{p}(x)/D^{p+1}(x),\ (p=-2,-3, \ldots, -\mu)$ and 
$$\mathfrak m(x):=\bigoplus_{p=-1}^{-\mu} \mathfrak g_{p}(x).$$
Then, dim $\mathfrak m(x)=$ dim $R$. 
We set $\mathfrak g_{p}(x)=\left\{0\right\}$ when $p\leq-\mu-1$. 
For $X\in \mathfrak g_{p}(x),\ Y\in \mathfrak g_{q}(x)$, the Lie bracket 
$[X, Y]\in\mathfrak g_{p+q}(x)$ is defined in the following way:\\ 
Let $\tilde X\in \mathcal D^{p},\ \tilde Y\in \mathcal D^{q}$ be extensions 
($\tilde X_{x}=X,\ \tilde Y_{x}=Y$). Then $[\tilde X, \tilde Y]\in \mathcal D^{p+q}$,  
and we set $[X,Y]:=[\tilde X, \tilde Y]_{x}\in \mathfrak g_{p+q}(x)$. 
It does not depend on the choice of the extensions  
because of the equation
\[
[f \tilde{X}, g \tilde{Y}] =fg[\tilde{X},\tilde{Y}] 
+f(\tilde{X} g) \tilde{Y} -g(\tilde{Y} f)\tilde{X}\quad 
(f,g \in C^{\infty}(R)).
\] 
The Lie algebra $\mathfrak m(x)$ is a nilpotent graded Lie algebra. 
we call $(\mathfrak m(x),\ [\ ,\ ])$ the 
{\it symbol algebra} of $(R, D)$ at $x$. 
Note that the symbol algebra $(\mathfrak m(x),\ [\ ,\ ])$ satisfies the generating conditions 
$[\mathfrak g_{p}, \mathfrak g_{-1}]=\mathfrak g_{p-1}\ \ (p<0).$ \par
Later, Morimoto \cite{M} 
introduced the notion of a filtered manifold as generalization 
of the weakly regular differential system.  
We define a {\it filtered manifold} $(R,F)$ by a pair of a manifold $R$ 
and a tangential filtration $F$. Here, a tangential filtration 
$F$ on $R$ is a sequence $\left\{F^{p}\right\}_{p<0}$ of subbundles of 
the tangent bundle $TR$ such that the following conditions are satisfied: 
\begin{enumerate} 
\item [(M1)] $TR=F^{k}=\cdot\cdot\cdot =F^{-\mu}\supset \cdot\cdot\cdot \supset 
F^{p}\supset F^{p+1}\supset \cdot\cdot\cdot \supset F^{0}=\left\{0\right\},$
\item[(M2)] $[\mathcal F^{p}, \mathcal F^{q}]\subset \mathcal F^{p+q}$\quad \quad  
for\ all\ $p,\ q<0$,  
\end{enumerate}
where $\mathcal F^{p}=\Gamma(F^{p})$ is the set of sections of $F^{p}$.\par 
Let $(R,F)$  be a filtered manifold, for $x\in R$, we set 
$\mathfrak f_{p}(x):=F^{p}(x)/F^{p+1}(x),$ 
and 
$$\mathfrak f(x):=\bigoplus_{p<0} \mathfrak f_{p}(x).$$
For $X \in \mathfrak f_{p}(x),\ Y \in \mathfrak f_{q}(x)$, 
Lie bracket $[X,Y]\in \mathfrak f_{p+q}(x)$ is defined by:\\
Let $\tilde X\in \mathcal F^{p},\ \tilde Y\in \mathcal F^{q}$ be extensions 
($\tilde X_{x}=X,\ \tilde Y_{x}=Y$). Then $[\tilde X, \tilde Y]\in \mathcal F^{p+q}$,  
and we set $[X,Y]:=[\tilde X, \tilde Y]_{x}\in \mathfrak f_{p+q}(x)$. 
It does not depend on the choice of the extensions. 
The Lie algebra 
$\mathfrak f(x)$ is also a nilpotent graded Lie algebra. 
We call $(\mathfrak f(x),\ [\ ,\ ])$ the {\it symbol algebra} of 
$(R, F)$ at $x$. 
In general $(\mathfrak f(x),\ [\ ,\ ])$ does not satisfy the generating conditions. \par
\medskip\noindent{\bf Structures of rank 2 prolongations for hyperbolic equations.} 
Let $(R,D)$ be a locally hyperbolic equation, and $(\Sigma(R),\hat D)$  
the rank 2 prolongation. 
We first explain the geometric meaning of the open covering 
$P_{\omega_{1}\omega_{2}}\cup P_{\omega_{1}\pi_{22}}\cup 
P_{\omega_{2}\pi_{11}}\cup P_{\pi_{11}\pi_{22}}$ in the proof of 
Theorem \ref{hyp-pro}. The set $\Sigma(R)$ has a geometric decomposition: 
\begin{equation}\label{decomposition1}
\Sigma(R)=\Sigma_{0}\cup \Sigma_{1}\cup \Sigma_{2}\quad \quad ({\rm disjoint\ union}). 
\end{equation} 
where $\Sigma_{i}=\left\{w \in \Sigma(R)\ |\ {\rm dim}\ (w\cap {\rm fiber})=i \right\}$, $i=0,1,2,$ 
and ``fiber" means that the fiber of $TR\supset D\to TJ^1$. 
Then, locally, we have 
$\Sigma_{0}|_{p^{-1}(U)}=P_{\omega_{1}\omega_{2}},\ 
\Sigma_{1}|_{p^{-1}(U)}=(P_{\omega_{1}\pi_{22}}\cup P_{\omega_{2}\pi_{11}})
\backslash P_{\omega_{1}\omega_{2}},\ 
\Sigma_{2}|_{p^{-1}(U)}=P_{\pi_{11}\pi_{22}}\backslash 
(P_{\omega_{1}\omega_{2}}\cup P_{\omega_{1}\pi_{22}}\cup P_{\omega_{2}\pi_{11}})$. 
The set $\Sigma_0$ is an open subset in $\Sigma(R)$, and is a $\mathbb{R}^2$--bundle over $R$. The set $\Sigma_1$ is a codimension 1 submanifold in $\Sigma(R)$, and is a $(\mathbb{R} \cup \mathbb{R})$-bundle over $R$. The set $\Sigma_2$ is a codimension 2 submanifold in $\Sigma(R)$, and is a section of $\Sigma(R) \to R$. 
\begin{proposition}\label{hyp-derived}
The differential system $\hat D$ on $\Sigma(R)$ is regular, but 
is not weakly regular. More precisely, we obtain that 
$\hat D\subset \partial \hat D\subset \partial^{2}\hat D \subset 
\partial^{3}\hat D=T\Sigma(R).$
Moreover, we have $\partial^{2}\hat D=\partial^{(2)}\hat D$, 
$\partial^{(3)}\hat D=T\Sigma(R)\ \ {\rm on}\ 
\Sigma_{0}\cup\Sigma_{1}$, and   
$\partial^{(3)}\hat D=\partial^{(2)}\hat D\ \ {\rm on}\ \Sigma_{2}$.  
\end{proposition}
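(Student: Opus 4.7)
The plan is to work chart by chart on the atlas $\{P_{\omega_1\omega_2}, P_{\omega_1\pi_{22}}, P_{\omega_2\pi_{11}}, P_{\pi_{11}\pi_{22}}\}$ constructed in the proof of Theorem~\ref{hyp-pro}; the decomposition $\Sigma(R)=\Sigma_0\cup\Sigma_1\cup\Sigma_2$ is compatible with this atlas ($\Sigma_0 = P_{\omega_1\omega_2}$, $\Sigma_1\subset P_{\omega_1\pi_{22}}\cup P_{\omega_2\pi_{11}}$, and $\Sigma_2$ is the section $p^6_{11}=p^6_{22}=0$ inside $P_{\pi_{11}\pi_{22}}$). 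For the tower $\hat D\subset \partial\hat D\subset \partial^2\hat D\subset \partial^3\hat D = T\Sigma(R)$ I would use the annihilator description. On $P_{\omega_1\omega_2}$, $\hat D^\perp = \langle \varpi_0,\varpi_1,\varpi_2,\varpi_{\pi_{11}},\varpi_{\pi_{22}}\rangle$ with $\varpi_{\pi_{ii}} = \pi_{ii}-p^1_{ii}\omega_i$. The structure equations (\ref{hypequ}) together with the congruences $\pi_{ii}\equiv p^1_{ii}\omega_i \mod \hat D^\perp$ force $d\varpi_0, d\varpi_1, d\varpi_2$ into $\hat D^\perp$, while $d\varpi_{\pi_{ii}}\equiv -dp^1_{ii}\wedge\omega_i$ lies outside. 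Hence $(\partial\hat D)^\perp = \langle \varpi_0,\varpi_1,\varpi_2\rangle$ (rank $\partial\hat D = 6$). Iterating, $d\varpi_1\equiv\omega_1\wedge\pi_{11}$ and $d\varpi_2\equiv\omega_2\wedge\pi_{22}$ are generically nonzero modulo $\langle\varpi_0,\varpi_1,\varpi_2\rangle$, whereas $d\varpi_0\equiv\omega_1\wedge\varpi_1+\omega_2\wedge\varpi_2\equiv 0$, giving $(\partial^2\hat D)^\perp = \langle\varpi_0\rangle$; finally $d\varpi_0\not\equiv 0$ modulo $\varpi_0$, so $(\partial^3\hat D)^\perp = 0$. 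The same rank sequence $4,6,8,9$ appears on every chart, proving regularity.

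The equality $\partial^{(2)}\hat D = \partial^2\hat D$ follows because the sections generating $\partial\hat D/\hat D$ (locally the duals of $\pi_{11}, \pi_{22}$) commute with themselves and with the fiber verticals of $\hat D$, so $[\partial\hat D,\partial\hat D]\subset \hat D+[\hat D,\partial\hat D]$. The discrepancy between $\Sigma_0\cup\Sigma_1$ and $\Sigma_2$ appears only at $\partial^{(3)}\hat D$, and I would make it visible via the dual frame. Let $X_1,X_2$ be the vector fields on $R$ dual to $\omega_1,\omega_2$, let $Y_1, Y_2$ be dual to $\pi_{11},\pi_{22}$, and let $Z_0,Z_1,Z_2$ be dual to $\varpi_0,\varpi_1,\varpi_2$. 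Cartan's formula applied to (\ref{hypequ}) produces, with the $Z_0$-coefficient computed exactly, $[X_i,Y_i]\equiv -Z_i$, $[X_i,Z_i]\equiv -Z_0$, and $[Y_i,Z_j]$ with vanishing $Z_0$-component. On $P_{\omega_1\omega_2}$, $\hat D$ is framed by $\partial/\partial p^1_{11},\partial/\partial p^1_{22}, X_1+p^1_{11}Y_1, X_2+p^1_{22}Y_2$; successive brackets push $Y_i$ into $\partial\hat D$, $Z_i$ into $\partial^{(2)}\hat D$, and then $[X_i+p^1_{ii}Y_i, Z_i]\equiv -Z_0$ puts $Z_0$ into $\partial^{(3)}\hat D$, so $\partial^{(3)}\hat D = T\Sigma(R)$ on $\Sigma_0$. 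The same argument on $P_{\omega_1\pi_{22}}$ applies at $\Sigma_1$: the field $X_1+p^3_{21}Y_1$ retains its full $X_1$ component when $p^3_{12}=0$, and the bracket $[X_1+p^3_{21}Y_1, Z_1]\equiv -Z_0$ again produces $Z_0$.

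At $\Sigma_2$, however, the roles of $X_i$ and $Y_i$ in the frame are swapped. On $P_{\pi_{11}\pi_{22}}$, $\hat D$ is framed by $\partial/\partial p^6_{11},\partial/\partial p^6_{22}, Y_1+p^6_{11}X_1, Y_2+p^6_{22}X_2$. The brackets $[\partial/\partial p^6_{ii}, Y_i+p^6_{ii}X_i] = X_i$ place $X_1, X_2$ into $\partial\hat D$, and $[Y_i+p^6_{ii}X_i, X_i]\equiv Z_i$ (evaluated at $\Sigma_2$) places $Z_1,Z_2$ into $\partial^{(2)}\hat D$. But the only $\hat D$-bracket with $Z_i$ that could generate $Z_0$ is $[Y_i+p^6_{ii}X_i, Z_i]=[Y_i,Z_i]+p^6_{ii}[X_i,Z_i]$; the first summand has no $Z_0$-component by the structure equations, the second equals $-p^6_{ii}Z_0$, and the coefficient $p^6_{ii}$ vanishes on $\Sigma_2$. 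All remaining $\hat D$-brackets with $X_1,X_2,Z_1,Z_2$ likewise carry no $Z_0$-component. Hence $\partial^{(3)}\hat D = \partial^{(2)}\hat D$ on $\Sigma_2$, so $\hat D$ is not weakly regular. The main obstacle is the chart-by-chart bookkeeping and the need to track $Z_0$-components of several brackets; conceptually the proof rests on the observation that at $\Sigma_2$ the horizontal fields $X_1, X_2$ --- the only generators of a $Z_0$-component via the bracket with $Z_i$ --- lie in $\partial\hat D$ but not in $\hat D$, which blocks the weak-derived sequence from reaching them while leaving the ordinary derived sequence unimpeded.
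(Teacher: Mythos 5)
Your proposal is correct and follows essentially the same route as the paper: the same chart-by-chart computation with the canonical coframes from Theorem \ref{hyp-pro}, the same decomposition $\Sigma_0\cup\Sigma_1\cup\Sigma_2$, and the same identification that the weak derived flag stalls exactly on $\Sigma_2$ because the only $Z_0$-producing bracket carries the coefficient $p^6_{ii}$, which vanishes there; your dual-frame bracket computations are just the dual formulation of the paper's congruences $d\varpi\equiv 0$ modulo the annihilator forms and their wedge products (and coincide with the frame computations the paper performs for the symbol algebras). The one loose spot is justifying $\partial^{(2)}\hat D=\partial^{2}\hat D$ by saying the new sections ``commute'' --- the dual frame fields $Y_1,Y_2$ need not commute --- but the needed fact is immediate from $d\varpi_0(Y_1,Y_2)=0$, i.e.\ from the same structure equation you already use, so this is a phrasing issue rather than a gap.
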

\begin{proof}
On each component $\Sigma_{i}$ in the decomposition (\ref{decomposition1}), 
we calculate the structure equation of $\hat D$. First, we consider it 
on $\Sigma_{0}$.  
The canonical system $\hat D$ on $U_{\omega_{1}\omega_{2}}$ is given by 
$\hat D=\left\{\varpi_{0}=\varpi_{1}=\varpi_{2}=
\varpi_{\pi_{11}}=\varpi_{\pi_{22}}=0\right\},$ 
where $\varpi_{\pi_{11}}:=\pi_{11}-{p_{11}^{1}}\omega_{1},\   
\varpi_{\pi_{22}}:=\pi_{22}-{p_{22}^{1}}\omega_{2}.$ 
The structure equation of $\hat D$ on $\Sigma_0$ is given by 
\begin{align*}
d\varpi_{i}&\equiv 0 \hspace{0.5cm}(i=0,1,2)\hspace{0.5cm} \quad
\mod\ \varpi_{0},\ \varpi_{1},\ \varpi_{2},\ \varpi_{\pi_{11}},\ \varpi_{\pi_{22}},\\
d\varpi_{\pi_{11}}&\equiv \omega_{1}\wedge ({dp_{11}^{1}}+f\omega_{2})\quad  
\mod\ \varpi_{0},\ \varpi_{1},\ \varpi_{2},\ \varpi_{\pi_{11}},\ \varpi_{\pi_{22}},\\
d\varpi_{\pi_{22}}&\equiv \omega_{2}\wedge ({dp_{22}^{1}}+g\omega_{1})\quad 
\mod\ \varpi_{0},\ \varpi_{1},\ \varpi_{2},\ \varpi_{\pi_{11}},\ \varpi_{\pi_{22}}, 
\end{align*}
using by appropriate functions $f$ and $g$ since $\pi_{11}, \pi_{22}, \omega_1, \omega_2$ are 1-forms on the base manifold $R$. Hence we have 
$\partial \hat D=\left\{\varpi_{0}=\varpi_{1}=
\varpi_{2}=0\right\}={p_{*}^{-1}}(D)$. 
The structure equation of $\partial \hat D$ is written as 
\begin{align*}
d\varpi_{0}&\equiv 0 \hspace{3cm} \quad \mod\ \varpi_{0},\ \varpi_{1},\ \varpi_{2},\\
d\varpi_{1}&\equiv \omega_{1}\wedge \varpi_{\pi_{11}} \hspace{2cm}  
\mod\ \varpi_{0},\ \varpi_{1},\ \varpi_{2}, \varpi_{\pi_{11}}\wedge \varpi_{\pi_{22}},\\
d\varpi_{2}&\equiv \omega_{2}\wedge \varpi_{\pi_{22}} \hspace{2cm}  
\mod\ \varpi_{0},\ \varpi_{1},\ \varpi_{2}, \varpi_{\pi_{11}}\wedge \varpi_{\pi_{22}}. 
\end{align*}
Hence we have $\partial ^{2}\hat D=\partial ^{(2)}\hat D=\left\{\varpi_{0}=0\right\}.$  
The structure equation of $\partial^{2} \hat D$ is described by
\begin{align*}
d\varpi_{0}\equiv \omega_{1}\wedge \varpi_{1}+\omega_{2}\wedge \varpi_{2}
  \ \mod\ &  \varpi_{0},\ \varpi_{1}\wedge \varpi_{2},\ \varpi_{1}\wedge \varpi_{\pi_{11}},\      
 \varpi_{1}\wedge \varpi_{\pi_{22}},\\ & \varpi_{2}\wedge \varpi_{\pi_{11}}, \varpi_{2}\wedge \varpi_{\pi_{22}},  
\varpi_{\pi_{11}}\wedge \varpi_{\pi_{22}}.
\end{align*}
Therefore, we have $\partial ^{(3)}\hat D=T\Sigma(R).$ 
Next, we consider on $\Sigma_{1}$.   
It is sufficient to prove on $U_{\omega_{1}\pi_{22}}$ 
because the differential system $\hat D$ on $U_{\omega_{1}\pi_{22}}$  
is contact equivalent to the differential system $\hat D$ on $U_{\omega_{2}\pi_{11}}$. 
The canonical system $\hat D$ on $U_{\omega_{1}\pi_{22}}$ is given by 
$\hat D=\left\{\varpi_{0}=\varpi_{1}=\varpi_{2}=
\varpi_{\omega_{2}}=\varpi_{\pi_{11}}=0\right\},$ 
where $\varpi_{\omega_{2}}:=\omega_{2}-{p_{12}^{3}}\pi_{22},\   
\varpi_{\pi_{11}}:=\pi_{11}-{p_{21}^{3}}\omega_{1}.$
For a point $w\in U_{\omega_{1}\pi_{22}}$, $w\in \Sigma_{1}$ if and only if 
$p^{3}_{12}(w)=0$. Therefore, it is enough to consider at $w$ in the hypersurface 
$\left\{p^{3}_{12}=0\right\}\subset \Sigma(R)$. 
The structure equation at a point on $\left\{p^{3}_{12}=0\right\}$ is given by  
\begin{align*}
d\varpi_{i}&\equiv 0 \hspace{0.5cm}(i=0,1,2)\hspace{0.5cm} \quad
\mod\ \varpi_{0},\ \varpi_{1},\ \varpi_{2},\ \varpi_{\omega_{2}},\ \varpi_{\pi_{11}},\\
d\varpi_{\omega_{2}}&\equiv \pi_{22}\wedge ({dp_{12}^{3}}+f\omega_{1})\quad  
\mod\ \varpi_{0},\ \varpi_{1},\ \varpi_{2},\ \varpi_{\omega_{2}},\ \varpi_{\pi_{11}},\\
d\varpi_{\pi_{11}}&\equiv \omega_{1}\wedge ({dp_{21}^{3}}+g\pi_{22})\quad 
\mod\ \varpi_{0},\ \varpi_{1},\ \varpi_{2},\ \varpi_{\omega_{2}},\ \varpi_{\pi_{11}}, 
\end{align*}
where $f$ and $g$ are appropriate functions. Hence we have 
$\partial \hat D=\left\{\varpi_{0}=\varpi_{1}=
\varpi_{2}=0\right\}={p_{*}^{-1}}(D).$ 
The structure equation of $\partial \hat D$ at a point on $\left\{p^{3}_{12}=0\right\}$ is expressed as 
\begin{align*}
d\varpi_{0}&\equiv 0 \hspace{3cm} \quad \mod\ \varpi_{0},\ \varpi_{1},\ \varpi_{2},\\
d\varpi_{1}&\equiv \omega_{1}\wedge \varpi_{\pi_{11}} \hspace{2cm}  
\mod\ \varpi_{0},\ \varpi_{1},\ \varpi_{2}, \varpi_{\omega_{2}}\wedge \varpi_{\pi_{11}},\\
d\varpi_{2}&\equiv \varpi_{\omega_{2}}\wedge {\pi_{22}} \hspace{2cm}  
\mod\ \varpi_{0},\ \varpi_{1},\ \varpi_{2}, \varpi_{\omega_{2}}\wedge \varpi_{\pi_{11}}. 
\end{align*}
Hence we have $\partial ^{2}\hat D=\partial ^{(2)}\hat D=\left\{\varpi_{0}=0\right\}.$ 
The structure equation of $\partial^{2} \hat D$ at a point on $\left\{p^{3}_{12}=0\right\}$ is described by 
\begin{align*}
d\varpi_{0}&\equiv \omega_{1}\wedge \varpi_{1} + \omega_{2}\wedge \varpi_{2} \\
 &\equiv \omega_{1}\wedge \varpi_{1} + (\varpi_{\omega_{2}}-p_{12}^{3}\pi_{22})\wedge \varpi_{2} \\
&\equiv \omega_{1}\wedge \varpi_{1} + \varpi_{\omega_{2}}\wedge \varpi_{2} \\
&\equiv \omega_{1}\wedge \varpi_{1} 
\hspace{2cm} \mod\  \varpi_{0},\ \varpi_{1}\wedge \varpi_{2},\ 
\varpi_{1}\wedge \varpi_{\omega_{2}},\ \varpi_{1}\wedge \varpi_{\pi_{11}},\\ 
 & \hspace{5.5cm} \varpi_{2}\wedge \varpi_{\omega_{2}},\ \varpi_{2}\wedge \varpi_{\pi_{11}},\ 
\varpi_{\omega_{2}}\wedge \varpi_{\pi_{11}}.
\end{align*}
Thus, we have $\partial ^{(3)}\hat D=T\Sigma(R).$ 
Finally, we consider on $\Sigma_{2}$. 
The canonical system $\hat D$ on $U_{\pi_{11}\pi_{22}}$ is given by 
$\hat D=\left\{\varpi_{0}=\varpi_{1}=\varpi_{2}=
\varpi_{\omega_{1}}=\varpi_{\omega_{2}}=0\right\},$ 
where $\varpi_{\omega_{1}}:=\omega_{1}-{p_{11}^{6}}\pi_{11},\   
\varpi_{\omega_{2}}:=\omega_{2}-{p_{22}^{6}}\pi_{22}.$
For a point $w\in U_{\pi_{11}\pi_{22}}$, $w\in \Sigma_{2}$ if and only if 
$p^{6}_{11}(w)=p^{6}_{22}(w)=0$. Therefore, we calculate the structure equation 
of $\hat D$ at a point in codimension 2 submanifold 
$\left\{p^{6}_{11}=p^{6}_{22}=0\right\}\subset \Sigma(R)$. 
The structure equation is given by 
\begin{align*}
d\varpi_{i}&\equiv 0 \hspace{0.5cm}(i=0,1,2)\hspace{0.5cm} \quad
\mod\ \varpi_{0},\ \varpi_{1},\ \varpi_{2},\ \varpi_{\omega_{1}},\ \varpi_{\omega_{2}},\\
d\varpi_{\omega_{1}}&\equiv \pi_{11}\wedge ({dp_{11}^{6}}+f\pi_{22})\quad  
\mod\ \varpi_{0},\ \varpi_{1},\ \varpi_{2},\ \varpi_{\omega_{1}},\ \varpi_{\omega_{2}},\\
d\varpi_{\omega_{2}}&\equiv \pi_{22}\wedge ({dp_{22}^{6}}+g\pi_{11})\quad 
\mod\ \varpi_{0},\ \varpi_{1},\ \varpi_{2},\ \varpi_{\omega_{1}},\ \varpi_{\omega_{2}}.
\end{align*}
where $f$ and $g$ are appropriate functions. 
Hence we have 
$\partial \hat D=\left\{\varpi_{0}=\varpi_{1}=
\varpi_{2}=0\right\}={p_{*}^{-1}}(D).$  
The structure equation of $\partial \hat D$ at a point on $\left\{p^{6}_{11}=p^{6}_{22}=0\right\}$ is written as 
\begin{align*}
d\varpi_{0}&\equiv 0 \hspace{3cm} \quad \mod\ \varpi_{0},\ \varpi_{1},\ \varpi_{2},\\
d\varpi_{1}&\equiv \varpi_{\omega_{1}}\wedge {\pi_{11}} \hspace{2cm}  
\mod\ \varpi_{0},\ \varpi_{1},\ \varpi_{2}, \varpi_{\omega_{1}}\wedge \varpi_{\omega_{2}},\\
d\varpi_{2}&\equiv \varpi_{\omega_{2}}\wedge {\pi_{22}} \hspace{2cm}  
\mod\ \varpi_{0},\ \varpi_{1},\ \varpi_{2}, \varpi_{\omega_{1}}\wedge \varpi_{\omega_{2}}. 
\end{align*}
Hence, we have $\partial ^{2}\hat D=\partial ^{(2)}\hat D=\left\{\varpi_{0}=0\right\}.$
The structure equation of $\partial ^{2}\hat D$ at a point on $\left\{p^{6}_{11}=p^{6}_{22}=0\right\}$ is described by  
\begin{align*}
d\varpi_{0}\equiv 0\ 
\mod\ & \varpi_{0},\ \varpi_{1}\wedge \varpi_{2},\ 
\varpi_{1}\wedge \varpi_{\omega_{1}},\ \varpi_{1}\wedge \varpi_{\omega_{2}},\\  
 & \varpi_{2}\wedge \varpi_{\omega_{1}},\ \varpi_{2}\wedge \varpi_{\omega_{2}},\ 
\varpi_{\omega_{1}}\wedge \varpi_{\omega_{2}}.
\end{align*}
Therefore we obtain $\partial ^{(3)}\hat D=\partial ^{(2)}\hat D.$
\end{proof}
From the above proposition, $(\Sigma(R), \hat D)$ is locally weakly regular around 
$w\in \Sigma_{0}\cup \Sigma_{1}$. So we can define the symbol algebra at $w$ 
in the sense of Tanaka and the following 
holds:  
\begin{proposition}\label{hyp-symbol1}
For $w\in \Sigma_{0}$, the symbol algebra $\mathfrak m_{0}(w)$ 
is isomorphic to $\mathfrak m_{0}$, where  
$\mathfrak m_{0}=\mathfrak g_{-4}\oplus\mathfrak g_{-3}\oplus\mathfrak g_{-2}\oplus\mathfrak g_{-1},$  
whose bracket relations are given by 
$$[X_{p_{11}^{1}},\ X_{\omega_{1}}]=X_{\pi_{11}},\quad  
[X_{p_{22}^{1}},\ X_{\omega_{2}}]=X_{\pi_{22}},\quad    
[X_{\pi_{11}},\ X_{\omega_{1}}]=X_{1},$$ 
$$[X_{\pi_{22}},\ X_{\omega_{2}}]=X_{2},\quad 
[X_{1},\ X_{\omega_{1}}]=[X_{2},\ X_{\omega_{2}}]=X_{0},$$
and the other brackets are trivial.\\ Here $\left\{X_{0},\ X_{1},\ X_{2},\ X_{p_{11}^{1}},\ 
X_{p_{22}^{1}},\ X_{\omega_{1}},\ X_{\omega_{2}},\ 
X_{\pi_{11}},\ X_{\pi_{22}}\right\}$ is a basis of $\mathfrak m_{0}$ and 
\begin{align*}
\mathfrak g_{-1}=\left\{X_{\omega_{1}},\ X_{\omega_{2}},\ X_{p_{11}^{1}},\ X_{p_{22}^{1}}\right\},\  
\mathfrak g_{-2}=\left\{X_{\pi_{11}},\ X_{\pi_{22}}\right\},\ 
\mathfrak g_{-3}=\left\{X_{1},\ X_{2}\right\},\  
\mathfrak g_{-4}=\left\{X_{0}\right\}.
\end{align*} 
For $w\in \Sigma_{1}$, the symbol algebra $\mathfrak m_{1}(w)$ 
is isomorphic to $\mathfrak m_{1}$, where  
$\mathfrak m_{1}=\mathfrak g_{-4}\oplus\mathfrak g_{-3}\oplus\mathfrak g_{-2}\oplus\mathfrak g_{-1}$ 
whose bracket relations are given by
$$[X_{p_{12}^{3}},\ X_{\pi_{22}}]=X_{\omega_{2}},\quad 
[X_{p_{21}^{3}},\ X_{\omega_{1}}]=X_{\pi_{11}},\quad   
[X_{\pi_{11}},\ X_{\omega_{1}}]=X_{1},$$ 
$$[X_{\pi_{22}},\ X_{\omega_{2}}]=X_{2},\quad 
[X_{1},\ X_{\omega_{1}}]=X_{0},$$ 
and the other brackets are trivial.\\ Here  
$\left\{X_{0},\ X_{1},\ X_{2},\ X_{p_{12}^{3}},\  
X_{p_{21}^{3}},\ X_{\omega_{1}},\ X_{\omega_{2}},\ 
X_{\pi_{11}},\ X_{\pi_{22}}\right\}$ is a basis of $\mathfrak m_{1}$ and 
\begin{align*}
\mathfrak g_{-1}=\left\{X_{\omega_{1}},\ X_{\pi_{22}},\ X_{p_{12}^{3}},\ X_{p_{21}^{3}}\right\},\  
\mathfrak g_{-2}=\left\{X_{\omega_{2}},\ X_{\pi_{11}}\right\},\ 
\mathfrak g_{-3}=\left\{X_{1},\ X_{2}\right\},\ 
\mathfrak g_{-4}=\left\{X_{0}\right\}. 
\end{align*}
\end{proposition}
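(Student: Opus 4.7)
The plan is to read off the graded Lie brackets directly from the structure equations of the weak derived flag computed in the proof of Proposition \ref{hyp-derived}, using the duality identity $d\alpha(X,Y) = -\alpha([X,Y])$ modulo lower-order terms. Since $(\Sigma(R),\hat D)$ is weakly regular on $\Sigma_0\cup\Sigma_1$ with flag $\hat D\subset \partial\hat D\subset \partial^{(2)}\hat D\subset T\Sigma(R)$ of ranks $4,6,8,9$, the associated symbol algebra automatically splits into graded pieces of dimensions $4,2,2,1$ in degrees $-1,-2,-3,-4$, matching the announced bases of $\mathfrak m_0$ and $\mathfrak m_1$.

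For $w\in\Sigma_0$, I would work in the chart $U_{\omega_1\omega_2}$, where $\hat D$ is cut out by $\varpi_0,\varpi_1,\varpi_2,\varpi_{\pi_{11}},\varpi_{\pi_{22}}$. Choose the local frame $\{X_0,X_1,X_2,X_{\pi_{11}},X_{\pi_{22}},X_{\omega_1},X_{\omega_2},X_{p^1_{11}},X_{p^1_{22}}\}$ dual to the coframe $\{\varpi_0,\varpi_1,\varpi_2,\varpi_{\pi_{11}},\varpi_{\pi_{22}},\omega_1,\omega_2,dp^1_{11},dp^1_{22}\}$; this is compatible with the filtration, so the degrees are those listed in the statement. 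Applying the duality identity to the structure equations of $\hat D$, $\partial\hat D$ and $\partial^{(2)}\hat D$ on $\Sigma_0$: the relation $d\varpi_{\pi_{ii}}\equiv \omega_i\wedge dp^1_{ii}$ yields $[X_{p^1_{ii}},X_{\omega_i}]=X_{\pi_{ii}}$; the relation $d\varpi_i\equiv\omega_i\wedge\varpi_{\pi_{ii}}$ yields $[X_{\pi_{ii}},X_{\omega_i}]=X_i$; and $d\varpi_0\equiv\omega_1\wedge\varpi_1+\omega_2\wedge\varpi_2$ yields $[X_i,X_{\omega_i}]=X_0$ for $i=1,2$. All remaining brackets among the basis vectors vanish in the associated graded because the corresponding wedge coefficients in the structure equations are zero.

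For $w\in\Sigma_1$ the same recipe applies in the chart $U_{\omega_1\pi_{22}}$ specialized to $\{p^3_{12}=0\}$, but with a crucial shift in grading: now $X_{\omega_1},X_{\pi_{22}},X_{p^3_{12}},X_{p^3_{21}}$ span $\mathfrak g_{-1}=\hat D$, while the vectors $X_{\omega_2}$ and $X_{\pi_{11}}$ (dual to $\varpi_{\omega_2},\varpi_{\pi_{11}}$) now sit in $\mathfrak g_{-2}=\partial\hat D/\hat D$. The $\Sigma_1$-structure equations in the proof of Proposition \ref{hyp-derived} then produce the asserted relations $[X_{p^3_{12}},X_{\pi_{22}}]=X_{\omega_2}$, $[X_{p^3_{21}},X_{\omega_1}]=X_{\pi_{11}}$, $[X_{\pi_{11}},X_{\omega_1}]=X_1$, $[X_{\pi_{22}},X_{\omega_2}]=X_2$ and $[X_1,X_{\omega_1}]=X_0$; the absence of a companion relation $[X_2,X_{\omega_2}]=X_0$ reflects precisely the degree shift, since this bracket would land in $\mathfrak g_{-5}=0$. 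The main obstacle is the bookkeeping verification that every bracket not listed above vanishes in the associated graded; via the duality formula this reduces to reading off coefficients of $d\varpi_\bullet$ in the wedge basis, and together with the degree constraint $\mathfrak g_p=0$ for $p\le -5$ it forces all remaining relations to be trivial, completing the identification of $(\mathfrak m_i(w),[\,,\,])$ with the abstract graded Lie algebras $\mathfrak m_i$.
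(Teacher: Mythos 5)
Your overall strategy is exactly the paper's: take the structure equations of the derived flag computed in Proposition \ref{hyp-derived}, choose a coframe adapted to the filtration, and read off the graded brackets of the dual frame via $d\alpha(X,Y)=X\alpha(Y)-Y\alpha(X)-\alpha([X,Y])$, treating $\Sigma_{0}$ in the chart $U_{\omega_{1}\omega_{2}}$ and $\Sigma_{1}$ in the chart $U_{\omega_{1}\pi_{22}}$ along $\{p_{12}^{3}=0\}$ with the degree shift you describe. However, there is a concrete flaw in the last step as you wrote it. You complete the coframe with the raw differentials $dp_{11}^{1},\ dp_{22}^{1}$ (resp.\ $dp_{12}^{3},\ dp_{21}^{3}$), but the structure equations in Proposition \ref{hyp-derived} are
$d\varpi_{\pi_{11}}\equiv \omega_{1}\wedge(dp_{11}^{1}+f\omega_{2})$ and
$d\varpi_{\pi_{22}}\equiv \omega_{2}\wedge(dp_{22}^{1}+g\omega_{1})$
modulo the annihilator of $\hat D$, where $f,g$ are in general nonvanishing functions (they come from expressing $d\pi_{11}, d\pi_{22}, d\omega_{1}, d\omega_{2}$, which are pulled back from $R$, on integral elements). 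Hence, expressed in \emph{your} coframe, $d\varpi_{\pi_{11}}$ contains the extra term $f\,\omega_{1}\wedge\omega_{2}$, so for your dual frame $[X_{\omega_{1}},X_{\omega_{2}}]\equiv -f(w)X_{\pi_{11}}+g(w)X_{\pi_{22}}$ in $\mathfrak g_{-2}$, which need not vanish; your assertion that ``all remaining brackets vanish because the corresponding wedge coefficients are zero'' is therefore false for the frame you chose, and the same issue recurs on $U_{\omega_{1}\pi_{22}}$.

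The gap is easily repaired, and the repair is precisely the device used in the paper: complete the coframe with the corrected forms $\varpi_{p_{11}^{1}}:=dp_{11}^{1}+f\omega_{2}$, $\varpi_{p_{22}^{1}}:=dp_{22}^{1}+g\omega_{1}$ on $U_{\omega_{1}\omega_{2}}$ (and $\varpi_{p_{12}^{3}}:=dp_{12}^{3}+f\omega_{1}$, $\varpi_{p_{21}^{3}}:=dp_{21}^{3}+g\pi_{22}$ on $U_{\omega_{1}\pi_{22}}$), so that the structure equations have \emph{only} the listed wedge terms and the unwanted structure coefficients are honestly zero; equivalently, keep your frame and afterwards perform the $\mathfrak g_{-1}$-level change of basis $X_{\omega_{2}}\mapsto X_{\omega_{2}}-fX_{p_{11}^{1}}$, $X_{\omega_{1}}\mapsto X_{\omega_{1}}-gX_{p_{22}^{1}}$ (and its analogue on $\Sigma_{1}$). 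With that correction, your computation of the listed brackets and your degree-count argument (including the observation that $[X_{2},X_{\omega_{2}}]$ would lie in $\mathfrak g_{-5}=0$ on $\Sigma_{1}$) goes through and coincides with the paper's proof.
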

\begin{proof}
We first show that $\mathfrak m_{0}(w)\cong \mathfrak m_0$. 
On $U_{\omega_{1}\omega_{2}}$ in the proof of Proposition \ref{hyp-derived}, 
we set $\varpi_{p_{11}^{1}}:=d{p_{11}^{1}}+f\omega_{2}
,\ \varpi_{p_{22}^{1}}:=d{p_{22}^{1}}+g\omega_{1}$ and 
take a coframe:\\  
$\left\{\varpi_{0},\ \varpi_{1},\ \varpi_{2},\ \varpi_{\pi_{11}},\ \varpi_{\pi_{22}},\ 
\omega_{1},\ \omega_{2},\ \varpi_{p_{11}^{1}},\ \varpi_{p_{22}^{1}} \right\}$, 
then the structure equations are given by 
\begin{align*}
d\varpi_{i}&\equiv 0 \hspace{0.5cm}(i=0,1,2)\hspace{0.5cm} \quad
\mod\ \varpi_{0},\ \varpi_{1},\ \varpi_{2},\ \varpi_{\pi_{11}},\ \varpi_{\pi_{22}},\\
d\varpi_{\pi_{11}}&\equiv \omega_{1}\wedge \varpi_{p_{11}^{1}} \hspace{2cm}  
\mod\ \varpi_{0},\ \varpi_{1},\ \varpi_{2},\ \varpi_{\pi_{11}},\ \varpi_{\pi_{22}},\\
d\varpi_{\pi_{22}}&\equiv \omega_{2}\wedge \varpi_{p_{22}^{1}} \hspace{2cm}  
\mod\ \varpi_{0},\ \varpi_{1},\ \varpi_{2},\ \varpi_{\pi_{11}},\ \varpi_{\pi_{22}}, 
\end{align*} 
\begin{align*}
d\varpi_{0}&\equiv 0 \hspace{4.5cm}  \mod\ \varpi_{0},\ \varpi_{1},\ \varpi_{2},\\
d\varpi_{1}&\equiv \omega_{1}\wedge \varpi_{\pi_{11}} \hspace{3cm}  
\mod\ \varpi_{0},\ \varpi_{1},\ \varpi_{2}, \varpi_{\pi_{11}}\wedge \varpi_{\pi_{22}},\\
d\varpi_{2}&\equiv \omega_{2}\wedge \varpi_{\pi_{22}} \hspace{3cm}  
\mod\ \varpi_{0},\ \varpi_{1},\ \varpi_{2}, \varpi_{\pi_{11}}\wedge \varpi_{\pi_{22}}. 
\end{align*}
\begin{align*}
d\varpi_{0}\equiv \omega_{1}\wedge \varpi_{1}+\omega_{2}\wedge \varpi_{2}
  \ \mod\ &  \varpi_{0},\ \varpi_{1}\wedge \varpi_{2},\ \varpi_{1}\wedge \varpi_{\pi_{11}},\      
 \varpi_{1}\wedge \varpi_{\pi_{22}},\\ & \varpi_{2}\wedge \varpi_{\pi_{11}}, \varpi_{2}\wedge \varpi_{\pi_{22}},  
\varpi_{\pi_{11}}\wedge \varpi_{\pi_{22}}.
\end{align*}
We take the dual frame 
$\left\{X_{0},\ X_{1},\ X_{2},\ X_{\pi_{11}},\ X_{\pi_{22}},\ 
X_{\omega_{1}},\ X_{\omega_{2}},\ X_{p_{11}^{1}},\ X_{p_{22}^{1}} \right\}$, 
and set \\
$[X_{\omega_{1}}, X_{p_{11}^{1}}]=A_{11}X_{\pi_{11}}+A_{22}X_{\pi_{22}},\
 (A_{ii}\in \mathbb R).$ 
Then we have 
\begin{align*}
d\varpi_{\pi_{11}}(X_{\omega_{1}}, X_{p_{11}^{1}})&
=X_{\omega_{1}}\varpi_{\pi_{11}}(X_{p_{11}^{1}})-X_{p_{11}^{1}}\varpi_{\pi_{11}}(X_{\omega_{1}})
 -\varpi_{\pi_{11}}([X_{\omega_{1}}, X_{p_{11}^{1}}]),\\
 &=-\varpi_{\pi_{11}}([X_{\omega_{1}}, X_{p_{11}^{1}}])=-A_{11}. 
\end{align*}
On the other hand, we have  
\begin{align*}
d\varpi_{\pi_{11}}(X_{\omega_{1}}, X_{p_{11}^{1}})&
=\omega_{1}(X_{\omega_{1}})\varpi_{p_{11}^{1}}(X_{p_{11}^{1}})
-\varpi_{p_{11}^{1}}(X_{\omega_{1}})\omega_{1}(X_{p_{11}^{1}})=1.
\end{align*}
Therefore $A_{11}=-1$. From the same argument for $d\varpi_{\pi_{22}}$, we get $A_{22}=0$. 
Hence we have 
$[X_{\omega_{1}}, X_{p_{11}^{1}}]=-X_{\pi_{11}}.$ 
The other brackets are left to reader. 
Hence its dual frame satisfies the relation with respect to the algebra $\mathfrak m_{0}$.\par 
Next, we show that the isomorphism $\mathfrak m_{1}(w)\cong \mathfrak m_{1}$. 
On $U_{\omega_{1}\pi_{22}}$ in the proof of Proposition \ref{hyp-derived}, 
we set $\varpi_{p_{12}^{3}}:=d{p_{12}^{3}}+f\omega_{1}
,\ \varpi_{p_{21}^{3}}:=d{p_{21}^{3}}+g\pi_{22}$, and 
take a coframe  and  its dual frame
$\left\{\varpi_{0}, \varpi_{1}, \varpi_{2}, \varpi_{\omega_{2}}, 
\varpi_{\pi_{11}}, \omega_{1}, \pi_{22}, \varpi_{p_{12}^{3}}, 
\varpi_{p_{21}^{3}} \right\},$\   
$\left\{X_{0}, X_{1}, X_{2}, X_{\omega_{2}}, X_{\pi_{11}},    
X_{\omega_{1}}, X_{\pi_{22}}, X_{p_{12}^{3}}, X_{p_{21}^{3}} \right\}.$  
From the proof of Proposition \ref{hyp-derived}, the structure equations at a point on $\left\{p^{3}_{12}=0\right\}$ are  
\begin{align*}
d\varpi_{i}&\equiv 0 \hspace{0.5cm}(i=0,1,2)\hspace{0.9cm} \quad
\mod\ \varpi_{0},\ \varpi_{1},\ \varpi_{2},\ \varpi_{\omega_{2}},\ \varpi_{\pi_{11}},\\
d\varpi_{\omega_{2}}&\equiv \pi_{22}\wedge \varpi_{p_{12}^{3}} \hspace{2.2cm}  
\mod\ \varpi_{0},\ \varpi_{1},\ \varpi_{2},\ \varpi_{\omega_{2}},\ \varpi_{\pi_{11}},\\
d\varpi_{\pi_{11}}&\equiv \omega_{1}\wedge \varpi_{p_{21}^{3}} \hspace{2.3cm}
\mod\ \varpi_{0},\ \varpi_{1},\ \varpi_{2},\ \varpi_{\omega_{2}},\ \varpi_{\pi_{11}}, 
\end{align*}
\begin{align*}
d\varpi_{0}&\equiv 0 \hspace{4.5cm}  \mod\ \varpi_{0},\ \varpi_{1},\ \varpi_{2},\\
d\varpi_{1}&\equiv \omega_{1}\wedge \varpi_{\pi_{11}} \hspace{3cm}  
\mod\ \varpi_{0},\ \varpi_{1},\ \varpi_{2}, \varpi_{\omega_{2}}\wedge \varpi_{\pi_{11}},\\
d\varpi_{2}&\equiv \varpi_{\omega_{2}}\wedge {\pi_{22}} \hspace{3cm}  
\mod\ \varpi_{0},\ \varpi_{1},\ \varpi_{2}, \varpi_{\omega_{2}}\wedge \varpi_{\pi_{11}}. 
\end{align*}
\begin{align*}
d\varpi_{0}\equiv \omega_{1}\wedge \varpi_{1}\ 
\mod\ & \varpi_{0},\ \varpi_{1}\wedge \varpi_{2},\ 
\varpi_{1}\wedge \varpi_{\omega_{2}},\ \varpi_{1}\wedge \varpi_{\pi_{11}},\\ 
& \varpi_{2}\wedge \varpi_{\omega_{2}},\ \varpi_{2}\wedge \varpi_{\pi_{11}},\ 
\varpi_{\omega_{2}}\wedge \varpi_{\pi_{11}}.
\end{align*}
Thus we obtain the statement for $\mathfrak m_{1}$ from the same argument of the proof of 
$\mathfrak m_{0}$. 
\end{proof}
In the rest of this hyperbolic case, we calculate the symbol algebra at a point $w$ 
in $\Sigma_{2}$. From Proposition \ref{hyp-derived},  
$D$ is not weakly regular around $w\in \Sigma_{2}$. 
Hence, at the point $w$, we can not define the symbol algebra in the sense of Tanaka. 
However, by taking the following filtration $F$ on $\Sigma(R)$, 
we can define the symbol algebra $\mathfrak m_{2}(w)$ of $(\Sigma(R), F)$ at $w\in \Sigma_{2}$. 
We set 
$F^{-4}(w)=T_{w}(\Sigma(R)),\ F^{-3}(w)=\partial ^{(2)}D(w),\   
F^{-2}(w)=\partial D(w),\ F^{-1}(w)=D(w)$,
where $w\in \Sigma(R)$. 
Then, $\left\{F^{p}\right\}$ defines the filtration on $\Sigma(R)$. 
For $w\in \Sigma_{2}$, we set 
$\mathfrak g_{-1}(w):=F^{-1}(w)=D(w)$, $\mathfrak g_{-2}(w):=F^{-2}(w)/F^{-1}(w)$, 
$\mathfrak g_{-3}(w):=F^{-3}(w)/F^{-2}(w)$, $\mathfrak g_{-4}(w):=T_{w}(\Sigma(R))/F^{-3}(w)$, and  
$$\mathfrak m_{2}(w)=\mathfrak g_{-1}(w)\oplus \mathfrak g_{-2}(w)\oplus
\mathfrak g_{-3}(w)\oplus \mathfrak g_{-4}(w).$$ 
The way of the definition of the above symbol algebra in the sense of Morimoto 
coincides with the usual symbol algebra except for $[\mathfrak g_{-1}, \mathfrak g_{-3}]$.
\begin{proposition}\label{hyp-symbol2}
For $w\in \Sigma_{2}$, the symbol algebra $\mathfrak m_{2}(w)$ is 
isomorphic to $\mathfrak m_{2}$, where   
$\mathfrak m_{2}=\mathfrak g_{-4}\oplus\mathfrak g_{-3}\oplus\mathfrak g_{-2}\oplus\mathfrak g_{-1},$ 
whose bracket relations are given by 
$$[X_{p_{11}^{6}},\ X_{\pi_{11}}]=X_{\omega_{1}},\ 
[X_{p_{22}^{6}},\ X_{\pi_{22}}]=X_{\omega_{2}},\ 
[X_{\pi_{11}},\ X_{\omega_{1}}]=X_{1},\ 
[X_{\pi_{22}},\ X_{\omega_{2}}]=X_{2},$$
and the other brackets are trivial.\\
Here $\left\{X_{0},\ X_{1},\ X_{2},\ X_{p_{11}^{6}},\ 
X_{p_{22}^{6}},\ X_{\omega_{1}},\ X_{\omega_{2}},\ 
X_{\pi_{11}},\ X_{\pi_{22}}\right\}$ is a basis of $\mathfrak m_{2}$ and 
\begin{align*}
\mathfrak g_{-1}=\left\{X_{\pi_{11}},\ X_{\pi_{22}},\ X_{p_{11}^{6}},\ X_{p_{22}^{6}}\right\},\ 
\mathfrak g_{-2}=\left\{X_{\omega_{1}},\ X_{\omega_{2}}\right\},\ 
\mathfrak g_{-3}=\left\{X_{1},\ X_{2}\right\},\ 
\mathfrak g_{-4}=\left\{X_{0}\right\}.
\end{align*}
\end{proposition}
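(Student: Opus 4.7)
The plan is to mirror the proof of Proposition \ref{hyp-symbol1}, adapted to the Morimoto filtration $F$, since $\hat D$ is not weakly regular on $\Sigma_2$. First I would work in the chart $U_{\pi_{11}\pi_{22}}$ around $w\in \Sigma_2=\{p_{11}^{6}=p_{22}^{6}=0\}$, and use the structure equations already obtained in the proof of Proposition \ref{hyp-derived} to set
\[
\varpi_{p_{11}^{6}}:=dp_{11}^{6}+f\pi_{22},\qquad \varpi_{p_{22}^{6}}:=dp_{22}^{6}+g\pi_{11}.
\]
Then $\{\varpi_{0},\varpi_{1},\varpi_{2},\varpi_{\omega_{1}},\varpi_{\omega_{2}},\pi_{11},\pi_{22},\varpi_{p_{11}^{6}},\varpi_{p_{22}^{6}}\}$ is a coframe on $\Sigma(R)$ adapted to $F$: the first $k$ forms annihilate $F^{-(5-k)}$ in the obvious way. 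The dual frame is the list of vectors in the statement, assembled into the graded pieces $\mathfrak g_{-1}=\{X_{\pi_{11}},X_{\pi_{22}},X_{p_{11}^{6}},X_{p_{22}^{6}}\}$, $\mathfrak g_{-2}=\{X_{\omega_{1}},X_{\omega_{2}}\}$, $\mathfrak g_{-3}=\{X_{1},X_{2}\}$, $\mathfrak g_{-4}=\{X_{0}\}$, with total dimension $9=\dim \Sigma(R)$.

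Next I would read off the non-trivial brackets from the structure equations in Proposition \ref{hyp-derived} via the Cartan formula $d\theta(X,Y)=X\theta(Y)-Y\theta(X)-\theta([X,Y])$, exactly as in the proof of Proposition \ref{hyp-symbol1}. The congruences $d\varpi_{\omega_{1}}\equiv \pi_{11}\wedge\varpi_{p_{11}^{6}}$ and $d\varpi_{\omega_{2}}\equiv \pi_{22}\wedge\varpi_{p_{22}^{6}}$ modulo $(\hat D)^{\perp}$ yield $[X_{p_{11}^{6}},X_{\pi_{11}}]=X_{\omega_{1}}$ and $[X_{p_{22}^{6}},X_{\pi_{22}}]=X_{\omega_{2}}$, while $d\varpi_{1}\equiv \varpi_{\omega_{1}}\wedge\pi_{11}$ and $d\varpi_{2}\equiv \varpi_{\omega_{2}}\wedge\pi_{22}$ modulo $(\partial\hat D)^{\perp}$ and $\varpi_{\omega_{1}}\wedge\varpi_{\omega_{2}}$ yield $[X_{\pi_{11}},X_{\omega_{1}}]=X_{1}$ and $[X_{\pi_{22}},X_{\omega_{2}}]=X_{2}$. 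All remaining brackets between basis vectors whose weights sum to $-2$ or $-3$ vanish because the corresponding monomials are absent from the right-hand sides.

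Finally I must check that every bracket landing in $\mathfrak g_{-4}$ is trivial. The bracket $[X_{\omega_{1}},X_{\omega_{2}}]\in \mathfrak g_{-2}\oplus\mathfrak g_{-2}\to\mathfrak g_{-4}$ vanishes because Proposition \ref{hyp-derived} gives $[\mathcal F^{-2},\mathcal F^{-2}]\subset \partial^{2}\hat D=\partial^{(2)}\hat D=F^{-3}$, so $\varpi_{0}([X_{\omega_{1}},X_{\omega_{2}}])=0$ in spite of the appearance of $\varpi_{\omega_{1}}\wedge\varpi_{\omega_{2}}$ in the modulo list for $d\varpi_{0}$. The brackets of type $[\mathfrak g_{-1},\mathfrak g_{-3}]\to\mathfrak g_{-4}$ vanish by the equality $\partial^{(3)}\hat D=\partial^{(2)}\hat D$ on $\Sigma_{2}$, which forces $[\mathcal F^{-1},\mathcal F^{-3}]\subset F^{-3}$; equivalently, $d\varpi_{0}$ contains no $\pi_{ii}\wedge\varpi_{j}$ or $\varpi_{p_{ii}^{6}}\wedge\varpi_{j}$ terms.

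The main obstacle is conceptual rather than computational: one must verify that $F$ really defines a filtered manifold in Morimoto's sense, i.e.\ that $[\mathcal F^{p},\mathcal F^{q}]\subset \mathcal F^{p+q}$ for all admissible negative $p,q$, and that the resulting graded object is coordinate-independent. Both follow at once from the derived-flag identities collected in Proposition \ref{hyp-derived}. The notable feature of $\mathfrak m_{2}$ is then that the generating condition $[\mathfrak g_{p},\mathfrak g_{-1}]=\mathfrak g_{p-1}$ fails at $p=-3$, so that $X_{0}$ is isolated from the brackets; this is exactly what distinguishes $\Sigma_{2}$ from $\Sigma_{0}\cup\Sigma_{1}$ at the infinitesimal level.
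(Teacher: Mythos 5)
Your proposal is correct and takes essentially the same route as the paper: work in the chart $U_{\pi_{11}\pi_{22}}$ with the adapted coframe $\{\varpi_{0},\varpi_{1},\varpi_{2},\varpi_{\omega_{1}},\varpi_{\omega_{2}},\pi_{11},\pi_{22},\varpi_{p_{11}^{6}},\varpi_{p_{22}^{6}}\}$, invoke the structure equations at points of $\{p_{11}^{6}=p_{22}^{6}=0\}$ from Proposition \ref{hyp-derived}, and read off the brackets of the dual frame via the Cartan formula exactly as in the proof of Proposition \ref{hyp-symbol1}. Your explicit check that all brackets into $\mathfrak g_{-4}$ vanish (using $\partial^{2}\hat D=\partial^{(2)}\hat D$ and $\partial^{(3)}\hat D=\partial^{(2)}\hat D$ on $\Sigma_{2}$) merely spells out what the paper compresses into ``the same argument.''
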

\begin{proof}
On $U_{\pi_{11}\pi_{22}}$ in the proof of Proposition \ref{hyp-derived}, 
we set $\varpi_{p_{11}^{6}}:=d{p_{11}^{6}}+f\pi_{22}
,\ \varpi_{p_{22}^{6}}:=d{p_{22}^{6}}+g\pi_{11}$ and 
take a coframe: \\
$\left\{\varpi_{0},\ \varpi_{1},\ \varpi_{2},\ \varpi_{\omega_{1}},\ \varpi_{\omega_{2}},\ 
\pi_{11},\ \pi_{22},\ \varpi_{\pi_{11}^{6}},\ \varpi_{\pi_{22}^{6}} \right\}$ and its  
dual frame:\\ 
$\left\{X_{0},\ X_{1},\ X_{2},\ X_{\omega_{1}},\ X_{\omega_{2}},\ 
X_{\pi_{11}},\ X_{\pi_{22}},\ X_{p_{11}^{6}},\ X_{p_{22}^{6}} \right\}$.    
From the proof of Proposition \ref{hyp-derived}, the structure equations at a point on 
$\left\{p^{6}_{11}=p^{6}_{22}=0\right\}$ are 
\begin{align*}
d\varpi_{i}&\equiv 0 \hspace{0.5cm}(i=0,1,2)\hspace{0.7cm} \quad
\mod\ \varpi_{0},\ \varpi_{1},\ \varpi_{2},\ \varpi_{\omega_{1}},\ \varpi_{\omega_{2}},\\
d\varpi_{\omega_{1}}&\equiv \pi_{11}\wedge \varpi_{p_{11}^{6}} \hspace{2cm} 
\mod\ \varpi_{0},\ \varpi_{1},\ \varpi_{2},\ \varpi_{\omega_{1}},\ \varpi_{\omega_{2}},\\
d\varpi_{\omega_{2}}&\equiv \pi_{22}\wedge \varpi_{p_{22}^{6}} \hspace{2cm}
\mod\ \varpi_{0},\ \varpi_{1},\ \varpi_{2},\ \varpi_{\omega_{1}},\ \varpi_{\omega_{2}}.
\end{align*}
\begin{align*}
d\varpi_{0}&\equiv 0 \hspace{4cm} \quad \mod\ \varpi_{0},\ \varpi_{1},\ \varpi_{2},\\
d\varpi_{1}&\equiv \varpi_{\omega_{1}}\wedge {\pi_{11}} \hspace{3cm}  
\mod\ \varpi_{0},\ \varpi_{1},\ \varpi_{2}, \varpi_{\omega_{1}}\wedge \varpi_{\omega_{2}},\\
d\varpi_{2}&\equiv \varpi_{\omega_{2}}\wedge {\pi_{22}} \hspace{3cm}  
\mod\ \varpi_{0},\ \varpi_{1},\ \varpi_{2}, \varpi_{\omega_{1}}\wedge \varpi_{\omega_{2}}. 
\end{align*}
\begin{align*}
d\varpi_{0}\equiv 0\ 
\mod\ & \varpi_{0},\ \varpi_{1}\wedge \varpi_{2},\ 
\varpi_{1}\wedge \varpi_{\omega_{1}},\ \varpi_{1}\wedge \varpi_{\omega_{2}},\\  
 & \varpi_{2}\wedge \varpi_{\omega_{1}},\ \varpi_{2}\wedge \varpi_{\omega_{2}},\ 
\varpi_{\omega_{1}}\wedge \varpi_{\omega_{2}}.
\end{align*}
Thus we have the assertion by the same argument 
in the proof of Proposition \ref{hyp-symbol1}. 
\end{proof}
\noindent{\bf Structures of rank 2 prolongations for parabolic equations.} 
Let $(R,D)$ be a locally parabolic equation, and $(\Sigma(R),\hat D)$
be the rank 2 prolongation. We use the geometric decomposition (\ref{decomposition1}) of  $\Sigma(R)$  which is similar to the hyperbolic case. 
From Lemma \ref{par-lem}, locally, we have 
$\Sigma_{0}|_{p^{-1}(U)}=P_{\omega_{1}\omega_{2}},\ 
\Sigma_{1}|_{p^{-1}(U)}=P_{\omega_{1}\pi_{22}}\backslash P_{\omega_{1}\omega_{2}}$, and 
$\Sigma_{2}|_{p^{-1}(U)}=P_{\pi_{12}\pi_{22}}\backslash 
(P_{\omega_{1}\omega_{2}}\cup P_{\omega_{1}\pi_{22}})$,  
where $p$ is the projection of the fibration $\Sigma(R) \to R$. 
The set $\Sigma_{0}$ is an open set in $\Sigma(R)$, and is a $\mathbb{R}^2$--bundle over $R$. 
The set $\Sigma_1$ is a submanifold in $J(D,2)$ and contains singular points of $\Sigma(R)$ in $J(D,2)$ and is a $\mathbb{R}$-bundle over $R$. The set $\Sigma_{2}$ is codimension 2 submanifold in $\Sigma(R)$, and is a section of $\Sigma(R) \to R$. 
\begin{remark}
From now on, we examine the geometric structures of $(\Sigma(R), \hat D)$ 
on a domain except for singular points in $\Sigma_{1}$.  
\end{remark}
\begin{proposition}\label{par-derived}
The differential system $\hat D$ on $\Sigma(R)$ is regular, but 
is not weakly regular. More precisely, we obtain that 
$\hat D\subset \partial \hat D\subset \partial^{2}\hat D \subset 
\partial^{3}\hat D=T\Sigma(R).$ 
Moreover, we have $\partial^{2}\hat D=\partial^{(2)}\hat D$,  
$\partial^{(3)}\hat D=T\Sigma(R)\ \ {\rm on}\ 
\Sigma_{0}\cup\Sigma_{1}$, and  
$\partial^{(3)}\hat D=\partial^{(2)}\hat D\ \ {\rm on}\ \Sigma_{2}$.  
\end{proposition}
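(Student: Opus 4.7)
The plan is to mirror the argument for Proposition \ref{hyp-derived}, working stratum by stratum on the decomposition $\Sigma(R)=\Sigma_{0}\cup\Sigma_{1}\cup\Sigma_{2}$ provided by Lemma \ref{par-lem}, and using the adapted Grassmann charts $U_{\omega_{1}\omega_{2}}$, $U_{\omega_{1}\pi_{22}}$, $U_{\pi_{12}\pi_{22}}$ from the proof of Lemma \ref{par-pro}. On each chart, I would solve the integrability conditions $f_{1}=f_{2}=0$ explicitly so that the two surviving inhomogeneous coordinates become fiber coordinates, write the canonical system $\hat D$ as the annihilator of five 1-forms of the type $\varpi_{0},\varpi_{1},\varpi_{2}$ together with two substitution 1-forms (e.g.\ on $U_{\omega_{1}\omega_{2}}$, $\varpi_{\pi_{12}}:=\pi_{12}-p^{1}_{12}\omega_{2}$ and $\varpi_{\pi_{22}}:=\pi_{22}-p^{1}_{12}\omega_{1}-p^{1}_{22}\omega_{2}$, using $p^{1}_{11}=0$ and $p^{1}_{21}=p^{1}_{12}$). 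Differentiating these substitution 1-forms modulo $\hat D$ using the parabolic structure equations (\ref{parequ}) then determines $\partial\hat D$.

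On $\Sigma_{0}$, the picture should be exactly parallel to the hyperbolic case: $\partial\hat D=p_{*}^{-1}(D)=\{\varpi_{0}=\varpi_{1}=\varpi_{2}=0\}$, and the two new 2-forms appearing in $d\varpi_{1}$ and $d\varpi_{2}$ are independent modulo the products $\varpi_{\pi_{12}}\wedge\varpi_{\pi_{22}}$, giving $\partial^{2}\hat D=\partial^{(2)}\hat D=\{\varpi_{0}=0\}$; finally $d\varpi_{0}\equiv\omega_{1}\wedge\varpi_{1}+\omega_{2}\wedge\varpi_{2}$ is nonzero modulo $\varpi_{0}$ together with all wedge products of the other 1-forms, so $\partial^{(3)}\hat D=T\Sigma(R)$. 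On $\Sigma_{1}\subset U_{\omega_{1}\pi_{22}}$, I would restrict to the locus $p^{3}_{12}=0$ dictated by $\omega_{2}|_{w}\wedge\omega_{1}|_{w}=0$, introduce the obvious substitution 1-forms $\varpi_{\omega_{2}},\varpi_{\pi_{12}}$, and check that the same three-step derived tower results in $\partial^{(3)}\hat D=T\Sigma(R)$; the key point is that even after switching the adapted frame, one of $d\varpi_{1},d\varpi_{2}$ still contains an independent $\omega_{1}\wedge\pi_{22}$-type term that survives into $d\varpi_{0}$.

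The substantive case is $\Sigma_{2}$, where I would work in $U_{\pi_{12}\pi_{22}}$ with the substitution forms $\varpi_{\omega_{1}}:=\omega_{1}-p^{6}_{11}\pi_{12}-p^{6}_{12}\pi_{22}$ and $\varpi_{\omega_{2}}:=\omega_{2}-p^{6}_{21}\pi_{12}-p^{6}_{22}\pi_{22}$ subject to the defining relations $f_{1}=p^{6}_{22}=0$ and $f_{2}=p^{6}_{21}-p^{6}_{12}=0$, evaluated at the central point $p^{6}_{11}=p^{6}_{12}=p^{6}_{21}=p^{6}_{22}=0$ (the unique point of $\Sigma_{2}$ in the fiber). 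The derived calculations up to $\partial^{2}\hat D=\partial^{(2)}\hat D=\{\varpi_{0}=0\}$ proceed as before. The crucial calculation is
\[
d\varpi_{0}\equiv \omega_{1}\wedge\varpi_{1}+\omega_{2}\wedge\varpi_{2}\equiv \varpi_{\omega_{1}}\wedge\varpi_{1}+\varpi_{\omega_{2}}\wedge\varpi_{2}\quad(\mathrm{mod}\ p^{6}_{ij}=0),
\]
since the substitutions for $\omega_{1},\omega_{2}$ reduce to pure $\varpi_{\omega_{i}}$ at points of $\Sigma_{2}$. This must be shown to lie in the ideal generated by $\varpi_{0}$ together with the pairwise wedges of $\varpi_{1},\varpi_{2},\varpi_{\omega_{1}},\varpi_{\omega_{2}}$, which forces $\partial^{(3)}\hat D=\partial^{(2)}\hat D$.

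The main obstacle is precisely this last collapse on $\Sigma_{2}$: the degeneracy of the parabolic symbol removes the $\pi_{11}$-direction, so the two obstruction 2-forms $\omega_{1}\wedge\varpi_{1}$ and $\omega_{2}\wedge\varpi_{2}$ are no longer complemented by an independent transverse form (as $\omega_{1}\wedge\pi_{11}$ and $\omega_{2}\wedge\pi_{22}$ were in the hyperbolic case). Verifying the vanishing requires keeping careful track of the $p^{6}_{ij}$-corrections in the substitution 1-forms and showing that, at $p^{6}_{ij}=0$, every new 2-form produced by a bracket already lies in $(\partial^{(2)}\hat D)^{\perp}\wedge\Omega^{1}+\wedge^{2}(\partial^{(2)}\hat D)^{\perp}$ in the sense of the annihilator formula for weak derived systems recalled in Section 3; this is where the singular stratum $\Sigma_{2}$ shows up as a genuine obstruction to weak regularity, distinguishing parabolic from hyperbolic.
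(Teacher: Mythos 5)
Your proposal is correct and follows essentially the same method as the paper: stratify $\Sigma(R)=\Sigma_0\cup\Sigma_1\cup\Sigma_2$, work in an adapted Grassmann chart on each stratum, solve the defining equations so that two inhomogeneous coordinates remain, and compute the derived and weak derived towers from the parabolic structure equation; your $\Sigma_0$ and $\Sigma_2$ computations (in particular $d\varpi_0\equiv\varpi_{\omega_1}\wedge\varpi_1+\varpi_{\omega_2}\wedge\varpi_2\equiv 0$ modulo wedges of annihilator forms at $p^6_{11}=p^6_{12}=0$, forcing $\partial^{(3)}\hat D=\partial^{(2)}\hat D$) are exactly the paper's.

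The one genuine divergence is the chart used for $\Sigma_1$. The paper works in $U_{\pi_{12}\pi_{22}}$, where the defining equations of $\Sigma(R)$ are linear ($p^6_{22}=0$, $p^6_{21}=p^6_{12}$), the fiber coordinates are simply $(p^6_{11},p^6_{12})$, $\Sigma_1$ is the locus $p^6_{11}\neq 0$, $p^6_{12}=0$, and the decisive relation is $d\varpi_0\equiv p^6_{11}\,\pi_{12}\wedge\varpi_1$, nonzero precisely because $p^6_{11}\neq 0$ (after rescaling $\varpi_0'=\varpi_0/p^6_{11}$). You instead propose $U_{\omega_1\pi_{22}}$, where the defining equations are $p^3_{11}+p^3_{22}=0$ and $p^3_{11}p^3_{22}=p^3_{12}p^3_{21}$; this works, but note two points you gloss over. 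First, this chart contains the singular point of the fiber ($p^3_{11}=p^3_{22}=p^3_{12}=p^3_{21}=0$), where $\Sigma(R)$ is not smooth and $\hat D$ is undefined; the proposition (per the remark preceding it in the paper) is asserted only off the singular set, and in your chart this means you must restrict to $p^3_{21}\neq 0$, which is also what lets you solve $p^3_{12}=-(p^3_{11})^2/p^3_{21}$ and take $(p^3_{11},p^3_{21})$ as fiber coordinates. Second, the nondegeneracy along the tower really uses $p^3_{21}\neq 0$: at a $\Sigma_1$ point one finds $d\varpi_1\equiv p^3_{21}\,\varpi_{\omega_2}\wedge\omega_1$, so the step $\partial^{(2)}\hat D=\{\varpi_0=0\}$ and the surviving term $\omega_1\wedge\varpi_1$ in $d\varpi_0$ (which is what gives $\partial^{(3)}\hat D=T\Sigma(R)$, rather than an ``$\omega_1\wedge\pi_{22}$-type term'' as you phrase it) depend on it. With those caveats made explicit, your route on $\Sigma_1$ is equivalent to the paper's; the paper's chart choice simply buys linear defining equations and automatic avoidance of the singular points.
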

\begin{proof}
On each component $\Sigma_{i}$ in the decomposition,  
we calculate the structure equation of $\hat D$. 
First, we consider it on $\Sigma_{0}$. 
The canonical system $\hat D$ on $U_{\omega_{1}\omega_{2}}$ is given by 
$\hat D =\left\{\varpi_{0}=\varpi_{1}=\varpi_{2}
 =\varpi_{\pi_{12}}=\varpi_{\pi_{22}}=0\right\},$ 
where 
$\varpi_{\pi_{12}}:=\pi_{12}-{p_{12}^{1}}\omega_{2},\quad  
\varpi_{\pi_{22}}:=\pi_{22}-{p_{12}^{1}}\omega_{1}-{p_{22}^{1}}\omega_{2}.$  
The structure equation of $\hat D$ on $\Sigma_0$ is written as 
\begin{align*}
d\varpi_{i}&\equiv 0 \hspace{0.5cm}(i=0,1,2)\hspace{0.5cm} \quad
\mod\ \varpi_{0},\ \varpi_{1},\ \varpi_{2},\ \varpi_{\pi_{12}},\ \varpi_{\pi_{22}},\\
d\varpi_{\pi_{12}}&\equiv \omega_{2}\wedge ({dp_{12}^{1}}+f\omega_{1})\quad  
\mod\ \varpi_{0},\ \varpi_{1},\ \varpi_{2},\ \varpi_{\pi_{12}},\ \varpi_{\pi_{22}},\\
d\varpi_{\pi_{22}}&\equiv g\omega_{1}\wedge \omega_{2}-{dp_{12}^{1}}\wedge \omega_{1}
                    -{dp_{22}^{1}}\wedge \omega_{2} \quad 
\mod\ \varpi_{0},\ \varpi_{1},\ \varpi_{2},\ \varpi_{\pi_{12}},\ \varpi_{\pi_{22}}.\\
 &\equiv -(d{p_{12}^{1}}+f\omega_{1})\wedge \omega_{1}
 -(d{p_{22}^{1}}-g\omega_{1})\wedge \omega_{2}.
\end{align*}
where $f$ and $g$ are appropriate functions. Hence we have 
$\partial \hat D=\left\{\varpi_{0}=\varpi_{1}
=\varpi_{2}=0\right\}={p_{*}^{-1}}(D).$ 
The structure equation of $\partial \hat D$ is expressed as 
\begin{align*}
d\varpi_{0}&\equiv 0 \hspace{5cm} \quad \mod\ \varpi_{0},\ \varpi_{1},\ \varpi_{2},\\
d\varpi_{1}&\equiv \omega_{2}\wedge \varpi_{\pi_{12}} \hspace{3.9cm}  
\mod\ \varpi_{0},\ \varpi_{1},\ \varpi_{2}, \varpi_{\pi_{12}}\wedge \varpi_{\pi_{22}},\\
d\varpi_{2}&\equiv \omega_{1}\wedge \varpi_{\pi_{12}}+\omega_{2}\wedge \varpi_{\pi_{22}} \hspace{1.7cm}  
\mod\ \varpi_{0},\ \varpi_{1},\ \varpi_{2}, \varpi_{\pi_{12}}\wedge \varpi_{\pi_{22}},
\end{align*}
Hence we have 
$\partial ^{2}\hat D=\partial ^{(2)}\hat D=\left\{\varpi_{0}=0\right\}.$ 
The structure equation of $\partial^{2} \hat D$ is described by 
\begin{align*}
d\varpi_{0}\equiv \omega_{1}\wedge \varpi_{1}+\omega_{2}\wedge \varpi_{2},\ 
 \mod\ & \varpi_{0},\ \varpi_{1}\wedge \varpi_{2},\ 
\varpi_{1}\wedge \varpi_{\pi_{12}},\ \varpi_{1}\wedge \varpi_{\pi_{22}},\\
& \varpi_{2}\wedge \varpi_{\pi_{12}},\ \varpi_{2}\wedge \varpi_{\pi_{22}}
,\ \varpi_{\pi_{12}}\wedge \varpi_{\pi_{22}}.
\end{align*}
Therefore, we obtain $\partial ^{(3)}\hat D=T\Sigma(R).$ 
Next, we consider on $\Sigma_{1}$. 
It is enough to work on $U_{\pi_{12}\pi_{22}}$ since $\Sigma_{1}\backslash 
\{\textrm{singular\ points}\}$ is covered by $U_{\pi_{12}\pi_{22}}$. 
The canonical system $\hat D$ on $U_{\pi_{12}\pi_{22}}$ is given by 
$\hat D=\left\{\varpi_{0}=\varpi_{1}=\varpi_{2}=
\varpi_{\omega_{1}}=\varpi_{\omega_{2}}=0\right\},$
where 
$\varpi_{\omega_{1}}:=\omega_{1}-{p_{11}^{6}}\pi_{12}-{p_{12}^{6}}\pi_{22},\quad  
\varpi_{\omega_{2}}:=\omega_{2}-{p_{12}^{6}}\pi_{12}.$ 
For $w \in U_{\pi_{12}\pi_{22}}$, $w\in \Sigma_{1}$ 
if and only if $p_{11}^{6}(w)\not=0, p_{12}^{6}(w)=0$. Because, 
$w\in \Sigma_{2}$ is given by the coordinate 
$p_{11}^{6}(w)=0, p_{12}^{6}(w)=0$, and $w\in \Sigma_1 \backslash \Sigma_0$ 
is given by $p_{12}^{6}(w)=0$. Therefore, we calculate the structure equation 
at $w$ in the hypersurface $\left\{p_{11}^{6}\not=0, p_{12}^{6}=0\right\}\subset \Sigma(R)$. 
The structure equation at a point on $\left\{p_{11}^{6}\not=0, p_{12}^{6}=0\right\}$ is 
\begin{align*}
d\varpi_{i}&\equiv 0 \hspace{0.5cm}(i=0,1,2)\hspace{0.5cm} \quad
\mod\ \varpi_{0},\ \varpi_{1},\ \varpi_{2},\ \varpi_{\omega_{1}},\ \varpi_{\omega _{2}},\\
d\varpi_{\omega_{1}}&\equiv \pi_{12}\wedge ({dp_{11}^{6}}+f\pi_{22})+\pi_{22}\wedge 
          ({dp_{12}^{6}}+g\pi_{22})\quad  
            \mod\ \varpi_{0},\ \varpi_{1},\ \varpi_{2},\ \varpi_{\omega_{1}},\ \varpi_{\omega_{2}},\\
d\varpi_{\omega_{2}}&\equiv \pi_{12}\wedge ({dp_{12}^{6}}+g\pi_{22})\quad 
\mod\ \varpi_{0},\ \varpi_{1},\ \varpi_{2},\ \varpi_{\omega_{1}},\ \varpi_{\omega_{2}}.
\end{align*}
where $f$ and $g$ are appropriate functions. Hence we have  
$\partial \hat D=\left\{\varpi_{0}=\varpi_{1}=
\varpi_{2}=0\right\}={p_{*}^{-1}}(D).$ 
The structure equation of $\partial \hat D$ is given by  
\begin{align*}
d\varpi_{0}&\equiv 0 \hspace{4.8cm} \ \mod\ \varpi_{0},\ \varpi_{1},\ \varpi_{2},\\
d\varpi_{1}&\equiv \varpi_{\omega_{2}}\wedge \pi_{12} \hspace{3.5cm}  
\mod\ \varpi_{0},\ \varpi_{1},\ \varpi_{2}, \varpi_{\omega_{1}}\wedge \varpi_{\omega_{2}},\\
d\varpi_{2}&\equiv \varpi_{\omega_{1}}\wedge \pi_{12}+\varpi_{\omega_{2}}\wedge \pi_{22} \hspace{1.3cm}  
\mod\ \varpi_{0},\ \varpi_{1},\ \varpi_{2}, \varpi_{\omega_{1}}\wedge \varpi_{\omega_{2}}. 
\end{align*}
Hence we have 
$\partial ^{2}\hat D=\partial ^{(2)}\hat D=\left\{\varpi_{0}=0\right\}$. 
The structure equation of $\partial^{2} \hat D$ is written as  
\begin{align*}
d\varpi_{0}\equiv p_{11}^{6}\pi_{12}\wedge \varpi_{1}\ 
\mod\ & \varpi_{0},\ \varpi_{1}\wedge \varpi_{2},\ 
\varpi_{1}\wedge \varpi_{\omega_{1}},\ \varpi_{1}\wedge \varpi_{\omega_{2}},\\  
 & \varpi_{2}\wedge \varpi_{\omega_{1}},\ \varpi_{2}\wedge \varpi_{\omega_{2}},\ 
\varpi_{\omega_{1}}\wedge \varpi_{\omega_{2}}.
\end{align*}
Here, if we set $\varpi_{0}^{\prime}:=\varpi_{0}/{p_{11}^{6}}$, 
then structure equation is rewritten in the form:
\begin{align*}
d\varpi_{0}^{\prime}\equiv \pi_{12}\wedge \varpi_{1}\ 
\mod\ & \varpi_{0}^{\prime},\ \varpi_{1}\wedge \varpi_{2},\ 
\varpi_{1}\wedge \varpi_{\omega_{1}},\ \varpi_{1}\wedge \varpi_{\omega_{2}},\\ 
 & \varpi_{2}\wedge \varpi_{\omega_{1}},\ \varpi_{2}\wedge \varpi_{\omega_{2}},\ 
\varpi_{\omega_{1}}\wedge \varpi_{\omega_{2}}.
\end{align*}
Hence we have $\partial ^{(3)}\hat D=T\Sigma(R).$ 
Finally, we consider on $\Sigma_2$. 
We use the coordinate on $U_{\pi_{12}\pi_{22}}$. 
For $w \in U_{\pi_{12}\pi_{22}}$, $w\in \Sigma_{2}$ 
if and only if $p_{11}^{6}(w)=p_{12}^{6}(w)=0$. 
Therefore, we calculate the structure equation 
at $w$ in the codimension 2 submanifold $\left\{p_{11}^{6}=p_{12}^{6}=0\right\}\subset \Sigma(R)$. 
The structure equation at a point on 
$\left\{p_{11}^{6}=p_{12}^{6}=0\right\}$ is described by 
\begin{align*}
d\varpi_{i}&\equiv 0 \hspace{0.5cm}(i=0,1,2)\hspace{0.5cm} 
\mod\ \varpi_{0},\ \varpi_{1},\ \varpi_{2},\ \varpi_{\omega_{1}},\ \varpi_{\omega _{2}},\\
d\varpi_{\omega_{1}}&\equiv \pi_{12}\wedge ({dp_{11}^{6}}+f\pi_{22})+\pi_{22}\wedge 
          ({dp_{12}^{6}}+g\pi_{22}) \quad  
            \mod\ \varpi_{0},\ \varpi_{1},\ \varpi_{2},\ \varpi_{\omega_{1}},\ \varpi_{\omega_{2}},\\
d\varpi_{\omega_{2}}&\equiv \pi_{12}\wedge ({dp_{12}^{6}}+g\pi_{22})\quad 
\mod\ \varpi_{0},\ \varpi_{1},\ \varpi_{2},\ \varpi_{\omega_{1}},\ \varpi_{\omega_{2}}.
\end{align*}
where $f$ and $g$ are appropriate functions. Hence we have 
$\partial \hat D=\left\{\varpi_{0}=\varpi_{1}
=\varpi_{2}=0\right\}={p_{*}^{-1}}(D).$ 
The structure equation of $\partial \hat D$ is given by  
\begin{align*}
d\varpi_{0}&\equiv 0 \hspace{4.8cm} \quad \mod\ \varpi_{0},\ \varpi_{1},\ \varpi_{2},\\
d\varpi_{1}&\equiv \varpi_{\omega_{2}}\wedge {\pi_{12}} \hspace{3.7cm}  
\mod\ \varpi_{0},\ \varpi_{1},\ \varpi_{2}, \varpi_{\omega_{1}}\wedge \varpi_{\omega_{2}},\\
d\varpi_{2}&\equiv \varpi_{\omega_{1}}\wedge {\pi_{12}}+\varpi_{\omega_{2}}\wedge {\pi_{22}} \hspace{1.4cm}  
\mod\ \varpi_{0},\ \varpi_{1},\ \varpi_{2}, \varpi_{\omega_{1}}\wedge \varpi_{\omega_{2}}. 
\end{align*}
Therefore, we get 
$\partial^{2}\hat D=\partial ^{(2)}\hat D=\left\{\varpi_{0}=0\right\}.$  
The structure equation of $\partial^{2} \hat D$ is expressed as 
\begin{align*}
d\varpi_{0}\equiv 0\ 
\mod\ & \varpi_{0},\ \varpi_{1}\wedge \varpi_{2},\ 
\varpi_{1}\wedge \varpi_{\omega_{1}},\ \varpi_{1}\wedge \varpi_{\omega_{2}},\\  
 & \varpi_{2}\wedge \varpi_{\omega_{1}},\ \varpi_{2}\wedge \varpi_{\omega_{2}},\ 
\varpi_{\omega_{1}}\wedge \varpi_{\omega_{2}}.
\end{align*}
Hence we have $\partial ^{(3)}\hat D=\partial ^{(2)}\hat D.$  
\end{proof} 
From the above proposition, $(\Sigma(R), \hat D)$ is locally 
weakly regular around $w \in \Sigma_{0}\cup \Sigma_{1}.$ So 
we define the symbol algebra at $w$. On the other hand, for a point 
$w$ on $\Sigma_{2}$, $(\Sigma(R), \hat D)$ is not weakly regular around $w$. 
However, by taking the filtration on $\Sigma(R)$ which is same to the 
hyperbolic case, we can define the symbol algebra at $w$. 
Each structure of symbol algebras is given in the following.  
\begin{proposition}\label{par-symbol}
For $w\in \Sigma_{0}$, the symbol algebra $\mathfrak m_{0}(w)$ 
is isomorphic to $\mathfrak m_{0}$, where  
$\mathfrak m_{0}=\mathfrak g_{-4}\oplus\mathfrak g_{-3}\oplus\mathfrak g_{-2}\oplus\mathfrak g_{-1},$ 
whose bracket relations are given by 
$$[X_{p_{12}^{1}},\ X_{\omega_{2}}]=X_{\pi_{12}},\quad
[X_{p_{12}^{1}},\ X_{\omega_{1}}]=[X_{p_{22}^{1}},\ X_{\omega_{2}}]=X_{\pi_{22}},\quad 
[X_{\pi_{12}},\ X_{\omega_{2}}]=X_{1},$$  
$$[X_{\pi_{12}},\ X_{\omega_{1}}]=[X_{\pi_{22}},\ X_{\omega_{2}}]=X_{2},\quad 
[X_{1},\ X_{\omega_{1}}]=[X_{2},\ X_{\omega_{2}}]=X_{0},$$
and the other brackets are trivial.\\ Here $\left\{X_{0},\ X_{1},\ X_{2},\ X_{p_{12}^{1}},\ 
X_{p_{22}^{1}},\ X_{\omega_{1}},\ X_{\omega_{2}},\ 
X_{\pi_{12}},\ X_{\pi_{22}}\right\}$ is a basis of $\mathfrak m_{0}$ and 
\begin{align*}
\mathfrak g_{-1}&=\left\{X_{\omega_{1}},\ X_{\omega_{2}},\ X_{p_{12}^{1}},\ X_{p_{22}^{1}}\right\},\ 
\mathfrak g_{-2}=\left\{X_{\pi_{12}},\ X_{\pi_{22}}\right\},\ 
\mathfrak g_{-3}=\left\{X_{1},\ X_{2}\right\},\ 
\mathfrak g_{-4}=\left\{X_{0}\right\}.
\end{align*} 
For $w\in \Sigma_{1}$, the symbol algebra $\mathfrak m_{1}(w)$ 
is isomorphic to $\mathfrak m_{1}$, where  
$\mathfrak m_{1}=\mathfrak g_{-4}\oplus\mathfrak g_{-3}\oplus\mathfrak g_{-2}\oplus\mathfrak g_{-1},$ 
whose bracket relations are given by
$$[X_{p_{11}^{6}},\ X_{\pi_{12}}]=[X_{p_{12}^{6}},\ X_{\pi_{22}}]=X_{\omega_{1}},\quad
[X_{p_{12}^{6}},\ X_{\pi_{12}}]=X_{\omega_{2}},$$ 
$$[X_{\pi_{12}},\ X_{\omega_{2}}]=X_{1},\quad   
[X_{\pi_{12}},\ X_{\omega_{1}}]=[X_{\pi_{22}},\ X_{\omega_{2}}]=X_{2},\  
[X_{1}, X_{\pi_{12}}]=X_{0},$$
and the other brackets are trivial.\\ Here  
$\left\{X_{0},\ X_{1},\ X_{2},\ X_{p_{11}^{6}},\ 
X_{p_{12}^{6}},\ X_{\omega_{1}},\ X_{\omega_{2}},\ 
X_{\pi_{12}},\ X_{\pi_{22}}\right\}$ is a basis of $\mathfrak m_{1}$ and 
\begin{align*}
\mathfrak g_{-1}=\left\{X_{\pi_{12}},\ X_{\pi_{22}},\ X_{p_{11}^{6}},\ X_{p_{12}^{6}}\right\},\ 
\mathfrak g_{-2}=\left\{X_{\omega_{1}},\ X_{\omega_{2}}\right\},\ 
\mathfrak g_{-3}=\left\{X_{1},\ X_{2}\right\},\ 
\mathfrak g_{-4}=\left\{X_{0}\right\}.
\end{align*}
For $w\in \Sigma_{2}$, the symbol algebra $\mathfrak m_{2}(w)$ 
is isomorphic to $\mathfrak m_{2}$, where  
$\mathfrak m_{2}=\mathfrak g_{-4}\oplus\mathfrak g_{-3}\oplus\mathfrak g_{-2}\oplus\mathfrak g_{-1},$ 
whose bracket relations are given by
$$[X_{p_{11}^{6}},\ X_{\pi_{12}}]=[X_{p_{12}^{6}},\ X_{\pi_{22}}]=X_{\omega_{1}},\quad
[X_{p_{12}^{6}},\ X_{\pi_{12}}]=X_{\omega_{2}},$$ 
$$[X_{\pi_{12}},\ X_{\omega_{2}}]=X_{1},\quad   
[X_{\pi_{12}},\ X_{\omega_{1}}]=[X_{\pi_{22}},\ X_{\omega_{2}}]=X_{2},$$
and the other brackets are trivial.\\ Here  
$\left\{X_{0},\ X_{1},\ X_{2},\ X_{p_{11}^{6}},\ 
X_{p_{12}^{6}},\ X_{\omega_{1}},\ X_{\omega_{2}},\ 
X_{\pi_{12}},\ X_{\pi_{22}}\right\}$ is a basis of $\mathfrak m_{2}$ and 
\begin{align*}
\mathfrak g_{-1}=\left\{X_{\pi_{12}},\ X_{\pi_{22}},\ X_{p_{11}^{6}},\ X_{p_{12}^{6}}\right\},\ 
\mathfrak g_{-2}=\left\{X_{\omega_{1}},\ X_{\omega_{2}}\right\},\ 
\mathfrak g_{-3}=\left\{X_{1},\ X_{2}\right\},\ 
\mathfrak g_{-4}=\left\{X_{0}\right\}. 
\end{align*}
\end{proposition}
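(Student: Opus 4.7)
The plan is to parallel the proofs of Propositions~\ref{hyp-symbol1} and~\ref{hyp-symbol2}: on each stratum $\Sigma_i$ I extract the symbol algebra directly from the structure equations already established in the proof of Proposition~\ref{par-derived}, by completing the $\hat D$-defining forms to a coframe in which those structure equations take a clean normal form, passing to the dual frame, and then reading Lie bracket coefficients off the identity $d\varpi(X,Y)=-\varpi([X,Y])$, which holds whenever $\varpi(X)=\varpi(Y)=0$. The degree assignments are forced by the (weak) derived flag, so once the brackets are computed it is immediate that they match the abstract $\mathfrak m_i$ claimed in the statement.

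For $w\in\Sigma_0$ I would work on $U_{\omega_1\omega_2}$, adjoin the 1-forms $\varpi_{p_{12}^1}:=dp_{12}^1+f\omega_1$ and $\varpi_{p_{22}^1}:=dp_{22}^1-g\omega_1$ to $\{\varpi_0,\varpi_1,\varpi_2,\varpi_{\pi_{12}},\varpi_{\pi_{22}},\omega_1,\omega_2\}$ so that the five structure equations for $d\varpi_{\pi_{12}}$, $d\varpi_{\pi_{22}}$, $d\varpi_1$, $d\varpi_2$, $d\varpi_0$ take the same clean wedge form already exhibited in Proposition~\ref{par-derived}; the generators of $\mathfrak g_{-2}$, $\mathfrak g_{-3}$, $\mathfrak g_{-4}$ are then produced in turn with the exact bracket coefficients claimed for $\mathfrak m_0$. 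For $w\in\Sigma_1$ the chart to use is $U_{\pi_{12}\pi_{22}}$ restricted to $\{p_{11}^6\ne 0,\ p_{12}^6=0\}$; the key tactical move here is the rescaling $\varpi_0':=\varpi_0/p_{11}^6$ already performed in the proof of Proposition~\ref{par-derived}, which turns the top-degree structure equation into $d\varpi_0'\equiv \pi_{12}\wedge\varpi_1$ modulo the appropriate ideal and produces the single degree $-4$ bracket $[X_1,X_{\pi_{12}}]=X_0$ claimed for $\mathfrak m_1$, the remaining brackets being read off exactly as in the $\Sigma_0$ case.

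The case $w\in\Sigma_2$ is where the only real subtlety lies, and it mirrors the analogous $\Sigma_2$ case of the hyperbolic analysis: on $\{p_{11}^6=p_{12}^6=0\}$ the system $\hat D$ fails to be weakly regular in Tanaka's sense, so I would invoke Morimoto's filtered-manifold framework, setting $F^{-k}:=\partial^{(k-1)}\hat D$ for $k=1,2,3$ and $F^{-4}:=T\Sigma(R)$; the jump from $\partial^{(2)}\hat D$ to $T\Sigma(R)$ happens transversally to $\Sigma_2$, so the filtration is well-defined even though the weak derived flag stabilizes, and the associated graded is computed by the same dual-frame procedure. The structure equation $d\varpi_0\equiv 0$ modulo the relevant ideal on $\{p_{11}^6=p_{12}^6=0\}$ then encodes exactly the vanishing of $[\mathfrak g_{-1},\mathfrak g_{-3}]$ that distinguishes $\mathfrak m_2$ from $\mathfrak m_1$. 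The verification that the abstract bracket tables in the statement actually define graded nilpotent Lie algebras (Jacobi, compatibility with the grading) is a routine check independent of the geometry, so the main obstacle is purely the Morimoto bookkeeping at $\Sigma_2$; everything else is parallel to the hyperbolic computation already in hand.
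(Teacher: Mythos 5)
Your proposal is correct and takes essentially the same route as the paper's proof: on each stratum you use the same charts and normalizations ($U_{\omega_{1}\omega_{2}}$ for $\Sigma_{0}$; $U_{\pi_{12}\pi_{22}}$ with the rescaling $\varpi_{0}^{\prime}=\varpi_{0}/p_{11}^{6}$ on $\{p_{11}^{6}\neq 0,\ p_{12}^{6}=0\}$ for $\Sigma_{1}$; the Morimoto filtration $F^{-1}=\hat D\subset F^{-2}=\partial\hat D\subset F^{-3}=\partial^{(2)}\hat D\subset F^{-4}=T\Sigma(R)$ for $\Sigma_{2}$), complete the defining forms of $\hat D$ to a coframe by the corrected forms $\varpi_{p_{12}^{1}},\varpi_{p_{22}^{1}}$ (resp. $\varpi_{p_{11}^{6}},\varpi_{p_{12}^{6}}$), and read the brackets off the dual frame via $d\varpi(X,Y)=-\varpi([X,Y])$, exactly as in Propositions \ref{hyp-symbol1} and \ref{hyp-symbol2} and the structure equations of Proposition \ref{par-derived}. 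This matches the paper's argument (your choice $\varpi_{p_{22}^{1}}=dp_{22}^{1}-g\omega_{1}$ is the consistent one and the discrepancy with the paper's $-g\omega_{2}$ is immaterial for the symbol computation), so nothing further is needed.
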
 
\begin{proof}
We first show that $\mathfrak m_{0}(w)\cong \mathfrak m_{0}$ for $w\in \Sigma_{0}$. 
On $U_{\omega_{1}\omega_{2}}$ in the proof of Proposition \ref{par-derived},  
we set 
$\varpi_{p_{12}^{1}}:=d{p_{12}^{1}}+f\omega_{1}
,\ \varpi_{p_{22}^{1}}:=d{p_{22}^{1}}-g\omega_{2},$
and take a coframe:\\
$\left\{\varpi_{0}, \varpi_{1}, \varpi_{2}, \varpi_{\pi_{12}}, \varpi_{\pi_{22}},  
\omega_{1}, \omega_{2}, \varpi_{p_{12}^{1}}, \varpi_{p_{22}^{1}} \right\}$, 
then the structure equations are given by 
\begin{align*}
d\varpi_{i}&\equiv 0 \hspace{0.5cm}(i=0,1,2)\hspace{2.5cm} \quad
\mod\ \varpi_{0},\ \varpi_{1},\ \varpi_{2},\ \varpi_{\pi_{12}},\ \varpi_{\pi_{22}},\\
d\varpi_{\pi_{12}}&\equiv \omega_{2}\wedge \varpi_{p_{12}^{1}} \hspace{3.9cm}  
\mod\ \varpi_{0},\ \varpi_{1},\ \varpi_{2},\ \varpi_{\pi_{12}},\ \varpi_{\pi_{22}},\\
d\varpi_{\pi_{22}}&\equiv - \varpi_{p_{12}^{1}} \wedge \omega_{1}
 -\varpi_{p_{22}^{1}} \wedge \omega_{2} \quad \quad \quad \  
\mod\ \varpi_{0},\ \varpi_{1},\ \varpi_{2},\ \varpi_{\pi_{12}},\ \varpi_{\pi_{22}}. 
\end{align*}
\begin{align*}
d\varpi_{0}&\equiv 0 \hspace{5cm} \quad \mod\ \varpi_{0},\ \varpi_{1},\ \varpi_{2},\\
d\varpi_{1}&\equiv \omega_{2}\wedge \varpi_{\pi_{12}} \hspace{4cm}  
\mod\ \varpi_{0},\ \varpi_{1},\ \varpi_{2}, \varpi_{\pi_{12}}\wedge \varpi_{\pi_{22}},\\
d\varpi_{2}&\equiv \omega_{1}\wedge \varpi_{\pi_{12}}+\omega_{2}\wedge \varpi_{\pi_{22}} \hspace{1.8cm}  
\mod\ \varpi_{0},\ \varpi_{1},\ \varpi_{2}, \varpi_{\pi_{12}}\wedge \varpi_{\pi_{22}},
\end{align*}
\begin{align*}
d\varpi_{0}\equiv \omega_{1}\wedge \varpi_{1}+\omega_{2}\wedge \varpi_{2},\ 
 \mod\ & \varpi_{0},\ \varpi_{1}\wedge \varpi_{2},\ 
\varpi_{1}\wedge \varpi_{\pi_{12}},\ \varpi_{1}\wedge \varpi_{\pi_{22}},\\
& \varpi_{2}\wedge \varpi_{\pi_{12}},\ \varpi_{2}\wedge \varpi_{\pi_{22}}
,\ \varpi_{\pi_{12}}\wedge \varpi_{\pi_{22}}.
\end{align*}
We take the dual frame:
$\left\{X_{0},\ X_{1},\ X_{2},\ X_{\pi_{12}},\ X_{\pi_{22}},\ 
X_{\omega_{1}},\ X_{\omega_{2}},\ X_{p_{12}^{1}},\ X_{p_{22}^{1}} \right\}$.  
Then, by using the same argument to the hyperbolic case, we have the 
bracket relations of $\mathfrak m_{0}$.\par
Next, we show that the isomorphism $\mathfrak m_{1}(w)\cong \mathfrak m_{1}$ for 
a point on $\Sigma_{1}$. 
On $U_{\pi_{12}\pi_{22}}$ in the proof of Proposition \ref{par-derived}, 
we set 
$\varpi_{p_{11}^{6}}:=d{p_{11}^{6}}+f\pi_{22}
,\ \varpi_{p_{12}^{6}}:=d{p_{12}^{6}}+g\pi_{22},$  
and take a coframe 
$\left\{\varpi_{0}^{\prime},\ \varpi_{1},\ \varpi_{2},\ \varpi_{\omega_{1}},\ \varpi_{\omega_{2}},\ 
\pi_{12},\ \pi_{22},\ \varpi_{p_{11}^{6}},\ \varpi_{ p_{12}^{6}} \right\}$. 
and its dual frame 
$\left\{X_{0},\ X_{1},\ X_{2},\ X_{\omega_{1}},\ X_{\omega_{2}},\ 
X_{\pi_{12}},\ X_{\pi_{22}},\ X_{p_{11}^{6}},\ X_{p_{12}^{6}} \right\}.$ 
From the proof of Proposition \ref{par-derived}, the structure equations 
at a point on $\left\{p^{6}_{11}\not=0,\ p^{6}_{12}=0\right\}$ are described by 
\begin{align*}
d\varpi_{0}^{\prime}&\equiv 0 \hspace{3.9cm} \quad
\mod\ \varpi_{0}^{\prime},\ \varpi_{1},\ \varpi_{2},\ \varpi_{\omega_{1}},\ \varpi_{\omega _{2}},\\
d\varpi_{i}&\equiv 0 \hspace{0.5cm}(i=1,2)\hspace{1.8cm} \quad
\mod\ \varpi_{0}^{\prime},\ \varpi_{1},\ \varpi_{2},\ \varpi_{\omega_{1}},\ \varpi_{\omega _{2}},\\
d\varpi_{\omega_{1}}&\equiv \pi_{12}\wedge \varpi_{p_{11}^{6}}
       +\pi_{22}\wedge \varpi_{p_{12}^{6}} \quad  
            \mod\ \varpi_{0},\ \varpi_{1},\ \varpi_{2},\ \varpi_{\omega_{1}},\ \varpi_{\omega_{2}},\\
d\varpi_{\omega_{2}}&\equiv \pi_{12}\wedge \varpi_{p_{12}^{6}} \hspace{2.7cm}
\mod\ \varpi_{0}^{\prime},\ \varpi_{1},\ \varpi_{2},\ \varpi_{\omega_{1}},\ \varpi_{\omega_{2}}.
\end{align*}
\begin{align*}
d\varpi_{0}^{\prime}&\equiv 0 \hspace{4.3cm} \quad \mod\ \varpi_{0}^{\prime},\ \varpi_{1},\ \varpi_{2},\\
d\varpi_{1}&\equiv \varpi_{\omega_{2}}\wedge \pi_{12} \hspace{3.3cm}  
\mod\ \varpi_{0},\ \varpi_{1},\ \varpi_{2}, \varpi_{\omega_{1}}\wedge \varpi_{\omega_{2}},\\
d\varpi_{2}&\equiv \varpi_{\omega_{1}}\wedge \pi_{12}+\varpi_{\omega_{2}}\wedge \pi_{22} \hspace{1.1cm}  
\mod\ \varpi_{0},\ \varpi_{1},\ \varpi_{2}, \varpi_{\omega_{1}}\wedge \varpi_{\omega_{2}},
\end{align*}
$d\varpi_{0}^{\prime}\equiv \pi_{12}\wedge \varpi_{1}\ 
\mod\ \varpi_{0}^{\prime},\ \varpi_{1}\wedge \varpi_{2},\ 
\varpi_{1}\wedge \varpi_{\omega_{1}},\ \varpi_{1}\wedge \varpi_{\omega_{2}},\ 
\varpi_{2}\wedge \varpi_{\omega_{1}},\ \varpi_{2}\wedge \varpi_{\omega_{2}},\ 
\varpi_{\omega_{1}}\wedge \varpi_{\omega_{2}}.$\\ 
Then, by using the same argument to the hyperbolic case, we have the 
bracket relations of $\mathfrak m_{1}$.\par
Finally, we prove the statement for $\mathfrak m_{2}$. 
We use the coordinate on $U_{\pi_{12}\pi_{22}}$ which is same to the case of  
$\Sigma_{1}$. 
From the proof of Proposition \ref{par-derived}, the structure equations 
at a point on $\left\{p^{6}_{11}=p^{6}_{12}=0\right\}$ are expressed as  
\begin{align*}
d\varpi_{i}&\equiv 0 \hspace{0.5cm}(i=0,1,2)\hspace{1.4cm} \quad
\mod\ \varpi_{0},\ \varpi_{1},\ \varpi_{2},\ \varpi_{\omega_{1}},\ \varpi_{\omega _{2}},\\
d\varpi_{\omega_{1}}&\equiv \pi_{12}\wedge \varpi_{p_{11}^{6}}
          +\pi_{22}\wedge \varpi_{p_{12}^{6}} \quad  
            \mod\ \varpi_{0},\ \varpi_{1},\ \varpi_{2},\ \varpi_{\omega_{1}},\ \varpi_{\omega_{2}},\\
d\varpi_{\omega_{2}}&\equiv \pi_{12}\wedge \varpi_{p_{12}^{6}} \hspace{2.7cm} 
\mod\ \varpi_{0},\ \varpi_{1},\ \varpi_{2},\ \varpi_{\omega_{1}},\ \varpi_{\omega_{2}}.
\end{align*} 
\begin{align*}
d\varpi_{0}&\equiv 0 \hspace{4.3cm} \quad \mod\ \varpi_{0},\ \varpi_{1},\ \varpi_{2},\\
d\varpi_{1}&\equiv \varpi_{\omega_{2}}\wedge {\pi_{12}} \hspace{3.2cm}  
\mod\ \varpi_{0},\ \varpi_{1},\ \varpi_{2}, \varpi_{\omega_{1}}\wedge \varpi_{\omega_{2}},\\
d\varpi_{2}&\equiv \varpi_{\omega_{1}}\wedge {\pi_{12}}+\varpi_{\omega_{2}}\wedge {\pi_{22}} \hspace{1.1cm}  
\mod\ \varpi_{0},\ \varpi_{1},\ \varpi_{2}, \varpi_{\omega_{1}}\wedge \varpi_{\omega_{2}},
\end{align*}
\begin{align*}
d\varpi_{0}\equiv 0\ 
\mod\ & \varpi_{0},\ \varpi_{1}\wedge \varpi_{2},\ 
\varpi_{1}\wedge \varpi_{\omega_{1}},\ \varpi_{1}\wedge \varpi_{\omega_{2}},\\ 
& \varpi_{2}\wedge \varpi_{\omega_{1}},\ \varpi_{2}\wedge \varpi_{\omega_{2}},\ 
\varpi_{\omega_{1}}\wedge \varpi_{\omega_{2}}.
\end{align*}
Thus we have the statement for $\mathfrak m_{2}(w)$ from the same argument. 
\end{proof}
\noindent{\bf Structures of rank 2 prolongations for elliptic equations.} 
Let $(R,D)$ be a locally elliptic equation and 
$(\Sigma(R), \hat D)$ the rank 2 prolongation.  
We use the geometric decomposition (\ref{decomposition1}) of $\Sigma(R)$ 
which is similar to the hyperbolic case. 
From Lemma \ref{ell-lem}, locally, we have 
$\Sigma_{0}|_{p^{-1}(U)}=P_{\omega_{1}\omega_{2}},\quad 
\Sigma_{2}|_{p^{-1}(U)}=P_{\pi_{11}\pi_{12}}\backslash P_{\omega_{1}\omega_{2}}$,  
where $p$ is the projection of the fibration $\Sigma(R) \to R$.
The set $\Sigma_0$ is an open set in $\Sigma(R)$, and is a $\mathbb{R}^2$--bundle over $R$.  
The set $\Sigma_2$ is a codimension 2 submanifold of $\Sigma(R)$ and is a section of $\Sigma(R) \to R$. 
\begin{proposition}\label{ell-derived}
The differential system $\hat D$ on $\Sigma(R)$ is regular, but 
is not weakly regular. More precisely, we obtain that 
$\hat D\subset \partial \hat D\subset \partial^{2}\hat D \subset 
\partial^{3}\hat D=T\Sigma(R).$ 
Moreover, we have $\partial^{2}\hat D=\partial^{(2)}\hat D$,   
$\partial^{(3)}\hat D=T\Sigma(R)\ \ {\rm on}\ \Sigma_{0}$, and 
$\partial^{(3)}\hat D=\partial^{(2)}\hat D\ \ {\rm on}\ \Sigma_{2}$. 
\end{proposition}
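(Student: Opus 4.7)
The strategy is to mirror the proofs of Propositions \ref{hyp-derived} and \ref{par-derived}: work separately on the two strata in the decomposition $\Sigma(R)=\Sigma_{0}\cup\Sigma_{2}$ coming from Lemma \ref{ell-lem} (there is no $\Sigma_{1}$ piece in the elliptic case), and in each chart compute the successive derived systems by exhibiting an explicit coframe adapted to $\hat D$, compute $d\varpi_{i}$ modulo increasing ideals, and identify each derived system by the standard annihilator formulas recalled at the beginning of Section 3.

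First, on $\Sigma_{0}$ I work in the chart $U_{\omega_{1}\omega_{2}}$. There, using the coordinates $p^{1}_{ij}$ from part (I) of the elliptic analysis, the defining equations of $\Sigma(R)$ are $p^{1}_{12}=p^{1}_{21}$ and $p^{1}_{11}=-p^{1}_{22}$. Setting $\varpi_{\pi_{11}}:=\pi_{11}-p^{1}_{11}\omega_{1}-p^{1}_{21}\omega_{2}$ and $\varpi_{\pi_{12}}:=\pi_{12}-p^{1}_{21}\omega_{1}+p^{1}_{11}\omega_{2}$ gives a coframe in which $\hat D=\{\varpi_{0}=\varpi_{1}=\varpi_{2}=\varpi_{\pi_{11}}=\varpi_{\pi_{12}}=0\}$. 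A direct computation of $d\varpi_{\pi_{11}}$ and $d\varpi_{\pi_{12}}$ modulo $\hat D^{\perp}$ will show these differentials are $\not\equiv 0$, hence $\partial\hat D=\{\varpi_{0}=\varpi_{1}=\varpi_{2}=0\}=p_{*}^{-1}(D)$. Using the pulled-back elliptic structure equation \eqref{ellequ}, $d\varpi_{1}$ and $d\varpi_{2}$ then generate $\partial^{2}\hat D=\partial^{(2)}\hat D=\{\varpi_{0}=0\}$, and finally $d\varpi_{0}\equiv \omega_{1}\wedge\varpi_{1}+\omega_{2}\wedge\varpi_{2}$ is nonzero modulo the products appearing in the weak-derived annihilator formula, yielding $\partial^{(3)}\hat D=T\Sigma(R)$ on $\Sigma_{0}$.

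Next, on $\Sigma_{2}$ I work in the chart $U_{\pi_{11}\pi_{12}}$ using the coordinates $p^{6}_{ij}$ from part (VI). The stratum $\Sigma_{2}$ corresponds, by its geometric characterization as the section where $w\cap\text{fiber}$ is $2$-dimensional, to the codimension-$2$ locus $\{p^{6}_{11}=p^{6}_{22}=0,\ p^{6}_{12}=-p^{6}_{21}\}$ (so that $\omega_{1}|_{w}=\omega_{2}|_{w}=0$). After introducing auxiliary 1-forms $\varpi_{\omega_{1}},\varpi_{\omega_{2}}$ of the form $\omega_{i}-(\text{linear in }\pi_{11},\pi_{12})$ that vanish on $\hat D$, the same sort of computation gives $\partial\hat D=p_{*}^{-1}(D)$ and $\partial^{2}\hat D=\partial^{(2)}\hat D=\{\varpi_{0}=0\}$ exactly as before. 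The crucial and most delicate step is the computation at $\Sigma_{2}$: restricted to this stratum, $\omega_{1}$ and $\omega_{2}$ lie in the span of $\varpi_{\omega_{1}},\varpi_{\omega_{2}}$, so that
\[
d\varpi_{0}\equiv \omega_{1}\wedge\varpi_{1}+\omega_{2}\wedge\varpi_{2}\equiv 0
\]
modulo the full list of wedge products $\varpi_{0},\ \varpi_{1}\wedge\varpi_{2},\ \varpi_{i}\wedge\varpi_{\omega_{j}},\ \varpi_{\omega_{1}}\wedge\varpi_{\omega_{2}}$ that defines $(\partial^{(3)}\hat D)^{\perp}$. Hence on $\Sigma_{2}$ no new direction is added and $\partial^{(3)}\hat D=\partial^{(2)}\hat D$, while of course the ordinary derived system still grows to $\partial^{3}\hat D=T\Sigma(R)$ because the usual bracket of $\mathcal D$ with $\partial^{2}\mathcal D$, rather than only with $\mathcal D$ itself, is allowed.

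The step I expect to be the main obstacle is the $\Sigma_{2}$ analysis, specifically keeping careful track of which products of the auxiliary 1-forms are allowed in the weak-derived annihilator filtration and verifying that the only terms in $d\varpi_{0}$ that could survive are proportional to $\omega_{1},\omega_{2}$, which vanish as sections of the bundle modulo $\varpi_{\omega_{1}},\varpi_{\omega_{2}}$ along $\Sigma_{2}$. Once that vanishing is established, the failure of weak regularity at points of $\Sigma_{2}$ (and its absence on $\Sigma_{0}$) follows immediately, giving the stated dichotomy.
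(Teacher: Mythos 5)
Your proposal is correct and follows essentially the same route as the paper's own proof: the same decomposition $\Sigma(R)=\Sigma_{0}\cup\Sigma_{2}$ from Lemma \ref{ell-lem}, the same charts $U_{\omega_{1}\omega_{2}}$ and $U_{\pi_{11}\pi_{12}}$ with the same adapted coframes $\varpi_{\pi_{11}},\varpi_{\pi_{12}}$ and $\varpi_{\omega_{1}},\varpi_{\omega_{2}}$, and the same key observation that along $\{p^{6}_{11}=p^{6}_{12}=0\}$ one has $\omega_{i}=\varpi_{\omega_{i}}$, so $d\varpi_{0}\equiv 0$ modulo the weak-derived annihilator products and hence $\partial^{(3)}\hat D=\partial^{(2)}\hat D$ on $\Sigma_{2}$. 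One small imprecision in your closing remark: on $\Sigma_{2}$ the ordinary flag still reaches $T\Sigma(R)$ because $\partial^{3}\hat D=\partial^{2}\hat D+[\partial^{2}\mathcal D,\partial^{2}\mathcal D]$ permits brackets of two sections of $\partial^{2}\hat D$ (e.g.\ the frame vectors dual to $\varpi_{\omega_{1}}$ and $\varpi_{1}$), not because brackets of $\mathcal D$ with $\partial^{2}\mathcal D$ are allowed --- the latter is precisely what the weak derived system uses and is what fails to produce new directions there.
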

\begin{proof}
On each component $\Sigma_{i}$ in the decomposition,  
we calculate the structure equation of $\hat D$. 
First, we consider it on $\Sigma_{0}$.  
The canonical system $\hat D$ on $U_{\omega_{1}\omega_{2}}$ is given by 
$\hat D=\left\{\varpi_{0}=\varpi_{1}=\varpi_{2}
=\varpi_{\pi_{11}}=\varpi_{\pi_{12}}=0\right\},$  
where 
$\varpi_{\pi_{11}}:=\pi_{11}-{p_{11}^{1}}\omega_{1}-{p_{12}^{1}}\omega_{2},\quad  
\varpi_{\pi_{12}}:=\pi_{12}-{p_{12}^{1}}\omega_{1}+{p_{11}^{1}}\omega_{2}.$ 
The structure equation of $\hat D$ on $\Sigma_0$ is given by 
\begin{align*}
d\varpi_{i}&\equiv 0 \hspace{0.5cm}(i=0,1,2)\hspace{0.5cm} \quad
\bmod\ \varpi_{0},\ \varpi_{1},\ \varpi_{2},\ \varpi_{\pi_{11}},\ \varpi_{\pi_{12}},\\
d\varpi_{\pi_{11}}&\equiv \omega_{1}\wedge ({dp_{11}^{1}}+f\omega_{2})+
\omega_{2}\wedge ({dp_{12}^{1}}+g\omega_{2})\quad    
\bmod\ \varpi_{0},\ \varpi_{1},\ \varpi_{2},\ \varpi_{\pi_{11}},\ \varpi_{\pi_{12}},\\
d\varpi_{\pi_{12}}&\equiv \omega_{1}\wedge ({dp_{12}^{1}}+g\omega_{2})  
                    -\omega_{2}\wedge ({dp_{11}^{1}}+f\omega_{2}) \quad 
\bmod\ \varpi_{0},\ \varpi_{1},\ \varpi_{2},\ \varpi_{\pi_{11}},\ \varpi_{\pi_{12}}.
\end{align*}
where $f$ and $g$ are appropriate functions. Hence we have 
$\partial \hat D=\left\{\varpi_{0}=\varpi_{1}
=\varpi_{2}=0\right\}={p_{*}^{-1}}(D).$ 
The structure equation of $\partial \hat D$ is written as 
\begin{align*}
d\varpi_{0}&\equiv 0 \hspace{5.4cm} \quad \bmod\ \varpi_{0},\ \varpi_{1},\ \varpi_{2},\\
d\varpi_{1}&\equiv \omega_{1}\wedge \varpi_{\pi_{11}}+\omega_{2}\wedge \varpi_{\pi_{12}} \hspace{2cm}  
\bmod\ \varpi_{0},\ \varpi_{1},\ \varpi_{2}, \varpi_{\pi_{11}}\wedge \varpi_{\pi_{12}},\\
d\varpi_{2}&\equiv \omega_{1}\wedge \varpi_{\pi_{12}}-\omega_{2}\wedge \varpi_{\pi_{11}} \hspace{2cm}  
\bmod\ \varpi_{0},\ \varpi_{1},\ \varpi_{2}, \varpi_{\pi_{11}}\wedge \varpi_{\pi_{12}},
\end{align*}
Hence we have $\partial ^{2}\hat D=\partial ^{(2)}\hat D=\left\{\varpi_{0}=0\right\}.$  
The structure equation of $\partial^{2} \hat D$ is expressed as 
\begin{align*}
d\varpi_{0}\equiv \omega_{1}\wedge \varpi_{1}+\omega_{2}\wedge \varpi_{2},\ 
 \bmod\ & \varpi_{0},\ \varpi_{1}\wedge \varpi_{2},\  
\varpi_{1}\wedge \varpi_{\pi_{11}},\ \varpi_{1}\wedge \varpi_{\pi_{12}},\\  
& \varpi_{2}\wedge \varpi_{\pi_{11}},\ \varpi_{2}\wedge \varpi_{\pi_{12}},\ 
\varpi_{\pi_{11}}\wedge \varpi_{\pi_{12}}. 
\end{align*}
Hence, we have $\partial ^{(3)}\hat D=T\Sigma(R).$ 
Next we consider on $\Sigma_2$. 
The canonical system $\hat D$ on $U_{\pi_{11}\pi_{12}}$ is given by 
$\hat D=\left\{\varpi_{0}=\varpi_{1}=\varpi_{2}=
\varpi_{\omega_{1}}=\varpi_{\omega_{2}}=0\right\},$ 
where 
$\varpi_{\omega_{1}}:=\omega_{1}-{p_{11}^{6}}\pi_{11}-{p_{12}^{6}}\pi_{12},\quad  
\varpi_{\omega_{2}}:=\omega_{2}-{p_{12}^{6}}\pi_{11}+{p_{11}^{6}}\pi_{12}.$ 
For $w\in U_{\pi_{11}\pi_{12}}$, 
$w\in \Sigma_{2}$ if and only if $p^{6}_{11}(w)=p^{6}_{12}(w)=0$. 
Therefore, we calculate the structure equation at $w$ in the  codimension 2 
submanifold $\left\{p^{6}_{11}=p^{6}_{12}=0\right\}\subset \Sigma(R)$. 
The structure equation at a point on $\left\{p^{6}_{11}=p^{6}_{12}=0\right\}$ is 
described by 
\begin{align*}
d\varpi_{i}&\equiv 0 \hspace{0.5cm}(i=0,1,2)\hspace{0.5cm} \quad
\mod\ \varpi_{0},\ \varpi_{1},\ \varpi_{2},\ \varpi_{\omega_{1}},\ \varpi_{\omega _{2}},\\
d\varpi_{\omega_{1}}&\equiv \pi_{11}\wedge ({dp_{11}^{6}}+f\pi_{12})+\pi_{12}\wedge 
          ({dp_{12}^{6}}+g\pi_{12}) \quad  
            \mod\ \varpi_{0},\ \varpi_{1},\ \varpi_{2},\ \varpi_{\omega_{1}},\ \varpi_{\omega_{2}},\\
d\varpi_{\omega_{2}}&\equiv  \pi_{11}\wedge ({dp_{12}^{6}}+g\pi_{12})-\pi_{12}\wedge 
          ({dp_{11}^{6}}+f\pi_{12}) \quad 
\mod\ \varpi_{0},\ \varpi_{1},\ \varpi_{2},\ \varpi_{\omega_{1}},\ \varpi_{\omega_{2}}.
\end{align*}
where $f$ and $g$ are appropriate functions. Hence we have 
$\partial \hat D=\left\{\varpi_{0}=\varpi_{1}=
\varpi_{2}=0\right\}={p_{*}^{-1}}(D).$ 
The structure equation of $\partial \hat D$ is 
\begin{align*}
d\varpi_{0}&\equiv 0 \hspace{5.9cm} \quad \mod\ \varpi_{0},\ \varpi_{1},\ \varpi_{2},\\
d\varpi_{1}&\equiv \varpi_{\omega_{1}}\wedge \pi_{11}+\varpi_{\omega_{2}}\wedge \pi_{12} \hspace{2.6cm}  
\mod\ \varpi_{0},\ \varpi_{1},\ \varpi_{2}, \varpi_{\omega_{1}}\wedge \varpi_{\omega_{2}},\\
d\varpi_{2}&\equiv \varpi_{\omega_{1}}\wedge \pi_{12}-\varpi_{\omega_{2}}\wedge \pi_{11} \hspace{2.6cm}  
\mod\ \varpi_{0},\ \varpi_{1},\ \varpi_{2}, \varpi_{\omega_{1}}\wedge \varpi_{\omega_{2}},
\end{align*}
Hence we have $\partial ^{2}\hat D=\partial ^{(2)}\hat D=\left\{\varpi_{0}=0\right\}.$  
The structure equation of $\partial^{2} \hat D$ is given by 
\begin{align*}
d\varpi_{0}\equiv 0\ 
\mod\ & \varpi_{0},\ \varpi_{1}\wedge \varpi_{2},\ 
\varpi_{1}\wedge \varpi_{\omega_{1}},\ \varpi_{1}\wedge \varpi_{\omega_{2}},\\ 
& \varpi_{2}\wedge \varpi_{\omega_{1}},\ \varpi_{2}\wedge \varpi_{\omega_{2}},\ 
\varpi_{\omega_{1}}\wedge \varpi_{\omega_{2}}.
\end{align*}
Thus, we have $\partial ^{(3)}\hat D=\partial ^{(2)}\hat D.$ 
\end{proof} 
From the above proposition, $(\Sigma(R), \hat D)$ is locally 
weakly regular around $w \in \Sigma_{0}.$ So 
we can define the symbol algebra at $w$ in the sense of Tanaka. On the other hand, for a point 
$w$ on $\Sigma_{2}$, $(\Sigma(R), \hat D)$ is not weakly regular around $w$. 
However, by taking the filtration on $\Sigma(R)$ which is same to the 
hyperbolic case, we can define the symbol algebra at $w$. 
Each structure of symbol algebras is given in the following.  
\begin{proposition}\label{ell-symbol}
For $w\in \Sigma_{0}$, the symbol algebra $\mathfrak m_{0}(w)$ 
is isomorphic to $\mathfrak m_{0}$, where  
$\mathfrak m_{0}=\mathfrak g_{-4}\oplus\mathfrak g_{-3}\oplus\mathfrak g_{-2}\oplus\mathfrak g_{-1},$  
whose bracket relations are given by 
$$[X_{p_{11}^{1}},\ X_{\omega_{1}}]=[X_{p_{12}^{1}},\ X_{\omega_{2}}]=X_{\pi_{11}},\quad
[X_{p_{12}^{1}},\ X_{\omega_{1}}]=[X_{\omega_{2}},\ X_{p_{11}^{1}}]=X_{\pi_{12}},$$
$$[X_{\pi_{11}},\ X_{\omega_{1}}]=[X_{\pi_{12}},\ X_{\omega_{2}}]=X_{1},\quad
[X_{\pi_{12}},\ X_{\omega_{1}}]=[X_{\omega_{2}},\ X_{\pi_{11}}]=X_{2},$$
\quad \quad \quad \ $[X_{1},\ X_{\omega_{1}}]=[X_{2},\ X_{\omega_{2}}]=X_{0}$,\  
and the other brackets are trivial.\\ 
Here $\left\{X_{0},\ X_{1},\ X_{2},\ X_{p_{11}^{1}},\ 
X_{p_{12}^{1}},\ X_{\omega_{1}},\ X_{\omega_{2}},\ 
X_{\pi_{11}},\ X_{\pi_{12}}\right\}$ is a basis of $\mathfrak m_{0}$ and 
\begin{align*}
\mathfrak g_{-1}=\left\{X_{\omega_{1}},\ X_{\omega_{2}},\ X_{p_{11}^{1}},\ X_{p_{12}^{1}}\right\},\ 
\mathfrak g_{-2}=\left\{X_{\pi_{11}},\ X_{\pi_{12}}\right\},\ 
\mathfrak g_{-3}=\left\{X_{1},\ X_{2}\right\},\ 
\mathfrak g_{-4}=\left\{X_{0}\right\}.
\end{align*} 
For $w\in \Sigma_{2}$, the symbol algebra $\mathfrak m_{2}(w)$ 
is isomorphic to $\mathfrak m_{2}$, where  
$\mathfrak m_{2}=\mathfrak g_{-4}\oplus\mathfrak g_{-3}\oplus\mathfrak g_{-2}\oplus\mathfrak g_{-1},$ 
whose bracket relations are given by
$$[X_{p_{11}^{6}},\ X_{\pi_{11}}]=[X_{p_{12}^{6}},\ X_{\pi_{12}}]=X_{\omega_{1}},\quad
[X_{p_{12}^{6}},\ X_{\pi_{11}}]=[X_{\pi_{12}},\ X_{p_{11}^{6}}]=X_{\omega_{2}},$$
$$[X_{\pi_{11}},\ X_{\omega_{1}}]=[X_{\pi_{12}},\ X_{\omega_{2}}]=X_{1},\quad
[X_{\pi_{12}},\ X_{\omega_{1}}]=[X_{\omega_{2}},\ X_{\pi_{11}}]=X_{2},$$
and the other brackets are trivial.\\ Here  
$\left\{X_{0},\ X_{1},\ X_{2},\ X_{p_{11}^{6}},\ 
X_{p_{12}^{6}},\ X_{\omega_{1}},\ X_{\omega_{2}},\ 
X_{\pi_{11}},\ X_{\pi_{12}}\right\}$ is a basis of $\mathfrak m_{2}$ and 
\begin{align*}
\mathfrak g_{-1}=\left\{X_{\pi_{11}},\ X_{\pi_{12}},\ X_{p_{11}^{6}},\ X_{p_{12}^{6}}\right\},\ 
\mathfrak g_{-2}=\left\{X_{\omega_{1}},\ X_{\omega_{2}}\right\},\ 
\mathfrak g_{-3}=\left\{X_{1},\ X_{2}\right\},\ 
\mathfrak g_{-4}=\left\{X_{0}\right\}.
\end{align*}
\end{proposition}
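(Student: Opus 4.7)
The plan is to follow exactly the same strategy that was used for the hyperbolic symbol algebras (Proposition \ref{hyp-symbol1} and \ref{hyp-symbol2}) and the parabolic case (Proposition \ref{par-symbol}). For $w\in\Sigma_0$, I work on $U_{\omega_1\omega_2}$ using the coframe adapted to the structure equations obtained in Proposition \ref{ell-derived}. Specifically, I introduce the 1-forms $\varpi_{p_{11}^1}:=dp_{11}^1+f\omega_2$ and $\varpi_{p_{12}^1}:=dp_{12}^1+g\omega_2$ with the functions $f,g$ chosen exactly as in Proposition \ref{ell-derived}, so that the coframe $\{\varpi_0,\varpi_1,\varpi_2,\varpi_{\pi_{11}},\varpi_{\pi_{12}},\omega_1,\omega_2,\varpi_{p_{11}^1},\varpi_{p_{12}^1}\}$ gives clean congruences modulo the appropriate filtration steps. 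The structure equations coming out of Proposition \ref{ell-derived} then rewrite as $d\varpi_{\pi_{11}}\equiv \omega_1\wedge\varpi_{p_{11}^1}+\omega_2\wedge\varpi_{p_{12}^1}$ and $d\varpi_{\pi_{12}}\equiv \omega_1\wedge\varpi_{p_{12}^1}-\omega_2\wedge\varpi_{p_{11}^1}$ modulo $\varpi_0,\varpi_1,\varpi_2,\varpi_{\pi_{11}},\varpi_{\pi_{12}}$, together with $d\varpi_1\equiv \omega_1\wedge\varpi_{\pi_{11}}+\omega_2\wedge\varpi_{\pi_{12}}$, $d\varpi_2\equiv \omega_1\wedge\varpi_{\pi_{12}}-\omega_2\wedge\varpi_{\pi_{11}}$, and $d\varpi_0\equiv \omega_1\wedge\varpi_1+\omega_2\wedge\varpi_2$ at the appropriate levels.

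Next, I pass to the dual frame $\{X_0,X_1,X_2,X_{\pi_{11}},X_{\pi_{12}},X_{\omega_1},X_{\omega_2},X_{p_{11}^1},X_{p_{12}^1}\}$ and extract brackets by the standard formula: for any pair of dual vectors $X,Y$ with $\varpi(X)=\varpi(Y)=0$, $d\varpi(X,Y)=-\varpi([X,Y])$. Plugging the basis pairs into the two-forms above reads off each bracket coefficient; for example, $d\varpi_{\pi_{11}}(X_{\omega_1},X_{p_{11}^1})=1$ gives $[X_{\omega_1},X_{p_{11}^1}]\equiv -X_{\pi_{11}}$ modulo $\mathfrak g_{-1}$, and $d\varpi_{\pi_{12}}(X_{\omega_2},X_{p_{11}^1})=-1$ gives $[X_{\omega_2},X_{p_{11}^1}]\equiv X_{\pi_{12}}$. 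All nonzero brackets in the claimed $\mathfrak m_0$ arise this way from $d\varpi_{\pi_{11}}$, $d\varpi_{\pi_{12}}$, $d\varpi_1$, $d\varpi_2$, and $d\varpi_0$; trivial brackets follow because the corresponding coefficient does not appear in any structure equation.

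For $w\in\Sigma_2$, I run the identical calculation on $U_{\pi_{11}\pi_{12}}$ at a point of the codimension-2 submanifold $\{p_{11}^6=p_{12}^6=0\}$. I set $\varpi_{p_{11}^6}:=dp_{11}^6+f\pi_{12}$ and $\varpi_{p_{12}^6}:=dp_{12}^6+g\pi_{12}$, take the coframe $\{\varpi_0,\varpi_1,\varpi_2,\varpi_{\omega_1},\varpi_{\omega_2},\pi_{11},\pi_{12},\varpi_{p_{11}^6},\varpi_{p_{12}^6}\}$ and its dual frame, and read off the brackets exactly as before. Here the filtration is the Morimoto filtration $F^{-1}=D$, $F^{-2}=\partial\hat D$, $F^{-3}=\partial^{(2)}\hat D$, $F^{-4}=T\Sigma(R)$ used in Proposition \ref{hyp-symbol2}; the absence of the $[\mathfrak g_{-3},\mathfrak g_{-1}]\to\mathfrak g_{-4}$ brackets (the $X_0$ relations) reflects exactly the fact that $\partial^{(3)}\hat D=\partial^{(2)}\hat D$ on $\Sigma_2$, which is precisely why $X_0$ becomes central in $\mathfrak m_2$.

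The only genuine obstacle is bookkeeping: one must track which ambiguities in the $\varpi_{p_{ij}^k}$ are absorbed into the indicated congruences and confirm that the bracket coefficients computed from the structure equations do not depend on the choices of $f,g$ modulo the filtration. Since the structure equations were already normalized in Proposition \ref{ell-derived} so that the two-form parts at each filtration level are canonical, this check is straightforward, and both isomorphisms $\mathfrak m_0(w)\cong\mathfrak m_0$ and $\mathfrak m_2(w)\cong\mathfrak m_2$ follow by the same argument used in the hyperbolic and parabolic cases.
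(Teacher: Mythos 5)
Your proposal is correct and follows essentially the same route as the paper: working on $U_{\omega_{1}\omega_{2}}$ (resp.\ $U_{\pi_{11}\pi_{12}}$ at $\{p_{11}^{6}=p_{12}^{6}=0\}$) with the adapted 1-forms $\varpi_{p_{11}^{1}},\varpi_{p_{12}^{1}}$ (resp.\ $\varpi_{p_{11}^{6}},\varpi_{p_{12}^{6}}$), the structure equations from Proposition \ref{ell-derived}, the dual frame, and the identity $d\varpi(X,Y)=-\varpi([X,Y])$ exactly as in the hyperbolic case (Propositions \ref{hyp-symbol1}, \ref{hyp-symbol2}), including the Morimoto filtration on $\Sigma_{2}$. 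Your sample bracket computations and the remark that the vanishing of the $X_{0}$ relations in $\mathfrak m_{2}$ reflects $\partial^{(3)}\hat D=\partial^{(2)}\hat D$ on $\Sigma_{2}$ agree with the paper's argument.
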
 
\begin{proof}
We first show that $\mathfrak m_{0}(m)\cong \mathfrak m_{0}$. 
On $U_{\omega_{1}\omega_{2}}$ in the proof of Proposition \ref{ell-derived}, 
if we set $\varpi_{p_{11}^{1}}:=d{p_{11}^{1}}+f\omega_{2}
,\ \varpi_{p_{12}^{1}}:=d{p_{12}^{1}}+g\omega_{2}$ and 
take a coframe:   
\[\left\{\varpi_{0},\ \varpi_{1},\ \varpi_{2},\ \varpi_{\pi_{11}},\ \varpi_{\pi_{12}},\ 
\omega_{1},\ \omega_{2},\ \varpi_{p_{11}^{1}},\ \varpi_{p_{12}^{1}} \right\},\]
then the structure equations are written as 
\begin{align*}
d\varpi_{i}&\equiv 0 \hspace{0.5cm}(i=0,1,2)\hspace{1.15cm} \quad
\mod\ \varpi_{0},\ \varpi_{1},\ \varpi_{2},\ \varpi_{\pi_{11}},\ \varpi_{\pi_{12}},\\
d\varpi_{\pi_{11}}&\equiv \omega_{1}\wedge \varpi_{p_{11}^{1}}+
\omega_{2}\wedge \varpi_{p_{12}^{1}}\quad    
\mod\ \varpi_{0},\ \varpi_{1},\ \varpi_{2},\ \varpi_{\pi_{11}},\ \varpi_{\pi_{12}},\\
d\varpi_{\pi_{12}}&\equiv \omega_{1}\wedge \varpi_{p_{12}^{1}}  
                    -\omega_{2}\wedge \varpi_{p_{11}^{1}} \quad 
\mod\ \varpi_{0},\ \varpi_{1},\ \varpi_{2},\ \varpi_{\pi_{11}},\ \varpi_{\pi_{12}}.
\end{align*}
\begin{align*}
d\varpi_{0}&\equiv 0 \hspace{5.4cm} \quad \mod\ \varpi_{0},\ \varpi_{1},\ \varpi_{2},\\
d\varpi_{1}&\equiv \omega_{1}\wedge \varpi_{\pi_{11}}+\omega_{2}\wedge \varpi_{\pi_{12}} \hspace{2cm}  
\mod\ \varpi_{0},\ \varpi_{1},\ \varpi_{2}, \varpi_{\pi_{11}}\wedge \varpi_{\pi_{12}},\\
d\varpi_{2}&\equiv \omega_{1}\wedge \varpi_{\pi_{12}}-\omega_{2}\wedge \varpi_{\pi_{11}} \hspace{2cm}  
\mod\ \varpi_{0},\ \varpi_{1},\ \varpi_{2}, \varpi_{\pi_{11}}\wedge \varpi_{\pi_{12}},
\end{align*}
\begin{align*}
d\varpi_{0}\equiv \omega_{1}\wedge \varpi_{1}+\omega_{2}\wedge \varpi_{2},\ 
 \mod\ & \varpi_{0},\ \varpi_{1}\wedge \varpi_{2},\  
\varpi_{1}\wedge \varpi_{\pi_{11}},\ \varpi_{1}\wedge \varpi_{\pi_{12}},\\  
& \varpi_{2}\wedge \varpi_{\pi_{11}},\ \varpi_{2}\wedge \varpi_{\pi_{12}},\ 
\varpi_{\pi_{11}}\wedge \varpi_{\pi_{12}}. 
\end{align*}
We take the dual frame 
$\left\{X_{0},\ X_{1},\ X_{2},\ X_{\pi_{11}},\ X_{\pi_{12}},\ 
X_{\omega_{1}},\ X_{\omega_{2}},\ X_{p_{11}^{1}},\ X_{p_{12}^{1}} \right\}$. 
Then, by the same argument to the hyperbolic case, we have the 
bracket relations of $\mathfrak m_{0}$.\par
Next, we prove the statement for the algebra $\mathfrak m_{2}$. 
On $U_{\pi_{11}\pi_{12}}$ in the proof of Proposition \ref{ell-derived}, 
we set $\varpi_{p_{11}^{6}}:=d{p_{11}^{6}}+f\pi_{12}
,\ \varpi_{p_{12}^{6}}:=d{p_{12}^{6}}+g\pi_{12}$ and 
take a coframe:\\   
$\left\{\varpi_{0},\ \varpi_{1},\ \varpi_{2},\ \varpi_{\omega_{1}},\ \varpi_{\omega_{2}},\ 
\pi_{11},\ \pi_{12},\ \varpi_{p_{11}^{6}},\ \varpi_{p_{12}^{6}} \right\}$, 
then the structure equations at a point on $\Sigma_2$ are given by 
\begin{align*}
d\varpi_{i}&\equiv 0 \hspace{0.5cm}(i=0,1,2)\hspace{0.5cm} \quad
\mod\ \varpi_{0},\ \varpi_{1},\ \varpi_{2},\ \varpi_{\omega_{1}},\ \varpi_{\omega _{2}},\\
d\varpi_{\omega_{1}}&\equiv \pi_{11}\wedge ({dp_{11}^{6}}+f\pi_{12})+\pi_{12}\wedge 
          ({dp_{12}^{6}}+g\pi_{12}) \quad  
            \mod\ \varpi_{0},\ \varpi_{1},\ \varpi_{2},\ \varpi_{\omega_{1}},\ \varpi_{\omega_{2}},\\
d\varpi_{\omega_{2}}&\equiv  \pi_{11}\wedge ({dp_{12}^{6}}+g\pi_{12})-\pi_{12}\wedge 
          ({dp_{11}^{6}}+f\pi_{12}) \quad 
\mod\ \varpi_{0},\ \varpi_{1},\ \varpi_{2},\ \varpi_{\omega_{1}},\ \varpi_{\omega_{2}}.
\end{align*}
\begin{align*}
d\varpi_{0}&\equiv 0 \hspace{5.9cm} \quad \mod\ \varpi_{0},\ \varpi_{1},\ \varpi_{2},\\
d\varpi_{1}&\equiv \varpi_{\omega_{1}}\wedge \pi_{11}+\varpi_{\omega_{2}}\wedge \pi_{12} \hspace{2.6cm}  
\mod\ \varpi_{0},\ \varpi_{1},\ \varpi_{2}, \varpi_{\omega_{1}}\wedge \varpi_{\omega_{2}},\\
d\varpi_{2}&\equiv \varpi_{\omega_{1}}\wedge \pi_{12}-\varpi_{\omega_{2}}\wedge \pi_{11} \hspace{2.6cm}  
\mod\ \varpi_{0},\ \varpi_{1},\ \varpi_{2}, \varpi_{\omega_{1}}\wedge \varpi_{\omega_{2}},
\end{align*}
\begin{align*}
d\varpi_{0}\equiv 0\ 
\mod\ & \varpi_{0},\ \varpi_{1}\wedge \varpi_{2},\ 
\varpi_{1}\wedge \varpi_{\omega_{1}},\ \varpi_{1}\wedge \varpi_{\omega_{2}},\\ 
& \varpi_{2}\wedge \varpi_{\omega_{1}},\ \varpi_{2}\wedge \varpi_{\omega_{2}},\ 
\varpi_{\omega_{1}}\wedge \varpi_{\omega_{2}}.
\end{align*}
Let 
$\left\{X_{0},\ X_{1},\ X_{2},\ X_{\omega_{1}},\ X_{\omega_{2}},\ 
X_{\pi_{11}},\ X_{\pi_{12}},\ X_{p_{11}^{6}},\ X_{p_{12}^{6}} \right\}$  
be the dual frame. Then, by using the same argument to the hyperbolic case, we have the 
bracket relations of $\mathfrak m_{2}$. 
\end{proof}
\section{Construction of singular solutions and the theory of submanifold of the rank 2 prolongation of the Second Jet space}
In section 2 and 3, we studied various properties of the rank 2 prolongations  
$(\Sigma(R), \hat D)$ of single equations $(R,D)$. 
Under these prolongations, we mention the strategy of the construction 
of the geometric singular solutions for each class of equations $(R,D)$. 
Moreover, we construct singular solutions for model equations 
belonging to each class. 
For this purpose, we first consider the rank 2 prolongation $\Sigma(J^2)$ of 
the second jet space $J^2(\mathbb R^2,\mathbb R)$. 
For the 2-jet space $J^2(\mathbb R^2,\mathbb R)$, we denote
the rank 2 prolongation of $J^2(\mathbb R^2,\mathbb R)$ by 
$(\Sigma(J^2), \hat{C}^2)$.
This space $\Sigma(J^2)$ is a submanifold of the Grassmann 
bundle $J(C^2, 2)$.
The geometry of $(\Sigma(J^2), \hat C^2)$ in $J(C^2, 2)$ is studied in \cite{S}. From now on, we refer the reader to \cite{S} for 
the obtained results.    
For an open set $V \subset J^2(\mathbb R^2,\mathbb R)$, 
${\Pi^{2}_{1}}^{-1}(V)$ is covered by $6$ open sets:  
$${\Pi^{2}_{1}}^{-1}(V)=V_{xy}\cup V_{xt}\cup V_{yr}\cup V_{rs}\cup 
V_{rt}\cup V_{st},$$
where $\Pi^{2}_{1}:\Sigma(J^2) \to J^2$ is the projection and each open set is 
given by 
\begin{align*}
V_{xy}:=&\left\{w \in  {\Pi^{2}_{1}}^{-1}(V)\ |\ dx\wedge dy|_{w}\not=0\right\},\  
V_{xt}:=\left\{w \in  {\Pi^{2}_{1}}^{-1}(V)\ |\ dx\wedge dt|_{w}\not=0\right\},\\ 
V_{yr}:=&\left\{w \in  {\Pi^{2}_{1}}^{-1}(V)\ |\ dy\wedge dr|_{w}\not=0\right\},\ 
V_{rs}:=\left\{w \in  {\Pi^{2}_{1}}^{-1}(V)\ |\ dr\wedge ds|_{w}\not=0\right\},\\
V_{rt}:=&\left\{w \in  {\Pi^{2}_{1}}^{-1}(V)\ |\ dr\wedge dt|_{w}\not=0\right\},\  
V_{st}:=\left\{w \in  {\Pi^{2}_{1}}^{-1}(V)\ |\ ds\wedge dt|_{w}\not=0\right\}.  
\end{align*}
The prolongation $\Sigma(J^2)$ has the similar geometric decomposition: 
$\Sigma(J^2)=\Sigma_{0}\cup \Sigma_{1}\cup \Sigma_{2},$ 
where $\Sigma_{i}=\left\{w\in \Sigma(J^2)\ |\ {\rm dim}(w\cap {\rm fiber})=i\right\}$ $(i=0,1,2)$, 
and ``fiber" means that the fiber of $T(J^2)\supset C^2\to T(J^1)$. 
Then, locally, 
\begin{align*}
\Sigma_{0}|_{{\Pi^{2}_{1}}^{-1}(V)}&=V_{xy}|_{{\Pi^{2}_{1}}^{-1}(V)},\quad 
\Sigma_{1}|_{{\Pi^{2}_{1}}^{-1}(V)}=\left\{(V_{xt}\cup V_{yr})\backslash V_{xy}\right\}|_{{\Pi^{2}_{1}}^{-1}(V)},\\
\Sigma_{2}|_{{\Pi^{2}_{1}}^{-1}(V)}&=\left\{(V_{rs}\cup V_{rt}\cup V_{st})\backslash (V_{xy}\cup V_{xt}\cup V_{yr})\right\}|_{{\Pi^{2}_{1}}^{-1}(V)},
\end{align*}
The set $\Sigma_{0}=J^3$ is an open set in $\Sigma(J^2)$ and is a 
$\mathbb R^4$-bundle over $J^2$.
The set $\Sigma_1$ is a codimension 1 submanifold in $\Sigma(J^2)$. 
The set $\Sigma_2$ is a  codimension 2 submanifold in $\Sigma(J^2)$ and is a 
$\mathbb P^2$-bundle over $J^2$. 
In the following, we give the description of the canonical system 
$(\Sigma(J^2),\hat C^2)$ on each coordinate. 
\begin{enumerate} 
\item[(A)] $V_{xy}\cong J^3,\ (x,y,z,p,q,r,s,t,p_{111}, p_{112},p_{122},p_{222})$:\\
$\hat C^2=\left\{\varpi_{0}=\varpi_{1}=\varpi_{2}
=\varpi_{r}=\varpi_{s}=\varpi_{t}=0\right\},$   
where 
$\varpi_{r}=dr-p_{111}dx-p_{112}dy,\  \varpi_{s}=ds-p_{112}dx-p_{122}dy,\ 
\varpi_{t}=dt-p_{122}dx-p_{222}dy$. 
\item[(B)] $V_{xt},\ (x,y,z,p,q,r,s,t,a, B,c,e)$:\\
$\hat C^2=\left\{\varpi_{0}=\varpi_{1}=\varpi_{2}=
\varpi_{y}=\varpi_{r}=\varpi_{s}=0\right\},$ 
where 
$\varpi_{y}=dy-adx-Bdt,\  \varpi_{r}=dr-cdx-(a^2+eB)dt,\ 
\varpi_{s}=ds-edx-adt$. 
\item[(C)] $V_{yr},\ (x,y,z,p,q,r,s,t,a, B,c,e)$:\\
$\hat C^2=\left\{\varpi_{0}=\varpi_{1}=\varpi_{2}=
\varpi_{x}=\varpi_{s}=\varpi_{t}=0\right\},$ 
where 
$\varpi_{x}=dx-ady-Bdr,\  \varpi_{s}=ds-cdy+adr,\ 
\varpi_{t}=dt-edy-(a^2+Bc)dr$. 
\item[(D)] $V_{rs},\ (x,y,z,p,q,r,s,t,B, D,E,F)$:\\
$\hat C^2=\left\{\varpi_{0}=\varpi_{1}=\varpi_{2}=
\varpi_{x}=\varpi_{y}=\varpi_{t}=0\right\},$  
where 
$\varpi_{x}=dx-(DE-BF)dr-Bds,\  \varpi_{y}=dy-Bdr-Dds,\ 
\varpi_{t}=dt-Edr-Fds$. 
\item[(E)] $V_{rt},\ (x,y,z,p,q,r,s,t,A, D,E,F)$:\\
$\hat C^2=\left\{\varpi_{0}=\varpi_{1}=\varpi_{2}=
\varpi_{x}=\varpi_{y}=\varpi_{s}=0\right\},$ 
where 
$\varpi_{x}=dx-Adr+(DE-CF)dt,\  \varpi_{y}=dy+(AF-(DE-CF)E)dr-Ddt$,\\  
$\varpi_{s}=ds-Edr-Fdt$. 
\item[(F)] $V_{st},\ (x,y,z,p,q,r,s,t,A, B,E,F)$:\\
$\hat C^2=\left\{\varpi_{0}=\varpi_{1}=\varpi_{2}=
\varpi_{x}=\varpi_{y}=\varpi_{r}=0\right\},$  
where 
$\varpi_{x}=dx-Ads-Bdt,\  \varpi_{y}=dy-Bds+(BE-AF)dt,\ 
\varpi_{r}=dr-Eds-Fdt$. 
\end{enumerate}
The reason we introduced $\Sigma(J^2)$ is that 
$\Sigma(R)$ is regarded as the subset in $\Sigma(J^2)$. 
More precisely, we need to construct the equivariant 
embedding $\iota:\Sigma(R) \hookrightarrow \Sigma(J^2)$ which give the following commutative diagram:
\begin{align}\label{equivariant1}
&\Sigma(R) \hookrightarrow \Sigma(J^2) \nonumber \\
& \quad  \downarrow \hspace{1.4cm} \downarrow \\ 
& \quad R \hookrightarrow J^2(\mathbb R^2, \mathbb R). \nonumber
\end{align}
Here, the correspondences except for $\iota$ are already given.  
This diagram is an extension of the following commutative diagram. 
\begin{align}\label{equivariant2}
&R^{(1)} \hookrightarrow J^3(\mathbb R^2,\mathbb R) \nonumber \\
& \quad  \downarrow \hspace{1.4cm} \downarrow \\ 
& \quad R \hookrightarrow J^2(\mathbb R^2, \mathbb R). \nonumber
\end{align} 
where $R^{(1)}$ is the prolongation of $(R,D)$ with independence condition. 
In general, for given second order PDE $R=\left\{F=0\right\}$ with independent 
variables $x,y$, this prolongation $R^{(1)}$  corresponds to a 
third order PDE system which is obtained by partial derivation of $F=0$ 
for the two variables $x,y$. 
Hence, $R^{(1)}$ can be regarded naturally as a submanifold in $J^3$ which is 
also the prolongation of $J^2$ with the independence condition.\par  
Let us return to the diagram (\ref{equivariant1}). 
If we can construct the equivariant embedding $\iota:\Sigma(R) \hookrightarrow \Sigma(J^2)$, 
then we can obtain singular solutions $L$ 
by the following strategy: 
\begin{center}
Find an integral manifold $L$ of $(\Sigma(R), \hat D)\subset (\Sigma(J^2), \hat C^2)$ 
passing through the $\Sigma_{1}\cup \Sigma_{2}$. 
\end{center}
Indeed, in the rest of this section, we construct 
the embedding and singular solutions for model 
equations belonging to the each class. \par
\medskip\noindent{\bf Singular solutions of a hyperbolic equation.} 
We consider the wave equation $R=\left\{s=0 \right\}$ as a model equation. 
The differential system $D=\left\{\ \varpi_{0}=\varpi_{1}=\varpi_{2}=0 \right\}$ 
is given by  
$\varpi_{0}=dz-pdx-qdy,\  
\varpi_{1}=dp-rdx,\ 
\varpi_{2}=dq-tdy$.  
The structure equation of $D$ is written as   
\begin{align*}
d\varpi_{0}= -dp\wedge dx-dq\wedge dy,\quad   
d\varpi_{1}= -dr\wedge dx,\quad   
d\varpi_{2}= -dt\wedge dy. 
\end{align*} 
For an open set $U$ in $R$, 
we have the covering $p^{-1}(U)=P_{xy}\cup P_{xt}\cup P_{yr}\cup P_{rt}$ 
of the fibration $p:\Sigma(R)\to R$ 
followed by Theorem \ref{hyp-pro}, 
where 
\begin{align*}
U_{xy}:&=\left\{v\ \in \pi^{-1}(U)\ |\ dx|_{v}\wedge dy|_{v}\not= 0\right\},\  
U_{xt}:=\left\{v\ \in \pi^{-1}(U)\ |\ dx|_{v}\wedge dt|_{v}\not= 0\right\},\\  
U_{yr}:&=\left\{v\ \in \pi^{-1}(U)\ |\ dy|_{v}\wedge dr|_{v}\not= 0\right\},\ 
U_{rt}:=\left\{v\ \in \pi^{-1}(U)\ |\ dr|_{v}\wedge dt|_{v}\not= 0\right\},\\ 
P_{xy}:&=p^{-1}(U) \cap U_{xy},\ 
P_{xt}:=p^{-1}(U) \cap U_{xt},\ 
P_{yr}:=p^{-1}(U) \cap U_{yr},\  
P_{rt}:=p^{-1}(U) \cap U_{rt}. 
\end{align*}
The geometric decomposition 
$\Sigma(R)=\Sigma_{0}\cup\Sigma_{1}\cup\Sigma_{2}$ is given by 
$\Sigma_{0}|_{p^{-1}(U)}=P_{xy},\ 
\Sigma_{1}|_{p^{-1}(U)}=(P_{xt}\cup P_{yr}) 
\backslash P_{xy},\ 
\Sigma_{2}|_{p^{-1}(U)}=P_{rt}\backslash 
(P_{xy}\cup P_{xt}\cup P_{yr})$. 
Now, by using this decomposition, we consider embeddings 
from $\Sigma(R)$ into $\Sigma(J^2)$.\\ 
$({\rm i})$ On the open set $V_{xy}=J^3\subset \Sigma(J^2)$.\par  
On $V_{xy}$, we consider the submanifold 
$\overline \Sigma_{xy}=\left\{s=p_{112}=p_{122}=0\right\}.$ 
On $\overline \Sigma_{xy}$, we have the induced differential system   
$D_{\overline \Sigma_{xy}}=\left\{\varpi_{0}=\varpi_{1}=
\varpi_{2}=\varpi_{r}=\varpi_{t}=0\right\},$ 
where  
$\varpi_{r}=dr-p_{111}dx,\ \varpi_{t}=dt-p_{222}dx.$  
Clearly, this system $(\overline \Sigma_{xy}, D_{\overline \Sigma_{xy}})$ is isomorphic to 
$(P_{xy}, \hat D)\subset (\Sigma(R), \hat D)$. Indeed, this system is equal to 
the third order PDE which is obtained by 
partial derivation of the original equation $s=0$ for the independent 
variables $x,y$. The projection to $R$ of these integral manifolds are regular solutions of the wave equation $s=0$. \\ 
$({\rm ii})$ On the open set $V_{xt}\subset \Sigma(J^2)$.\par  
We consider singular solutions of corank 1 which are the projections of 
integral manifolds of $\Sigma(J^2)$ passing through $\Sigma_1$.
On $V_{xt}$, we consider the submanifold 
$\overline \Sigma_{xt}=\left\{s=a=e=0\right\}.$ 
On $\overline \Sigma_{xt}$, we have the differential system 
$D_{\overline \Sigma_{xt}}=\left\{\varpi_{0}=\varpi_{1}=\varpi_{2}
=\varpi_{y}=\varpi_{r}=0\right\},$ 
where $\varpi_{y}=dy-Bdt,\ \varpi_{r}=dr-cdx.$   
Note that $w\in \Sigma_1 \iff B(w)=0$.
Clearly, this system $(\overline \Sigma_{xt}, D_{\overline \Sigma_{xt}})$ is isomorphic to 
$(P_{xt}, \hat D)\subset (\Sigma(R), \hat D)$. We construct integral 
manifolds of this system in the following.  
Let 
$\iota:S \hookrightarrow \overline \Sigma_{xt}\subset \Sigma(J^2)$
be a graph defined by 
$$(x,\ y(x,t),\ z(x,t),\ p(x,t),\ q(x,t),\ r(x,t),\ t,\ B(x,t),\ c(x,t)) 
\ \textrm{around}\ (x_0,t_0).$$
If $S$ is an integral submanifold of $(\overline \Sigma_{xt}, 
D_{\overline \Sigma_{xt}})$, 
then the following conditions are 
satisfied: 
\begin{align}
&\iota^{*}\varpi_{0}=
\iota^{*}(dz-pdx-qdy)=(z_x-p-qy_x)dx+(z_t-qy_t)dt=0,\label{wave1} \\
&\iota^{*}\varpi_{1}=
\iota^{*}(dp-rdx)=(p_x-r)dx+p_{t}dt=0,\label{wave2}\\
&\iota^{*}\varpi_{2}=
\iota^{*}(dq-tdy)=(q_x-ty_x)dx+(q_t-ty_t)dt=0,\label{wave3}\\
&\iota^{*}\varpi_{y}=
\iota^{*}(dy-Bdt)= y_{x}dx+(y_t-B)dt=0,\label{wave4}\\
&\iota^{*}\varpi_{r}=
\iota^{*}(dr-cdx)=(r_{x}-c)dx+r_{t}dt=0.\label{wave5}
\end{align}
We have $y(x,t)=y(t),\ B(x,t)=y^\prime(t)$ from $(\ref{wave4})$, and note that 
the condition passing through $\Sigma_{1}$ is $B(t_0)=y^{\prime}(t_0)=0$. 
From $(\ref{wave3})$, we have $q=\int ty_tdt=ty-Y$ where $Y:=\int ydt$.    
From $(\ref{wave1})$, $z=\int (ty-Y)y_tdt+z_{0}(x)
=\frac{ty^2}{2}+\frac{1}{2}\int y^2dt -Yy+z_{0}(x)$ where 
$z_0(x)$ is a function on $S$ depending only $x$, and 
$p=z_{x}=z_{0}^{\prime}(x)$. For $(\ref{wave2})$, the function $p$ satisfies 
$p_t=0$ and we have $r=z_{0}^{\prime\prime}(x)$. 
For $(\ref{wave5})$, the function $r$ satisfies 
$r_t=0$ and we have $c=z_{0}^{\prime\prime\prime}(x)$.  
Therefore, we obtain the solution of $s=0$ around $(x_0,t_0)$ given by 
\begin{eqnarray*}\label{wavesingular}
(x,\ y(x,t),\ z(x,t),\ p(x,t),\ q(x,t),\ r(x,t),\ t,\ B(x,t),\ c(x,t)) 
\hspace{3cm} \nonumber \\
 =(x,\ y(t),\ \frac{ty^2}{2}+\frac{1}{2}\int y^2dt -y\int ydt+z_{0}(x),\ z_{0}^{\prime}(x),\ ty-\int ydt,\ z_{0}^{\prime\prime}(x),\ t,y^\prime ,\ z_{0}^{\prime\prime\prime}(x)). \nonumber 
\end{eqnarray*}
for arbitrary functions $y(t)$ and $z_0(x)$.
These integral surfaces with the condition $y^{\prime}(t_0)=0$ are geometric singular solutions of corank 1. \\ 
$({\rm iii})$ On the open set $V_{yr}\subset \Sigma(J^2)$.\par 
We omit this case since $V_{yr}$ is isomorphic to $V_{xt}$ by the symmetry 
for $x$ and $y$. \\
$({\rm iv})$ On the open set $V_{rt}\subset \Sigma(J^2)$.\par 
We will consider singular solutions of corank 2 which are the projections of 
integral manifolds of $\Sigma(J^2)$ passing through $\Sigma_2$.
On $V_{rt}$, we consider the submanifold 
$\overline \Sigma_{rt}=\left\{s=E=F=0\right\}.$ 
On $\overline \Sigma_{rt}$, we have the induced differential system:  
\[D_{\overline \Sigma_{rt}}=\left\{\varpi_{0}=\varpi_{1}=
\varpi_{2}=\varpi_{x}=\varpi_{y}=0\right\},\] 
where $\varpi_{x}=dx-Adr,\ \varpi_{y}=dy-Ddt.$   
Note that $w\in \Sigma_2 \iff A(w)=D(w)=0$.
This system $(\overline \Sigma_{rt}, D_{\overline \Sigma_{rt}})$ 
is isomorphic to 
$(P_{rt}, \hat D)\subset (\Sigma(R), \hat D)$. We construct integral 
manifolds of this system in the following.  
Let 
$\iota:S \hookrightarrow \overline \Sigma_{rt}\subset \Sigma(J^2)$
be a graph defined by 
$$(x(r,t),\ y(r,t),\ z(r,t),\ p(r,t),\ q(r,t),\ r,\ t,\ A(r,t),\ D(r,t)) 
\ \textrm{around}\ (r_0,t_0).$$
If $S$ is an integral submanifold of $(\overline \Sigma_{rt}, 
D_{\overline \Sigma_{rt}})$, 
then the following conditions are 
satisfied: 
\begin{align}
&\iota^{*}\varpi_{0}=
\iota^{*}(dz-pdx-qdy)=(z_r-px_r-qy_r)dr+(z_t-px_t-qy_t)dt=0,\label{wave6} \\
&\iota^{*}\varpi_{1}=
\iota^{*}(dp-rdx)=(p_r-rx_r)dr+(p_{t}-rx_t)dt=0,\label{wave7}\\
&\iota^{*}\varpi_{2}=
\iota^{*}(dq-tdy)=(q_r-ty_r)dr+(q_t-ty_t)dt=0,\label{wave8}\\
&\iota^{*}\varpi_{y}=
\iota^{*}(dx-Adr)= (x_r-A)dr+x_tdt=0,\label{wave9}\\
&\iota^{*}\varpi_{r}=
\iota^{*}(dy-Ddt)=y_{r}dr+(y_{t}-D)dt=0.\label{wave10}
\end{align}
From $(\ref{wave10})$, we have $y(r,t)=y(t),\ D(x,t)=y^\prime(t)$.
From $(\ref{wave9})$, we have $x(r,t)=x(r),\ A(x,t)=x^\prime(r)$.
From $(\ref{wave8})$, $q=\int ty^\prime dt =ty-Y$ where $Y:=\int ydt.$
From $(\ref{wave7})$, $p=\int rx^\prime dr =rx-X$ where $X:=\int xdr.$
From $(\ref{wave6})$, 
$
z=\frac{1}{2}\left(rx^2+ty^2+\int x^2 dr+\int y^2dt  \right)
-\left(x\int x dr+y\int ydt \right).
$
Hence, we get the solution of $s=0$ around $(x_0,t_0)$ on $U_{rt}$ given by 
\begin{eqnarray}
(x(r,t),\ y(r,t),\ z(r,t),\ p(r,t),\ q(r,t),\ r,\ t,\ A(r,t),\ D(r,t))
\hspace{4cm} \nonumber \\
 =\left( x(r),\ y(t),\ \frac{1}{2}\left(rx^2+ty^2+\int x^2 dr+\int y^2dt  \right)-\left(x\int x dr+y\int ydt \right)\right. \hspace{1cm} \nonumber \\
\hspace{7cm} \left.,\ rx-\int xdr,\ ty-\int ydt,\ r,\ t,\ x^\prime(r),\ y^\prime(t)
\right) . 
\nonumber 
\end{eqnarray}
for arbitrary functions $x(r)$ and $y(t)$.
These integral surfaces with the condition 
$x^\prime (r_0)=y^\prime (t_0)=0$ are geometric singular solutions of 
corank 2. \par
\medskip\noindent{\bf Singular solutions of a parabolic equation.} 
We consider the equation $R=\left\{r=0\right\}$. 
The differential system $D=\left\{\ \varpi_{0}=\varpi_{1}=\varpi_{2}=0\right\}$ is given by  
$\varpi_{0}=dz-pdx-qdy,\ 
\varpi_{1}=dp-sdy,\ 
\varpi_{2}=dq-sdx-tdy$.   
The structure equation of $D$ is written as   
\begin{align*}
d\varpi_{0}= -dp\wedge dx-dq\wedge dy,\quad 
d\varpi_{1}= -ds\wedge dy,\quad
d\varpi_{2}= -ds\wedge dx-dt\wedge dy.  
\end{align*}
Let $U$ be an open set in $R$. We have the covering 
$p^{-1}(U)=P_{xy}\cup P_{xt}\cup P_{st}$ 
of the fibration $p:\Sigma(R)\to R$ from Lemma \ref{par-lem}, 
where  
$U_{xy}:=\left\{v \in \pi^{-1}(U)\ |\ dx|_{v}\wedge dy|_{v}\not= 0\right\}$,
$U_{xt}:=\left\{v \in \pi^{-1}(U)\ |\ dx|_{v}\wedge dt|_{v}\not= 0\right\},\   
U_{st}:=\left\{v \in \pi^{-1}(U)\ |\ ds|_{v}\wedge dt|_{v}\not= 0\right\},\    
P_{xy}:=p^{-1}(U) \cap U_{xy},\ 
P_{xt}:=p^{-1}(U) \cap U_{xt},\  
P_{st}:=p^{-1}(U) \cap U_{st}$.  
The geometric decomposition $\Sigma(R)=\Sigma_{0}\cup\Sigma_{1}\cup\Sigma_{2}$ 
is given by 
$\Sigma_{0}|_{p^{-1}(U)}=P_{xy},\quad 
\Sigma_{1}|_{p^{-1}(U)}=P_{xt} 
\backslash P_{xy},\quad 
\Sigma_{2}|_{p^{-1}(U)}=P_{st}\backslash 
(P_{xy}\cup P_{xt})$.  
This prolongation $\Sigma(R)$ is realized as a submanifold of 
$\Sigma(J^2)$ as follows:\\ 
$({\rm i})$ On the open set $J^3=V_{xy}\subset \Sigma(J^2)$.\par 
On $V_{xy}$ in $\Sigma(J^2)$, we consider the submanifold given by   
$\overline \Sigma_{xy}=\left\{r=p_{111}=p_{112}=0\right\}.$ 
We have the induced differential system 
$D_{\overline \Sigma_{xy}}=\left\{\varpi_{0}=\varpi_{1}=\varpi_{2}=
\varpi_{s}=\varpi_{t}=0\right\}$
on $\overline \Sigma_{xy}$,    
where $\varpi_{s}=ds-p_{122}dy,\ \varpi_{t}=dt-p_{122}dx-p_{222}dy$.  
This system $(\overline \Sigma_{xy}, D_{\overline \Sigma_{xy}})$ is 
isomorphic to 
$(P_{xy}, \hat D)\subset (\Sigma(R), \hat D)$.  
Indeed, this system is equal to the third order PDE which is obtained by 
partial derivation of the original equation $r=0$ for the independent 
variables $x,y$. The projection to $R$ of these integral manifolds are regular solutions of the equation $r=0$. \\ 
$({\rm ii})$ On the open set $V_{st}\subset \Sigma(J^2)$.\par  
We will consider singular solutions of corank 1 and 2 
which are obtained by the projections of 
integral manifolds of $\Sigma(J^2)$ passing through smooth points 
in $\Sigma(R)$.  
Recall that $\Sigma_1\backslash \{\textrm{singular\ points} \} \subset 
\Sigma(R)$ is covered by $P_{st}$. Hence, we work on $V_{st}$ and consider 
the submanifold given by  
$\overline \Sigma_{st}=\left\{r=E=F=0\right\}.$ 
We have the induced differential system    
$D_{\overline \Sigma_{st}}=\left\{\varpi_{0}=\varpi_{1}=\varpi_{2}
=\varpi_{x}=\varpi_{y}=0\right\}$ on ${\overline \Sigma_{st}}$, 
where $\varpi_{x}=dx-Ads-Bdt,\ \varpi_{y}=dy-Bds.$ 
Note that 
\begin{eqnarray*}
 w\in \Sigma_1\backslash \{\textrm{singular\ points} \} 
& \iff & A(w)\not= 0 , B(w)=0  \\
 w\in \Sigma_2 \hspace{3.3cm} & \iff & A(w)=B(w)=0.
\end{eqnarray*}
This system $(\overline \Sigma_{st}, D_{\overline \Sigma_{st}})$ is isomorphic to 
$(P_{st}, \hat D)\subset (\Sigma(R), \hat D)$. 
We construct integral manifolds of this system. 
Let  
$\iota:S \hookrightarrow \overline \Sigma_{st}\subset \Sigma(J^2)$ be a graph 
defined by 
$$(x(s,t),\ y(s,t),\ z(s,t),\ p(s,t),\ q(s,t),\ s,\ t,\ A(s,t),\ B(x,t))
\ \textrm{around}\ (s_0,t_0).$$ 
If $S$ is an integral manifold of $D_{\overline \Sigma_{st}}$, then the following 
conditions are satisfied:   
\begin{align}
\iota^{*}\varpi_{0}:=&(z_s-px_s-qy_s)ds+(z_t-px_t-qy_t)dt=0,\label{heat1} \\
\iota^{*}\varpi_{1}:=&(p_s-sy_s)ds+(p_{t}-sy_{t})dt=0,\label{heat2}\\
\iota^{*}\varpi_{2}:=&(q_s-sx_s-ty_s)ds+(q_t-sx_t-ty_t)dt=0,\label{heat3}\\
\iota^{*}\varpi_{x}:=&(x_{s}-A)ds+(x_t-B)dt=0,\label{heat4}\\
\iota^{*}\varpi_{y}:=&(y_{s}-B)ds+y_{t}dt=0.\label{heat5}
\end{align}
We have $y(s,t)=y(s),\ B(s,t)=y^\prime (s)$ from $(\ref{heat5})$.  
From $(\ref{heat4})$, $x=ty^\prime(s)+x_{0}(s)$, where $x_0(s)$ is a function on $S$ depending only $s$, and $A=x_s=ty^{\prime\prime}(s)+x_{0}^\prime (s)$. 
From $(\ref{heat2})$, $p=\int sy_sds=sy-Y$ where $Y:=\int yds$.    
From $(\ref{heat3})$,  we also have $q=tsy^\prime +sx_0-\int x_0 ds$. 
Similarly, from $(\ref{heat1})$, $z=t(sy-Y)y^\prime +syx_0 +\int (yx_0)ds 
-x_0\int yds-y\int x_0ds$. 
Hence we have solutions of $r=0$ given by 
\begin{eqnarray*}
(x(s,t),\ y(s,t),\ z(s,t),\ p(s,t),\ q(s,t),\ s,\ t,\ A(s,t),\ B(s,t)) 
\hspace{3cm} \nonumber \\
 =(ty^\prime(s)+x_{0}(s),\ y(s),\ t(sy-\int yds)y^\prime +syx_0 +\int (yx_0)ds 
-x_0\int yds-y\int x_0ds, \nonumber \\ 
\hspace{4cm} sy-\int yds,\ tsy^\prime +sx_0-\int x_0 ds,\ s,\ t,\ ty^{\prime\prime}+x_{0}^\prime ,\ y^\prime ). \nonumber 
\end{eqnarray*}
for arbitrary functions $y(s)$ and $x_0(s)$.
These integral surfaces which satisfy the condition 
$$
A(s_0,t_0)=t_0 y^{\prime\prime}(s_0)+x_{0}^\prime(s_0) \not=0\ ,\  
 B(s_0)=y^{\prime}(s_0)=0
$$
are geometric singular solutions of corank 1. On the other hand,  
these integral with the condition 
$$
A(s_0,t_0)=t_0 y^{\prime\prime}(s_0)+x_{0}^\prime(s_0) =0\ ,\  
 B(s_0)=y^{\prime}(s_0)=0
$$
are geometric singular solutions of corank 2. \par
\medskip\noindent{\bf Singular solutions of an elliptic equation.} 
We consider the Laplace equation $R=\left\{r+t=0\right\}$. 
The differential system $D=\left\{\varpi_{0}=\varpi_{1}=\varpi_{2}=0\right\}$ is given by  
$\varpi_{0}=dz-pdx-qdy,\ 
\varpi_{1}=dp-rdx-sdy,\  
\varpi_{2}=dq-sdx+rdy$.
The structure equation of $D$ is  expressed as 
\begin{align*}
d\varpi_{0}= -dp\wedge dx-dq\wedge dy,\  
d\varpi_{1}= -dr\wedge dx-ds\wedge dy,\  
d\varpi_{2}= -ds\wedge dx+dr\wedge dy. 
\end{align*}
Then, for an open set $U \subset R$, we have the covering 
$p^{-1}(U)=P_{xy}\cup P_{rs}$ of the fibration $p:\Sigma(R)\to R$, 
where  
\begin{align*}
U_{xy}:&=\left\{v \in \pi^{-1}(U)\ |\ dx|_{v}\wedge dy|_{v}\not= 0\right\},\  
U_{rs}:=\left\{v \in \pi^{-1}(U)\ |\ dr|_{v}\wedge ds|_{v}\not= 0\right\},\\ 
P_{xy}:&=p^{-1}(U) \cap U_{xy},\  
P_{rs}:=p^{-1}(U) \cap U_{rs}.
\end{align*}
The geometric decomposition $\Sigma(R)=\Sigma_{0}\cup\Sigma_{2}$ 
is given by 
$\Sigma_{0}|_{p^{-1}(U)}=P_{xy},\ 
\Sigma_{2}|_{p^{-1}(U)}=P_{rs}\backslash P_{xy}$. 
This prolongation $\Sigma(R)$ is realized as a submanifold of 
$\Sigma(J^2)$ as follows:\\ 
$({\rm i})$ $J^3=V_{xy}\subset \Sigma(J^2)$.\par 
On $V_{xy}$, we consider the submanifold given by \\ 
$\overline \Sigma_{xy}=\left\{r+t=0,\ 
p_{111}=-p_{122},\ p_{112}=-p_{222}\right\}.$
We have the induced differential system    
$D_{\overline \Sigma_{xy}}=\left\{\varpi_{0}=\varpi_{1}=
\varpi_{2}=\varpi_{r}=\varpi_{s}=0\right\}$ on $\overline \Sigma_{xy}$, 
where $\varpi_{r}=dr-p_{111}dx-p_{112}dy, \ \varpi_{s}=ds-p_{112}dx+p_{111}dy.$   
This system $(\overline \Sigma_{xy}, 
D_{\overline \Sigma_{xy}})$ is isomorphic to 
$(P_{xy}, \hat D)\subset (\Sigma(R), \hat D)$.  
Indeed, this system is equal to the third order PDE which is obtained by 
partial derivation of the original equation $r+t=0$ for the independent 
variables $x,y$. The projection to $R$ of integral manifolds are 
regular solutions of the wave equation $r+t=0$. \\ 
$({\rm ii})$ On $V_{rs}\subset \Sigma(J^2).$\par  
We will consider singular solutions of corank 2 
which are the projections of 
integral manifolds of $\Sigma(J^2)$ passing through $\Sigma_2$.  
On $V_{rs}$, we consider the submanifold given by  
$\overline \Sigma_{rs}=\left\{r+t=0, E=-1, F=0\right\}.$ 
We have the induced differential system    
$D_{\overline \Sigma_{rs}}=\left\{\varpi_{0}=\varpi_{1}=
\varpi_{2}=\varpi_{x}=\varpi_{y}=0\right\}$ on $\overline \Sigma_{rs}$, 
where $\varpi_{x}=dx+Ddr-Bds,\ \varpi_{y}=dy-Bdr-Dds.$    
Recall that $w\in \Sigma_2 \iff B(w)=D(w)=0$.
This system $(\overline \Sigma_{rs}, D_{\overline \Sigma_{rs}})$ is isomorphic to 
$(P_{rs}, \hat D)\subset (\Sigma(R), \hat D)$. 
We construct integral manifolds of this system. 
Let  
$\iota:S \hookrightarrow \overline \Sigma_{rs}\subset \Sigma(J^2)$ be a graph 
defined by 
$$(x(r,s),y(r,s),z(r,s),p(r,s),q(r,s),r,s,B(r,s),D(r,s))
\ \textrm{around}\ (r_0,s_0).$$ 
If $S$ is an integral manifold of $D_{\overline \Sigma_{rs}}$, then the following 
conditions are satisfied:   
\begin{align}
\iota^{*}\varpi_{0}:=&(z_r-px_r-qy_r)dr+(z_s-px_s-qy_s)ds=0,\label{laplace1} \\
\iota^{*}\varpi_{1}:=&(p_r-rx_r-sy_r)dr+(p_s-rx_s-sy_s)ds=0,\label{laplace2}\\
\iota^{*}\varpi_{2}:=&(q_r-sx_r+ry_r)dr+(q_s-sx_s+ry_s)ds=0,\label{laplace3}\\
\iota^{*}\varpi_{x}:=&(x_r+D)dr+(x_s-B)ds=0,\label{laplace4}\\
\iota^{*}\varpi_{y}:=&(y_{r}-B)dr+(y_s-D)ds=0.\label{laplace5}
\end{align}
From $(\ref{laplace4})$ and $(\ref{laplace5})$, a complex function 
$f(z):=y(r,s)+ix(r,s) \ (z:=r+is)$ must be a holomorphic function.
From $(\ref{laplace2})$ and $(\ref{laplace3})$, $p(r,s), q(r,s)$ are 
considered as solutions of a differential equation
\begin{eqnarray}
& & q_s=sx_s-ry_s\ ,\ p_r=rx_r+sy_r\label{laplace6}\\ 
& & q_r=sx_r-ry_r\ ,\ p_s=rx_s+sy_s\label{laplace7} .
\end{eqnarray}
for given functions $x_r=-y_s, y_r=x_s$.
Then, we also get Cauchy-Riemann equation $q_r=-p_s, \ q_s=p_r$ from 
Cauchy-Riemann equation for $y(r,s), x(r,s)$. Hence a complex function 
$g(z):=p(r,s)+iq(r,s) \ (z:=r+is)$ is also holomorphic.
From $(\ref{laplace1})$, $z(r,s)$ is considered as  a solution of a differential equation
\begin{equation}
z_r = px_r+qy_r\ ,\ z_s=px_s+qy_s \label{laplace8}\\
\end{equation}
for given functions $p, q, x_r=-y_s, y_r=x_s$.

Conversely, for a given holomorphic function $f(z)=y(r,s)+ix(r,s) \ (z:=r+is)$
we consider the differential equation $(\ref{laplace6}), (\ref{laplace7})$
 for $p, q$ where $x, y$ are given functions.
Then, the differential equation is Frobenius since $y(r,s)$ and $x(r,s)$ satisfy Cauchy-Riemann equation. Therefore, the existence of the solution of $(\ref{laplace6}), (\ref{laplace7})$ is guaranteed and $g(z):=p(r,s)+iq(r,s) \ (z:=r+is)$ is holomorphic. Next, we consider
the differential equation $(\ref{laplace8})$ for $z$ where $x, y, p, q$ are given. Then, this differential equation is Frobenius since $f(z)$ and $g(z)$ are 
holomorphic functions and have solutions.
Finally, let $f(z)=y(r,s)+ix(r,s)\ (z:=r+is)$ be a holomorphic function and $p(r,s), q(r,s), z(r,s)$ be the functions obtained by 
the above construction. Then, 
$$
(x(r,s),\ y(r,s),\ z(r,s),\ p(r,s),\ q(r,s),\ r,\ s,\ y_r(s,t),\ y_s(s,t))
$$
is a integral surface. These integral surfaces which satisfy the condition 
$y_r(s_0,t_0)=y_s(s_0,t_0)=0$ are geometric singular solutions of corank 2. 
\section{Tower constructions of special rank 4 distributions.}
In section 2 and 3, we studied geometric structures of rank 2 prolongations 
for each class of equations. 
In this section, we define special rank 4 distributions which 
are generalization of distributions 
induced by PDEs and construct tower structures 
of these distributions by successive rank 2 prolongations. 
\begin{definition}
Let $R$ be a $k+6$ dimensional manifold ($k\geq 0$), 
and $D$ be a differential system of rank $4$ on $R$. 
Then, 

(i)\ $(R,D)$ is {\it hyperbolic type} at $w\in R$ if there exists a 
local coframe $\left\{\varpi_{i}, \theta_{j}, \omega_{j}, \pi_{j}\right\}$ 
$(i=1,...,k,$  $j=1,2)$ around $w\in R$ such that  
$D=\{\varpi_{i}=\theta_{j}=0 \}$ 
around $w\in R$ and the following structure equation holds at $w$:
\begin{align}
d\varpi_{i}&\equiv 0 
\hspace{2cm} \mod \ \varpi_{i}, \theta_{j} \nonumber \\
d\theta_{1}&\equiv \omega_{1}\wedge \pi_{1} \quad \quad \ \mod\ \varpi_{i}, \theta_{j},\\
d\theta_{2}&\equiv \omega_{2}\wedge \pi_{2} \quad \quad \ \mod\ \varpi_{i}, \theta_{j}. \nonumber
\end{align}
(ii)\ $(R,D)$ is {\it parabolic type} at $w\in R$ if there exists a 
local coframe $\left\{\varpi_{i}, \theta_{j}, \omega_{j}, \pi_{j}\right\}$ 
$(i=1,..,k,\ j=1,2)$ around $w\in R$ such that 
$D=\{\varpi_{i}=\theta_{j}=0 \}$ 
around $w\in R$ and the following structure equation holds at $w$:
\begin{align}
d\varpi_{i}&\equiv 0 
\hspace{3.3cm} \mod \ \varpi_{i}, \theta_{j} \nonumber \\
d\theta_{1}&\equiv \quad \quad \quad \quad \ \omega_{2}\wedge \pi_{1} \quad \mod\ \varpi_{i}, \theta_{j},\\
d\theta_{2}&\equiv \omega_{1}\wedge \pi_{1}+\omega_{2}\wedge \pi_{2} \quad \mod\ \varpi_{i}, \theta_{j}. \nonumber
\end{align}
(iii)\ $(R,D)$ is {\it elliptic type} at $w\in R$ if there exists a 
local coframe $\left\{\varpi_{i}, \theta_{j}, \omega_{j}, \pi_{j}\right\}$ 
$(i=1,..,k,\ j=1,2)$ around $w\in R$ such that 
$D=\{\varpi_{i}=\theta_{j}=0 \}$ 
around $w\in R$ and the following structure equation holds at $w$:
\begin{align}
d\varpi_{i}&\equiv 0 
\hspace{3.3cm} \mod \ \varpi_{i}, \theta_{j} \nonumber \\
d\theta_{1}&\equiv \omega_{1}\wedge \pi_{1}+\omega_{2}\wedge \pi_{2} \quad \mod\ \varpi_{i}, \theta_{j},\\
d\theta_{2}&\equiv \omega_{1}\wedge \pi_{2}-\omega_{2}\wedge \pi_{1} \quad \mod\ \varpi_{i}, \theta_{j}. \nonumber
\end{align}
\end{definition}
\begin{proposition}\label{rank4} 
Let $(R,D)$ be a hyperbolic type, parabolic type or elliptic type. Then 
 the first derived system $\partial D$ of $D$ is a 
subbundle of rank $6$ and the Cauchy characteristic system 
$Ch(D)$ of $D$ is trivial, that is $Ch(D)=\left\{0\right\}$. 
\end{proposition}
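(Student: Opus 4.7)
The plan is to work at the level of annihilators, using the formulas $(\partial D)^{\perp}=\{\varpi\in D^{\perp}\mid d\varpi\equiv 0\bmod D^{\perp}\}$ for the derived system and $Ch(D)(x)=\{X\in D(x)\mid X\lrcorner\, d\varpi\equiv 0\bmod D^{\perp}\ \text{for all}\ \varpi\in D^{\perp}\}$ for the Cauchy characteristic system. Since the manifold has dimension $k+6$ and $D=\{\varpi_i=\theta_j=0\}$ is cut out by $k+2$ independent $1$-forms, $D^{\perp}$ has rank $k+2$, and showing $(\partial D)^{\perp}=\langle\varpi_1,\dots,\varpi_k\rangle$ in each case will immediately give $\operatorname{rank}\partial D=(k+6)-k=6$. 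The key observation in each case is that $\{\varpi_i,\theta_j,\omega_j,\pi_j\}$ is a coframe, so the $2$-forms $\omega_i\wedge\pi_j$ appearing on the right-hand sides of the structure equations are linearly independent modulo $D^{\perp}\wedge T^{*}R$.

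First, I would handle the derived system. The structure equations give $d\varpi_i\equiv 0\bmod D^{\perp}$ immediately, so $\langle\varpi_i\rangle\subset(\partial D)^{\perp}$. To show equality, take a candidate $\eta=\sum a_i\varpi_i+b_1\theta_1+b_2\theta_2$ and compute $d\eta\bmod D^{\perp}$: in the hyperbolic case this is $b_1\omega_1\wedge\pi_1+b_2\omega_2\wedge\pi_2$, in the parabolic case $b_2\omega_1\wedge\pi_1+b_1\omega_2\wedge\pi_1+b_2\omega_2\wedge\pi_2$, and in the elliptic case $b_1\omega_1\wedge\pi_1+b_2\omega_1\wedge\pi_2-b_2\omega_2\wedge\pi_1+b_1\omega_2\wedge\pi_2$. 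In each case the linear independence of the $\omega_i\wedge\pi_j$ modulo $D^{\perp}\wedge T^{*}R$ forces $b_1=b_2=0$, establishing $(\partial D)^{\perp}=\langle\varpi_i\rangle$.

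Next, for the Cauchy characteristic system, let $X\in D$ (so $\varpi_i(X)=\theta_j(X)=0$). Since $d\varpi_i\equiv 0\bmod D^{\perp}$, the condition $X\lrcorner\, d\varpi_i\in D^{\perp}$ is automatic for any $X\in D$, so it imposes no constraint. The real content comes from $d\theta_1$ and $d\theta_2$. Using $X\lrcorner(\omega\wedge\pi)=\omega(X)\pi-\pi(X)\omega$, the condition $X\lrcorner\, d\theta_j\equiv 0\bmod D^{\perp}$ together with the linear independence of $\omega_1,\omega_2,\pi_1,\pi_2$ modulo $D^{\perp}$ will force $\omega_1(X)=\omega_2(X)=\pi_1(X)=\pi_2(X)=0$ in each of the three cases. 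In the hyperbolic case this is immediate from the two equations separately; in the parabolic case $d\theta_1$ yields $\omega_2(X)=\pi_1(X)=0$ and then $d\theta_2$ yields $\omega_1(X)=\pi_2(X)=0$; in the elliptic case $d\theta_1$ alone already gives $\omega_1(X)=\omega_2(X)=\pi_1(X)=\pi_2(X)=0$. Combined with the defining equations of $D$, this means every element of the coframe annihilates $X$, so $X=0$ and $Ch(D)=\{0\}$.

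There is no serious obstacle: the proof is a bookkeeping exercise in which the only subtle point is to treat the parabolic case carefully, since $d\theta_1$ and $d\theta_2$ share the factor $\pi_1$ and one must use both equations to rule out a nontrivial $\theta$-component of $\eta$ (for the derived system) or to kill all four of $\omega_j(X),\pi_j(X)$ (for $Ch(D)$). Everything else reduces to the linear independence of the $1$-forms $\omega_j,\pi_j$ modulo $D^{\perp}$, which holds by hypothesis.
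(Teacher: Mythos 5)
Your proposal is correct: the annihilator computation for $\partial D$ and the contraction argument for $Ch(D)$ are exactly the routine verification that the paper compresses into the single sentence ``This statement is obtained by the very definitions,'' so you are not taking a different route, just writing out the definitions' consequences case by case. The case analysis (including the extra care in the parabolic case, where both $d\theta_1$ and $d\theta_2$ are needed) is accurate and complete.
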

\begin{proof}
This statement is obtained by the very definitions. 
\end{proof} 
\begin{remark}
In fact, the converse of the above proposition also holds. Namely, 
let $D$ be a differential system of rank $4$ on a 
$k+6$ dimensional manifold $R$ with rank $\partial D=6$, $Ch(D)=\{ 0\}$. Then, for any $w\in R$, $(R,D)$ is 
a hyperbolic type, parabolic type or elliptic type at $w$ (\cite{S2}).
\end{remark}
\begin{proposition}\label{tower} \ 
\begin{enumerate}
\item[$(${\rm i}$)$] 
If $(R,D)$ is locally hyperbolic, then the rank $2$ prolongation 
$(\Sigma(R),\hat D)$ of 
$(R,D)$ is also hyperbolic at any point in $\Sigma(R)$. Moreover, $\Sigma(R)$ is a $T^2$-bundle over $R$.
\item[$(${\rm ii}$)$] If $(R,D)$ is locally parabolic, then 
$(\Sigma(R)\backslash \left\{singular\ points \right\},\hat D)$ is also parabolic at any point in $\Sigma(R)\backslash \left\{singular\ points \right\}$. 
Moreover, $\Sigma(R)\backslash \left\{singular\ points \right\}$ is a $S^1 \times \mathbb{R}$-bundle over $R$. 
\item[$(${\rm iii}$)$]
If $(R,D)$ is locally elliptic, then the rank $2$ prolongation $(\Sigma(R),\hat D)$ of 
$(R,D)$ is also elliptic at any point in $\Sigma(R)$. Moreover, $\Sigma(R)$ is a $S^2$-bundle over $R$.
\end{enumerate}  
\end{proposition}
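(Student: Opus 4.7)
The plan is to mimic the analyses of Theorems \ref{hyp-pro}, \ref{par-topology} and \ref{ell-topology} together with the structure equation computations of Section 3, exploiting the formal identity between the structure equations (\ref{hypequ})--(\ref{ellequ}) and those of Definition 5.1 once the pair $(\theta_1,\theta_2)$ is identified with $(\varpi_1,\varpi_2)$. The crucial observation is that for any $2$-plane $v \subset D(x)$, the forms $\varpi_i$ ($i=1,\ldots,k$) vanish on $v$, and then the congruence $d\varpi_i \equiv 0 \mod \varpi_i,\theta_j$ forces $d\varpi_i|_v = 0$ automatically; hence $v$ is an integral element of $(R,D)$ if and only if $d\theta_1|_v = d\theta_2|_v = 0$. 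These two conditions coincide with those analyzed in Section 2, so the inhomogeneous Grassmann coordinate calculation on the six charts of $J(D,2)|_U$ indexed by pairs from $\{\omega_1,\omega_2,\pi_1,\pi_2\}$ reproduces verbatim the defining equations of $\Sigma(R)$ obtained in the PDE case. The fiber topology claims then follow immediately: $T^2$ in the hyperbolic case, $S^2$ in the elliptic case, and pinched torus in the parabolic case; the unique pinched point on each fiber lies in $P_{\pi_1\pi_2}$ where all four inhomogeneous coordinates vanish, and excising it leaves a cylinder $S^1 \times \mathbb{R}$.

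To verify that $(\Sigma(R),\hat D)$ is again of the prescribed type, I would compute the structure equations of $\hat D$ on each chart of the covering of $\Sigma(R)$, paralleling Propositions \ref{hyp-derived}, \ref{par-derived} and \ref{ell-derived}. For instance, in the hyperbolic case on $P_{\omega_1\omega_2}$, introducing the constraint forms $\varpi_{\pi_j} := \pi_j - p_{jj}^1 \omega_j$ and completing to a coframe by 1-forms of the shape $\varpi_{p_{jj}^1} := dp_{jj}^1 + (\cdots)$, one should obtain
\begin{align*}
d\varpi_{\pi_1} &\equiv \omega_1 \wedge \varpi_{p_{11}^1}, \\
d\varpi_{\pi_2} &\equiv \omega_2 \wedge \varpi_{p_{22}^1}
\end{align*}
modulo the defining ideal of $\hat D$, which is exactly the hyperbolic structure equation of Definition 5.1 on the $(k+8)$-dimensional manifold $\Sigma(R)$, with the enlarged collection $\{p^*\varpi_i, p^*\theta_j\}$ playing the role of the auxiliary $\varpi$-forms. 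The same pattern is expected on the charts $P_{\omega_1\pi_2}, P_{\omega_2\pi_1}, P_{\pi_1\pi_2}$ and in the parabolic and elliptic cases, with the appropriate choice of coframe on each stratum $\Sigma_0, \Sigma_1, \Sigma_2$ dictated by Lemmas \ref{par-lem} and \ref{ell-lem}.

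The hard part will be the bookkeeping across the several charts, and in the parabolic case the additional step of restricting to the smooth locus $\Sigma(R) \setminus \{\text{singular points}\}$ and checking the structure equations separately on $\Sigma_0$ and $\Sigma_2$ (with the coframe on $\Sigma_2$ constructed from the chart $P_{\pi_1\pi_2}$ as in Proposition \ref{par-derived}). One must also verify that the pulled-back forms $p^*\varpi_i$ and $p^*\theta_j$ continue to satisfy $d(\cdot) \equiv 0$ modulo themselves together with the new $\theta$'s on $\Sigma(R)$; this is immediate since $p^*$ preserves the original congruences and the newly introduced $\varpi_{\pi_j}$ may be freely adjoined to the modulus without affecting the right-hand side.
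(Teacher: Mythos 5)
Your overall strategy is the one the paper itself uses: its proof of Proposition \ref{tower} consists precisely of rerunning the chart-by-chart Grassmann coordinate computations of Theorems \ref{hyp-pro}, \ref{par-topology}, \ref{ell-topology} and the coframe/structure-equation computations behind Propositions \ref{hyp-symbol1}, \ref{hyp-symbol2}, \ref{par-symbol}, \ref{ell-symbol}, with $(\theta_1,\theta_2)$ in place of $(\varpi_1,\varpi_2)$ and the extra forms $\varpi_i$ contributing nothing to the integral-element conditions, exactly as you observe. For parts (i) and (iii) your outline is sound.

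In the parabolic case, however, there is a concrete error. The pinch point of the fiber (equivalently, the singular point of $\Sigma(R)$) is \emph{not} the point of $P_{\pi_1\pi_2}$ at which all inhomogeneous coordinates vanish: that point is the $\Sigma_2$ point and is a \emph{smooth} point of $\Sigma(R)$, since in the chart $U_{\pi_1\pi_2}$ the defining functions are the analogues of $f_1=p^6_{22}$, $f_2=p^6_{21}-p^6_{12}$ of case (VI) in the proof of Lemma \ref{par-pro}, whose differentials are everywhere independent. The singular point lies in the chart $U_{\omega_1\pi_2}$ (the analogue of $U_{\omega_1\pi_{22}}$, case (III) of Lemma \ref{par-pro}), where $f_1=p^3_{11}p^3_{22}-p^3_{12}p^3_{21}$, $f_2=p^3_{11}+p^3_{22}$ degenerate exactly at the origin, i.e.\ at the $2$-plane annihilated by $\omega_2$ and $\pi_1$; this point belongs to $\Sigma_1$, not to $\Sigma_2$, and it does not even lie in $U_{\pi_1\pi_2}$. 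Consequently the smooth locus is $\Sigma_0\cup(\Sigma_1\setminus\{\text{singular points}\})\cup\Sigma_2$, so your plan of checking the structure equations only on $\Sigma_0$ and $\Sigma_2$ omits the nonempty stratum $\Sigma_1\setminus\{\text{singular points}\}$, which must be treated on $P_{\pi_1\pi_2}$ at $\{p^6_{11}\neq 0,\ p^6_{12}=0\}$ (this is where Proposition \ref{par-derived} rescales $\varpi_0$ by $p^6_{11}$); and if one excised the point you named rather than the true pinch point, the fiber would not be $S^1\times\mathbb{R}$, so part (ii) as you argue it would fail. A smaller point: the final verification that $d(p^{*}\theta_j)\equiv 0$ modulo the defining forms of $\hat D$ is not obtained merely by enlarging the modulus, since the original right-hand side (e.g.\ $\omega_1\wedge\pi_1$) is nonzero; one must substitute $\pi_1\equiv p^1_{11}\omega_1 \bmod \varpi_{\pi_1}$ (or the analogous relation on the other charts) to see that it lies in the new ideal.
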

\begin{proof}
These statements are obtained by the same arguments of the proof of Theorem \ref{hyp-pro}, Proposition \ref{hyp-symbol1}, \ref{hyp-symbol2} for the hyperbolic case, Theorem \ref{par-topology}, Proposition \ref{par-symbol} 
for the parabolic case and Theorem \ref{ell-topology}, Proposition \ref{ell-symbol} for the elliptic case. 
\end{proof} 

For the locally hyperbolic, locally parabolic or locally elliptic type distribution $(R,D)$, we can define $k$-th rank 2 prolongation $(\Sigma^{k}(R),\hat D^{k})$ of $(R,D)$ by the above Proposition, successively. For hyperbolic 
and elliptic type $(R,D)$, we define
$$
(\Sigma^{k}(R),\hat D^{k}):=(\Sigma(\Sigma^{k-1}(R)),\hat{\hat {D}}^{k-1})
\qquad (k=1,2,\ldots ),
$$
where $(\Sigma^{0}(R),\hat D^{0}):=(R,D)$. For parabolic type $(R,D)$, we define 
$$
(\Sigma^{k}(R),\hat D^{k}):=(\Sigma(\Sigma^{k-1}(R))\backslash \left\{singular\ points \right\},\hat{\hat {D}}^{k-1}) \qquad (k=1,2,\ldots )
$$
where $(\Sigma^{0}(R),\hat D^{0}):=(R,D)$. 

\begin{theorem}\label{towerthm}
If $(R,D)$ is locally hyperbolic, locally parabolic or locally elliptic then the $k$-th rank $2$ prolongation $(\Sigma^k(R), \hat D^{k})$ of $(R,D)$ is also hyperbolic, parabolic or elliptic at any point in $\Sigma^k(R)$, respectively.
\end{theorem}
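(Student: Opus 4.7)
The plan is to prove this by induction on $k$, with the base case $k=1$ being precisely Proposition \ref{tower}. Since the definitions of hyperbolic, parabolic, and elliptic type are local structural conditions stated at the level of rank $4$ distributions with $\mathrm{rank}\,\partial D=6$ and $Ch(D)=\{0\}$ (cf.\ Proposition \ref{rank4}), and since the construction of $\Sigma^k$ is iterative, this inductive framing is the natural one. The single substantive tool needed is the fact that the rank $2$ prolongation of a rank $4$ distribution of type $T\in\{\text{hyp, par, ell}\}$ is again of type $T$, which is the content of Proposition \ref{tower}.

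For the inductive step, suppose $(\Sigma^{k-1}(R),\hat D^{k-1})$ is locally of the fixed type $T$ at every point. In the hyperbolic and elliptic cases, this is a smooth rank $4$ distribution on a $(k+5)$-dimensional manifold (where the dimension has grown by $2$ at each previous step, coming from the extra inhomogeneous Grassmann coordinates), and applying Proposition \ref{tower}(i) or (iii) directly to $(\Sigma^{k-1}(R),\hat D^{k-1})$ yields that
\[
(\Sigma^k(R),\hat D^k)=(\Sigma(\Sigma^{k-1}(R)),\hat{\hat D}^{k-1})
\]
is again locally of type $T$ at every point, with the fiber being $T^2$ or $S^2$, respectively. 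In the parabolic case, the same argument works provided one first excises the singular locus: by construction $\Sigma^{k-1}(R)$ is already smooth of parabolic type (the singular points having been removed at stage $k-1$), so Proposition \ref{tower}(ii) applies and gives parabolic type on $\Sigma(\Sigma^{k-1}(R))\setminus\{\text{singular points}\}=\Sigma^k(R)$.

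The main obstacle, if any, is bookkeeping in the parabolic case: one must be sure that after successive removals of singular loci, the resulting space is still a reasonable manifold on which Proposition \ref{tower} can be applied. This is handled by observing that Proposition \ref{tower}(ii) only requires the base to carry the parabolic structure equation pointwise — which is automatic on the smooth locus by the inductive hypothesis — and the singular points of the new prolongation $\Sigma(\Sigma^{k-1}(R))$ form a section of the fibration (as in the proof of Lemma \ref{par-pro}), so their removal leaves a smooth $S^1\times\mathbb{R}$-bundle on which the parabolic structure equations hold.

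Thus the proof reduces to a one-line induction citing Proposition \ref{tower} at each step; no new structure equation computations are required beyond those already performed in Sections $2$ and $3$. The only genuine content beyond invoking the induction hypothesis is the observation that in the parabolic case the inductive construction $(\Sigma^k(R),\hat D^k)$ has been defined precisely so that the hypothesis of Proposition \ref{tower}(ii) is satisfied at each stage.
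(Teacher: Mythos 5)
Your proof is correct and is essentially the paper's own argument: the paper disposes of Theorem \ref{towerthm} by ``successive applications of Proposition \ref{tower},'' which is exactly the induction you describe, including the observation that the parabolic case works because singular points are excised at each stage by the very definition of $(\Sigma^{k}(R),\hat D^{k})$. Your additional bookkeeping remarks (and the minor dimension count, which should read $\dim \Sigma^{k-1}(R)=\dim R+2(k-1)$) only elaborate on what the paper leaves implicit.
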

\begin{proof}
This theorem is obtained from the successive applications of Proposition \ref{tower}.
\end{proof}

\begin{remark}\label{EDS}
For the hyperbolic case, Bryant, Griffiths and Hsu proved the above theorem 
for the exterior differential systems in \cite{BGH}. By our argument, for parabolic and elliptic cases, one can show that Theorem \ref{towerthm} have the similar extension for the exterior differential system (\cite{S2}).
\end{remark}

\end{document}